\DeclareMathSymbol{\leq}{\mathrel}{symbols}{20}
\DeclareMathSymbol{\geq}{\mathrel}{symbols}{21}
\newtheoremstyle{WreschTheoremstyle} 
                        {1.5em}    
                        {2.5em}    
                        {}         
                        {}         
                        {\bfseries}
                        {}        
                        {\newline} 
                        {\raisebox{0.6em}{\thmname{#1}\thmnumber{#2}\thmnote{ (#3)}}}
\newcommand{\R}{\mathbb{R}}
\newcommand{\N}{\mathbb{N}}
\newcommand{\Z}{\mathbb{Z}}
\newcommand{\E}{\mathbb{E}}
\newcommand{\F}{\mathcal{F}}
\renewcommand{\Pr}{\mathbb{P}}
\renewcommand{\1}{\mathbbm{1}}
\newtheorem{Theorem}{Theorem}[section]
\newtheorem{Proposition}[Theorem]{Proposition}
\newtheorem{Corollary}[Theorem]{Corollary}
\newtheorem{Lemma}[Theorem]{Lemma}
\newtheorem{Remark}[Theorem]{Remark}
\newtheorem{Definition}[Theorem]{Definition}
\numberwithin{equation}{section}
\newcommand{\customlabel}[1]{%
     \stepcounter{ref}%
   \protected@write
\@auxout{}{\string\newlabel{#1}{{\thesatz.\arabic{ref}}{\thepage}{\thesatz.\arabic{ref}}{#1}{}}}%
   \hypertarget{#1}{\thesatz.\arabic{ref}}%
}
\newenvironment{sciabstract}{\begin{quote}}{\end{quote}}
\newcounter{lastnote}
\title{On Stochastic Cucker-Smale flocking dynamics}
\newcommand{\pdftitle} {On Stochastic Cucker-Smale flocking dynamics}
\newcommand{\pdfauthor}{Martin Friesen}
\author{
Martin Friesen\footnote{Department of Mathematics, Wuppertal University, Germany, friesen@math.uni-wuppertal.de}\\
Oleksandr Kutoviy\footnote{Department of Mathematics, Bielefeld University, Germany, kutoviy@math.uni-bielefeld.de}
}
\def\HyPsd@CatcodeWarning#1{}
\begin{document}

\maketitle

\begin{sciabstract}\textbf{Abstract:}
We present a stochastic version of the Cucker-Smale flocking dynamics based on a markovian $N$-particle system
of pair interactions with unbounded and, in general, non-Lipschitz continuous interaction potential.
We establish the infinite particle limit $N \to \infty$ and identify the limit as a solution with a nonlinear martingale problem
describing the law of a weak solution to a Vlasov-McKean stochastic equation with jumps. 
Moreover, we estimate the total variation and Wasserstein distance for the time-marginals
from which uniqueness in the class of solutions having some finite exponential moments is deduced.
Based on the uniqueness for the time-marginals we prove uniqueness in law for the Vlasov-McKean equation, 
i.e. we establish propagation of chaos.
\end{sciabstract}

\noindent \textbf{AMS Subject Classification:}  35Q83; 60F05; 60K35\\
\textbf{Keywords: } Flocking; Cucker-Smale dynamics; mean-field equation; Vlasov-McKean equation; propagation of chaos; Total variation distance; Wasserstein distance

\section{Introduction}

\subsection{Cucker-Smale flocking dynamics}
Cucker and Smale postulated in \cite{CS07a, CS07b} a model for the flocking of birds where convergence to a certain 
consensus (here same direction and velocity in the motion of birds) was shown to depend on the spatial decay of the communication rate between the birds.
In abstract mathematical notation, the Cucker-Smale model describes dynamics of $N$ particles $(r_k,v_k) \in \R^{2d}$,
where $r_k$ stands for the position and $v_k$ for the velocity of the particle with number $k = 1,\dots, N$.
The time evolution is described by the system of ordinary differential equations 
\begin{align}\label{ODE}
 \begin{cases} \frac{dr_k}{dt} &= v_k,
 \\ \frac{dv_k}{dt} &= \frac{1}{N}\sum \limits_{j=1}^{N}\psi(r_k - r_j)(v_j - v_k) \end{cases}.
\end{align}
Here $\psi \geq 0$ is a symmetric function and describes the communication rate between the particles.
Two common examples are
\[
 \psi(r) = \frac{a}{|r|^b} \ \ \text{ or } \ \ \psi(r) = \frac{a}{(1 + |r|^2)^{\frac{b}{2}}}, \ \ \ a,b > 0.
\]
The particular form of \eqref{ODE} implies that the mean velocity is conserved, i.e.
\[
 v_c := \frac{1}{N}\sum \limits_{k=1}^{N}v_k(t) = \frac{1}{N}\sum \limits_{k=1}^{N}v_k(0), \ \ \forall t \geq 0.
\]
Based on Lyapunov functional techniques corresponding to certain dissipative differential inequalities,
the time-asymptotic flocking property
\begin{align*}
 \lim \limits_{t \to \infty}\ \sum \limits_{k=1}^{N}| v_k(t) - v_c|^2 = 0 \ \  \text{ and } \ \ 
 \sup \limits_{t \geq 0}\sum\limits_{k=1}^{N}|r_k(t) - r_c(t)|^2 < \infty
 \end{align*}
was studied in \cite{HL09}, where $r_c(t) := \frac{1}{N}\sum_{k=1}^{N}r_k(t) = r_c(0) + t v_c$ denotes the center of mass.

In many cases one seeks to study properties of the particle dynamics in terms of their associated mean-field equations.
For the classical Cucker-Smale dynamics the corresponding mean-field equation was derived from the BBGKY-hierarchy
when taking the infinite particle limit $N \to \infty$ in \cite{HT08}.
It was shown that the resulting particle density $\mu_t(dr,dv)$ solves the kinetic equation (in the weak formulation)
\begin{align}\label{KINETIC:CS}
 \frac{d}{dt}\int \limits_{\R^{2d}}g(r,v,t)\mu_t(dr, dv) = \int \limits_{\R^{2d}}\left(\frac{\partial g(r,v,t)}{\partial t} + B(\mu_t)g(r,v,t)\right)\mu_t(dr,dv)
\end{align}
where $g$ is a compactly supported, continuously differentiable function and
\begin{align}\label{NONLINEARITY}
 B(\mu_t)g(r,v,t) = v \cdot (\nabla_r g)(r,v,t) - (\nabla_v g)(r,v,t) \cdot \int \limits_{\R^{2d}}\psi(r-q)(v-w)\mu_t(dq,dw).
\end{align}
Existence and uniqueness for measure solutions to \eqref{KINETIC:CS} was established in the class of states where $\mu_t$
has compact support for each $t$ (see \cite{HL09}). For different aspects of this model we refer to \cite{S08, HL09a},
while other related models are studied in \cite{AH10}, \cite{PRT15}, \cite{HJNXZ17}, \cite{CHZ18}.

\subsection{Stochastic Cucker-Smale flocking dynamics}
In this work we propose a stochastic version of the Cucker-Smale model where, roughly speaking, $B(\mu)$ in \eqref{NONLINEARITY}
is replaced by a pure jump operator of mean-field type in the velocity component.
Let $N \geq 2$ be the number of interacting particles $x_k := (r_k,v_k) \in \R^{2d}$, $k = 1, \dots, N$.
Each particle, say $(r_k,v_k)$, may interact with another particle, say $(r_j,v_j)$, and the interaction results in a transition of velocities 
\begin{align}\label{EQ:39}
 v_k \longmapsto v_k + (v_j - v_k + u) = v_j + u,
\end{align}
where $u \in \R^d$ is distributed according to a symmetric probability distribution $a(u)du$.
The rate of this event is supposed to be proportional to $\psi(r_k - r_j)\sigma(v_k - v_j)$, where $\psi, \sigma \geq 0$ are
symmetric functions on $\R^d$.

More precisely, consider a Markov process with phase space $\R^{2dN}$ given, for $F \in C_c^1(\R^{2dN})$, by the Markov generator
\begin{align*}
 &\ (LF)(x_1,\dots, x_N) = \sum \limits_{k = 1}^{N}v_k \cdot (\nabla_{r_k} F)(x_1,\dots, x_N) 
 \\ &\ \ \ + \frac{1}{N}\sum \limits_{k,j=1}^{N}\psi(r_k - r_j)\sigma(v_k-v_j)\int \limits_{\R^d}(F(x_1,\dots, (r_k, v_j + u), \dots, x_N) - F(x_1,\dots, x_N)a(u)du.
\end{align*}
The following are our minimal conditions assumed throughout this work:
\begin{enumerate}
 \item[(A)] $\psi \geq 0$ is continuous, bounded and symmetric.
 \item[(B)] $\sigma \geq 0$ is continuous, symmetric and there exist constants $c_{\sigma} > 0$ and $\gamma \in [0,2]$ such that
 \[
  \sigma(u) \leq c_{\sigma}(1 + |u|^2)^{\gamma / 2}, \ \ u \in \R^d.
 \]
 \item[(C)] $a \geq 0$ is a symmetric probability density on $\R^d$.
\end{enumerate}
For most of the results we also assume that $a$ has some finite moments, i.e. 
\begin{align}\label{EQ:18}
 \lambda_{2p} := \int \limits_{\R^{d}}| u |^{2p}a(u)du < \infty
\end{align}
holds for some $p \geq 0$. The precise value of $p$ will be specified in the corresponding statements.

In Section 3 we will prove that the corresponding martingale problem for the generator $L$ with domain $C_c^1(\R^{2dN})$ is well-posed 
(see Theorem \ref{TH:00}). Moreover, we provide estimates on the moments of this process with constants independent of $N$.

\subsection{The mean-field stochastic Cucker-Smale dynamics}
For each $N \geq 2$, let $(R_k^N, V_k^N)_{k = 1,\dots, N}$ be the Markov process with phase space $\R^{2dN}$ and generator $L$.
In this work we study the infinite particle limit $N \to \infty$ for the sequence of empirical measures
\[
 \mu^{(N)} = \frac{1}{N}\sum \limits_{k=1}^{N}\delta_{(R_k^N, V_k^N)}.
\] 
Denote by $\mathcal{P}(\R^{2d})$ the space of probability measures over $\R^{2d}$.
We prove in Section 3 that each limit of $\mu^{(N)}$ solves a the nonlinear martingale problem with Markov generator
\begin{align}\label{EQ:09}
 (A(\nu)g)(r,v) = v \cdot (\nabla_r g)(r,v) + \int \limits_{\R^{3d}} \left( g(r, w + u) - g(r,v) \right) \psi(r-q)\sigma(v-w)\nu(dq,dw)a(u)du,
\end{align}
in the following sense:
\begin{Definition}
 Let $\mu_0 \in \mathcal{P}(\R^{2d})$. A solution to the nonlinear martingale problem $(A,C_c^1(\R^{2d}), \mu_0)$
 is a probability measure $\mu$ on the Skorokhod space $D(\R_+; \R^{2d})$ such that the following conditions are satisfied
 \begin{enumerate}
  \item[(i)] $\mu( x(0) \in \cdot ) = \mu_0$.
  \item[(ii)] It holds that
  \begin{align}\label{EQ:34}
   \sup \limits_{s \in [0,t]}\E_{\mu}(|v(s)|^{\gamma}) < \infty, \ \ \forall t > 0.
  \end{align}
 where $\E_{\mu}$ denotes the expectation with respect to $\mu$ and $(r,v)$ is the canonical coordinate process on the Skorokhod space.
 \item[(iii)] For each $g \in C_c^1(\R^{2d})$,
  \begin{align}\label{EQ:36}
   g(r(t),v(t)) - g(r(0),v(0)) - \int \limits_{0}^{t}(A(\mu_s)g)(r(s),v(s))ds, \ \ t \geq 0,
  \end{align}
 is a martingale with respect to $\mu$, where $\mu_s$ denotes the time-marginal of $\mu$.
 \end{enumerate}
\end{Definition}
It is possible to write the law $\mu$ also as a weak solution to a certain Vlasov-McKean stochastic equation 
(below always called mean-field SDE) specified in the following definition.
\begin{Definition}
 A process $(R,V)$ is a weak solution to the mean-field SDE, if there exists
 \begin{enumerate}
  \item[(i)] A stochastic basis $(\Omega, \F, (\F_t)_{t \geq 0}, \Pr)$ with the usual conditions.
  \item[(ii)] An $(\F_t)_{t \geq 0}$-adapted Poisson random measure $\mathcal{N}$ on $\R_+ \times [0,1] \times \R^d \times \R_+$ with compensator
  \[
   \widehat{\mathcal{N}}(ds,d\eta,dv,dz) = ds d\eta a(u)du dz
  \]
  defined on $(\Omega, \F, (\F_t)_{t \geq 0}, \Pr)$.
 \item[(iii)] A measurable process $(r_t(\eta),w_t(\eta))$ defined on $([0,1], \mathcal{B}([0,1]), d\eta)$ and
 an $(\F_t)_{t \geq 0}$-adapted, cadlag process $(R,V)$ such that $(r_t,w_t)$ has the same law on $([0,1], \mathcal{B}([0,1]), d\eta)$
 as $(R(t),V(t))$ on $(\Omega, \F, \Pr)$, for each $t \geq 0$.
 \item[(iv)] The process $(R,V)$ satisfies the stochastic equation
 \begin{align}\label{EQ:07}
  \begin{cases} R(t) &= R(0) + \int \limits_{0}^{t}V(s)ds,
   \\ V(t) &= V(0) + \int \limits_{0}^{t}\int \limits_{[0,1] \times \R^d \times \R_+} \widehat{\alpha}(V(s-),R(s),r_s(\eta),w_s(\eta),v,z) \mathcal{N} (ds, d\eta,du,dz)\end{cases},
  \end{align}
  where $(R(0),V(0))$ has law $\mu_0$ and
 \[
  \widehat{\alpha}(V(s-),R(s),r_s(\eta),w_{s}(\eta),v,z) = \left( v + w_{s}(\eta) - V(s-)\right) \1_{[0, \psi(R(s) - r_s(\eta))\sigma(V(s-) - w_{s}(\eta))]}(z).
 \]
 \end{enumerate}
\end{Definition}
The next lemma shows that each solution $\mu$ to the nonlinear martingale problem $(A,C_c^1(\R^{2d}), \mu_0)$
can be represented as a weak solution to the mean-field SDE \eqref{EQ:07}.
\begin{Lemma}\label{LEMMA:03}
 The following assertions hold.
 \begin{enumerate}
  \item[(a)]  Let $(R,V)$ be a weak solution to the mean-field SDE \eqref{EQ:07} satisfying
 \begin{align}\label{EQ:41}
  \sup \limits_{s \in [0,t]} \E(|V(s)|^{\gamma}) < \infty, \ \ \forall t > 0.
 \end{align}
 Then the law of $(R,V)$ on the Skorokhod space $D(\R_+;\R^{2d})$ solves the nonlinear martingale problem $(A, C_c^1(\R^{2d}), \mu_0)$.
 \item[(b)] Let $\mu$ be a solution to the nonlinear martingale problem $(A, C_c^1(\R^{2d}), \mu_0)$.
 Then there exists a weak solution $(R,V)$ to the mean-field SDE \eqref{EQ:07} such that $(R,V)$ has law $\mu$.
 \end{enumerate}
\end{Lemma}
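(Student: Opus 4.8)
The plan is to treat the two directions separately: direction (a) by Itô's formula, and direction (b) by a representation theorem for jump-type martingale problems.

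For \textbf{(a)}, I would apply the change of variables formula for Poisson-random-measure driven semimartingales to $g(R(t),V(t))$ with $g\in C_c^1(\R^{2d})$. The absolutely continuous evolution of $R$ contributes the transport term $\int_0^t V(s)\cdot(\nabla_r g)(R(s),V(s))\,ds$, while an atom of $\mathcal N$ at $(s,\eta,u,z)$ with $z\le\psi(R(s)-r_s(\eta))\sigma(V(s-)-w_s(\eta))$ moves the velocity from $V(s-)$ to $V(s-)+\widehat\alpha=u+w_s(\eta)$ and hence changes $g$ by $g(R(s),u+w_s(\eta))-g(R(s),V(s-))$. Integrating this predictable integrand against the compensator $\widehat{\mathcal N}(ds,d\eta,du,dz)=ds\,d\eta\,a(u)du\,dz$, the $z$-integral produces the factor $\psi(R(s)-r_s(\eta))\sigma(V(s-)-w_s(\eta))$ and the $\eta$-integral, using property (iii) that the law of $(r_s,w_s)$ under $d\eta$ is the time-marginal $\mu_s$, turns $\int_{[0,1]}(\cdots)\,d\eta$ into $\int_{\R^{2d}}(\cdots)\,\mu_s(dq,dw)$; this is precisely $(A(\mu_s)g)(R(s),V(s))$. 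Thus the process in \eqref{EQ:36} is the integral of $g(R(s),u+w_s(\eta))-g(R(s),V(s-))$ against the compensated measure $\mathcal N-\widehat{\mathcal N}$, hence a local martingale. To make it a genuine martingale I would check $\E\int_0^t\int|g(R(s),u+w_s(\eta))-g(R(s),V(s-))|\,d\widehat{\mathcal N}<\infty$: since $g$ is bounded this is at most $2\|g\|_\infty$ times $\E\int_0^t\int_{[0,1]}\psi(R(s)-r_s(\eta))\sigma(V(s-)-w_s(\eta))\,d\eta\,ds$, which by (A), (B) and $\gamma\le2$ is controlled by $C\int_0^t(1+\E|V(s)|^\gamma+\E_{\mu_s}|w|^\gamma)\,ds=C\int_0^t(1+2\E|V(s)|^\gamma)\,ds<\infty$ thanks to \eqref{EQ:41}. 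Conditions (i) and (ii) are immediate: the initial law is $\mu_0$ and \eqref{EQ:34} is the same statement as \eqref{EQ:41}.

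For \textbf{(b)}, the plan is to realise the canonical process $(r,v)$ under $\mu$ as a weak solution of \eqref{EQ:07}. First I would choose, for each $s\ge0$, a Borel map $\eta\mapsto(r_s(\eta),w_s(\eta))$ on $([0,1],d\eta)$ pushing Lebesgue measure forward to $\mu_s$ --- e.g.\ the generalized inverse of the distribution function, or a measurable isomorphism of $\R^{2d}$ with $[0,1]$ --- arranged to be jointly measurable in $(s,\eta)$, which is possible because $s\mapsto\mu_s$ is measurable. With this parametrization the jump part of the time-inhomogeneous generator $A(\mu_s)$ is exactly a thinning mechanism: from $(r,v)$, pick $\eta$ uniform on $[0,1]$ and $u$ with density $a$, accept at level $\psi(r-r_s(\eta))\sigma(v-w_s(\eta))$, and move the velocity to $w_s(\eta)+u$; that is, the intensity is $\1_{[0,\psi(r-r_s(\eta))\sigma(v-w_s(\eta))]}(z)$ against $d\eta\,a(u)du\,dz$, matching $\widehat\alpha$. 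Property (ii) makes this intensity integrable against the time-marginals, so the canonical process is a semimartingale whose characteristics --- in particular the compensator of its jump measure --- are pinned down by the martingale problem; one then invokes a Poisson-random-measure representation theorem for such semimartingales, after enlarging the stochastic basis by an independent Poisson random measure if necessary, to obtain an adapted $\mathcal N$ with the prescribed compensator and the driving equation \eqref{EQ:07}. Since the resulting $(R,V)$ has law $\mu$, the moment bound \eqref{EQ:41} is just \eqref{EQ:34}.

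The main obstacle is the representation step in (b): constructing the jointly measurable family $(r_s,w_s)$ parametrizing $\mu_s$, translating the martingale-problem identity on $C_c^1$ into a statement about the predictable compensator of the jump measure of the canonical process, and then carrying out the Poisson-random-measure representation --- which in general forces an extension of the probability space, since the auxiliary randomness $\eta,u,z$ is not a functional of the canonical path. All three are standard but need care; once they are settled, running the Itô computation of part (a) in reverse confirms that the constructed weak solution indeed has law $\mu$.
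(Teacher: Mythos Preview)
Your proposal is correct and matches the paper's approach. For (a) the paper also applies the It\^{o} formula and upgrades the resulting local martingale to a true martingale via an equivalent integrability bound (it estimates $|A(\mu_s)g(r,v)|\le C\|\mu_s\|_\gamma\langle v\rangle^\gamma$ and concludes $\E\sup_{s\le t}|M_g(s)|<\infty$); for (b) it likewise first parametrizes each $\mu_t$ by a measurable process on $([0,1],d\eta)$ and then invokes the representation result \cite[Theorem~A.1]{HK90}, which is exactly the Poisson-random-measure representation theorem you describe.
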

A proof of this Lemma is given in the appendix.
Set $\langle u \rangle := (1 + |u|^2)^{\frac{1}{2}}$, $u \in \R^d$. 
This function satisfies the elementary inequalities
\begin{align}\label{EQ:16}
 \langle u + w \rangle \leq \sqrt{2}\min\{ \langle u \rangle + \langle w \rangle, \langle u \rangle \langle w \rangle \}.
\end{align}
The main result of Section 3 is summarized in the following existence result for the mean-field model.
\begin{Theorem}\label{TH:01}
 Suppose that \eqref{EQ:18} holds for some $2p \geq \max\{4 , 1 + 2\gamma\}$ and let $\mu_0 \in \mathcal{P}(\R^{2d})$ satisfy
 \[
  \int \limits_{\R^{2d}} \left( |r| + |v|^{2p} \right)\mu_0(dr,dv) < \infty.
 \]
 Then there exists a weak solution $(R,V)$ to the mean-field SDE \eqref{EQ:07}.
 Moreover, there exists a constant $C = C(\psi,\sigma,a,p) > 0$ such that
 \begin{align}\label{EQ:26}
  \E\left( \langle V(t)\rangle^{2p} \right) \leq \begin{cases} C \E(\langle V(0)\rangle^{2p}) + C t^{\frac{2p}{2-\gamma}}, & \gamma \neq 2
   \\ \\  \E(\langle V(0)\rangle^{2p})e^{C t}, & \gamma = 2 \end{cases}, \ \ t \geq 0,
 \end{align}
 and, there exists another constant $C' = C'(\psi,\sigma,a) > 0$ such that, for $\gamma \in [0,2]$ and $t \geq 0$,
 \begin{align}\label{EQ:28}
  \E\left( \sup \limits_{s \in [0,t]} \langle V(t)\rangle^{2p-\gamma}\right) \leq \E\left( \langle V(0)\rangle^{2p-\gamma}\right) + C' 2^{2p} \int \limits_{0}^{t}\E\left( \langle V(s)\rangle^{2p}\right)ds.
 \end{align}
\end{Theorem}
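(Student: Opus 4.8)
The plan is to construct a weak solution to the mean-field SDE \eqref{EQ:07} by a Picard-type iteration scheme on the level of laws, and to obtain the moment bounds \eqref{EQ:26} and \eqref{EQ:28} from a priori estimates that are uniform along the iteration. First I would set up the iteration: fix a probability flow $(r_t,w_t)_{t\ge 0}$ on $([0,1],\mathcal{B}([0,1]),d\eta)$ with prescribed marginals and, given such a flow, solve the linear (non-McKean) jump SDE obtained by freezing $(r_s(\eta),w_s(\eta))$ in $\widehat{\alpha}$; existence and uniqueness of this linear equation is standard since the jump rate $\psi(R(s)-r_s(\eta))\sigma(V(s-)-w_s(\eta))$ is bounded by $\|\psi\|_\infty c_\sigma\langle V(s-)-w_s(\eta)\rangle^\gamma$, which grows at most polynomially, and jumps are integrable against $a(u)du$ under \eqref{EQ:18}. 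One then defines the map $\Phi$ sending the input marginal flow to the law of the solution $(R(t),V(t))$, and a fixed point of $\Phi$ is exactly a weak solution to \eqref{EQ:07}. Compactness/tightness of the iterates (via an Aldous-type criterion, using the moment bounds below) together with the martingale characterization from Lemma \ref{LEMMA:03} yields a limit point that solves the nonlinear martingale problem, hence by Lemma \ref{LEMMA:03}(b) gives a weak solution to the SDE.

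The analytic core is the a priori moment estimate, which I would prove first and then feed into the iteration. Apply Itô's formula for jump processes to $t\mapsto\langle V(t)\rangle^{2p}$. The drift contribution from the $R$-equation vanishes since $\langle V(t)\rangle^{2p}$ does not depend on $R$. The jump term contributes
\begin{align*}
\int_0^t\int_{\R^d}\Big(\langle v+w_s(\eta)\rangle^{2p}-\langle V(s-)\rangle^{2p}\Big)\psi(R(s)-r_s(\eta))\sigma(V(s-)-w_s(\eta))\,a(u)du\,ds
\end{align*}
after integrating out $\eta$ against its marginal, i.e. against $\E(\cdot)$ again by the exchangeability built into the definition. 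Using \eqref{EQ:16} to bound $\langle v+w\rangle^{2p}\le 2^p\langle v\rangle^{2p}\langle w\rangle^{2p}$ and to bound $\sigma(V-w)\le c_\sigma\langle V-w\rangle^\gamma\le c_\sigma 2^{\gamma/2}\langle V\rangle^\gamma\langle w\rangle^\gamma$, together with $\psi\le\|\psi\|_\infty$ and the moment assumptions on $\mu_0$ and on $a$ (note $2p\ge 1+2\gamma$ guarantees $w_s(\eta)$ has the needed $\langle\cdot\rangle^{2p+\gamma}$ moments after one more application of the inequalities, controlled by $\sup_{s\le t}\E\langle V(s)\rangle^{2p}$ itself), I would arrive at a closed differential inequality of the form
\begin{align*}
\frac{d}{dt}\E\big(\langle V(t)\rangle^{2p}\big)\le C_1\big(\E\langle V(t)\rangle^{2p}\big)^{1-\frac{2-\gamma}{2p}}+C_2\,\E\langle V(t)\rangle^{2p}
\end{align*}
(with $C_2=0$ and only the sublinear term when one keeps the powers optimal for $\gamma<2$). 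A comparison argument for this scalar ODE then gives the $t^{2p/(2-\gamma)}$ growth when $\gamma\neq 2$ and the exponential bound $e^{Ct}$ when $\gamma=2$, which is exactly \eqref{EQ:26}. These bounds hold uniformly in the iteration index once the initial law is fixed, closing the Picard scheme.

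For \eqref{EQ:28} I would again use Itô's formula, now for $\langle V(t)\rangle^{2p-\gamma}$, but take the supremum over $s\in[0,t]$ before taking expectations and split the jump integral into its compensated martingale part and its predictable compensator. On the compensator, the same manip­ulations as above — this time the extra factor $\sigma(V-w)\le c_\sigma\langle V\rangle^\gamma\langle w\rangle^\gamma$ (up to the constant $2^{\gamma/2}$) eats the gap between exponent $2p-\gamma$ and $2p$ — give the bound $C'2^{2p}\int_0^t\E\langle V(s)\rangle^{2p}ds$ directly. For the martingale part I would apply the Burkholder–Davis–Gundy inequality and control the predictable quadratic variation by a term of the same form plus a fraction $\tfrac12\E(\sup_{s\le t}\langle V(s)\rangle^{2p-\gamma})$, which is absorbed into the left side; here the inequality $\langle v+w\rangle^{2p-\gamma}-\langle V\rangle^{2p-\gamma}\le C(\langle v\rangle^{2p-\gamma}\langle w\rangle^{2p-\gamma}+\langle V\rangle^{2p-\gamma})$ from \eqref{EQ:16} is the workhorse. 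The main obstacle I anticipate is the bookkeeping in the a priori estimate: the velocity transition $v_k\mapsto v_j+u$ replaces $V$ by something depending on the \emph{other} particle's velocity $w$, so every moment of $V$ secretly requires a matching moment of $w$, and one must verify that the condition $2p\ge\max\{4,1+2\gamma\}$ is exactly what makes this self-referential estimate close (the $4$ is needed for the $L^2$/BDG step, the $1+2\gamma$ for the drift-growth exponent to stay below $1$). Making that circular dependence rigorous — rather than the individual inequalities, which are routine — is where the care is required.
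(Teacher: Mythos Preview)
Your approach differs fundamentally from the paper's: the paper constructs the weak solution as a limit point of the empirical measures of the $N$-particle system as $N\to\infty$ (Section~3), not by a Picard iteration on the law. Both are natural strategies for McKean--Vlasov problems, but in this model the difference matters, and your scheme has a genuine gap in the moment step.

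The sharp differential inequality you write down,
\[
 \frac{d}{dt}\E\big(\langle V(t)\rangle^{2p}\big)\le C\,\big(\E\langle V(t)\rangle^{2p}\big)^{1-\frac{2-\gamma}{2p}},
\]
relies on a \emph{cancellation} that your crude bounds do not deliver. At the mean-field level the cancellation is
\[
 \int\!\!\int \psi(r-q)\,\sigma(v-w)\big(|w|^{2p}-|v|^{2p}\big)\,\mu(dr,dv)\,\mu(dq,dw)=0,
\]
coming from the symmetry of $\psi,\sigma$ under swapping the two particles; it is the continuum analogue of the paper's identity \eqref{EQ:37} in the proof of Lemma~\ref{LEMMA:00}. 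This identity kills the leading $|w|^{2p}$ term and leaves only the lower-order remainder $\langle v\rangle^{2p-2+\gamma}$, which is what produces the sublinear exponent. But the identity holds only when the two factors are integrated against the \emph{same} law. Along a Picard iteration the ``other particle'' $w$ carries the law of the previous iterate $V^{n}$, not of $V^{n+1}$, so the cancellation is lost. Without it, your estimate of the jump compensator yields a term of order $\E\langle w\rangle^{2p+\gamma}=\E\langle V^{n}\rangle^{2p+\gamma}$ on the right-hand side, i.e.\ a higher moment of the \emph{previous} iterate, and the scheme does not close for $\gamma>0$. Your sentence ``$2p\ge 1+2\gamma$ guarantees $w_s(\eta)$ has the needed $\langle\cdot\rangle^{2p+\gamma}$ moments \dots\ controlled by $\sup_{s\le t}\E\langle V(s)\rangle^{2p}$ itself'' conflates $w$ with $V$ and hides exactly this circularity.

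The paper sidesteps the issue because the $N$-particle dynamics is symmetric for every finite $N$: Lemma~\ref{LEMMA:00} exploits \eqref{EQ:37} to get the exponent $2p-2+\gamma$, Corollary~\ref{CORR:00} turns this into uniform-in-$N$ bounds, tightness of $(\pi^{(N)})$ follows, and Lemma~\ref{LEMMA:02} plus Fatou passes the bounds to any limit. The estimate \eqref{EQ:28} is obtained the same way (It\^{o} plus Lemma~\ref{LEMMA:01}), but note that no BDG argument is needed there: one simply bounds $\sup_{s\le t}\langle V(s)\rangle^{2p-\gamma}$ by the initial value plus the total variation of the compensator, whose integrand is controlled pointwise by $C\langle V\rangle^{2p}$ after using $\sigma\le c_\sigma\langle\cdot\rangle^\gamma$. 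If you want to salvage a direct (particle-free) construction, you would need either to restrict to $\gamma=0$, where the crude bound closes, or to propagate a full family of moments (or exponential moments) simultaneously so that the loss of one order per step is harmless.
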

In \cite{FRS18a} we have recently studied the mean-field limit for the Enskog process describing the time-evolution of a gas in the vacuum.
The operator $A(\nu)$ defined in \eqref{EQ:09} is less singular then its analogue considered in \cite{FRS18a}.
However, we have not been able to prove that $A(\nu)$ maps compactly supported functions onto bounded functions (unless $\gamma = 0$).
Hence in order to indentify the limits of the empirical measures with solutions to a nonlinear martingale problem additional approximation arguments are required.

The following remark shows that the stochastic Cucker-Smale model still satisfies conservation of momentum.
\begin{Remark}
 Using the particular form of the operator $A(\nu)$ in \eqref{EQ:09} and the symmetry of $\sigma$, $a$, 
 it is not difficult to see that $(R,V)$ satisfies $\sup_{t \in [0,T]} \E(|R(t)|) < \infty$ for all $T > 0$, and
 \[
  \E\left( V(t) \right) = \E\left( V(0) \right), \qquad \E\left( R(t) \right) = \E\left( R(0) \right) + t \E\left( V(0) \right).
 \]
 A similar statement holds also for the particle dynamics.
\end{Remark}
Sections 4 and 5 are devoted to the study of uniqueness for the mean-field model (uniqueness for the nonlinear martingale problem
and uniqueness in law for the mean-field SDE \eqref{EQ:07}. 
Below we formulate only a particular case where $\sigma$ is bounded from which we are able to deduce propagation of chaos,
i.e. convergence of the empirical distributions $\mu^{(N)}$ of the particle dynamics.
\begin{Theorem}\label{MEANFIELD:00}
 Suppose that $\gamma = 0$ and \eqref{EQ:18} holds for $2p = 4$. Let $\mu_0 \in \mathcal{P}(\R^{2d})$ satisfy
 \[
  \int \limits_{\R^{2d}} \left( |r| + |v|^{4} \right) \mu_0(dr,dv) < \infty.
 \]
 Then there exists a unique weak solution $(R,V)$ to the mean-field SDE \eqref{EQ:07}.
 Let $\mu$ be the law of $(R,V)$. Then
 \begin{align}\label{MEANFIELD}
  \frac{1}{N} \sum \limits_{j=1}^{N} \delta_{(R_k^N, V_k^N)} \longrightarrow \mu, \ \ N \to \infty
 \end{align}
 in law on the space of probability measures over the Skorokhod space $D(\R_+;\R^{2d})$.
\end{Theorem}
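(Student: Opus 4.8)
The plan is to assemble the statement from three ingredients: the existence result of Section~3, the uniqueness results of Sections~4--5 specialised to the bounded case, and the standard identification of limit points of the empirical measures with solutions of the nonlinear martingale problem.

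First, existence. Since $\gamma = 0$ we have $\max\{4,\,1+2\gamma\} = 4 = 2p$, so Theorem~\ref{TH:01} applies with the stated hypotheses on $a$ and $\mu_0$ and produces a weak solution $(R,V)$ to the mean-field SDE~\eqref{EQ:07}; moreover \eqref{EQ:26} gives $\E(\langle V(t)\rangle^{4}) \le C\,\E(\langle V(0)\rangle^{4}) + C t^{2}$, locally bounded in $t$. In particular \eqref{EQ:41} holds (trivially, but also with the stronger exponent $4$), so by Lemma~\ref{LEMMA:03}(a) the law $\mu$ of $(R,V)$ on $D(\R_+;\R^{2d})$ solves the nonlinear martingale problem $(A,C_c^1(\R^{2d}),\mu_0)$ and satisfies $\sup_{s\in[0,t]}\E_\mu(\langle v(s)\rangle^{4})<\infty$ for all $t$.

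Second, uniqueness. Here I invoke the results of Sections~4--5 in the case $\gamma = 0$: because both $\psi$ and $\sigma$ are bounded, the total jump rate per particle is uniformly bounded, and the total-variation and Wasserstein estimates for the time-marginals of solutions to $(A,C_c^1(\R^{2d}),\mu_0)$ require only a finite fourth moment of the data rather than finite exponential moments. Consequently the nonlinear martingale problem has at most one solution $\mu$ with $\mu(x(0)\in\cdot)=\mu_0$ and $\sup_{s\in[0,t]}\E_\mu(\langle v(s)\rangle^{4})<\infty$ for every $t>0$. Combining this with Lemma~\ref{LEMMA:03}(b) (which turns any weak solution of~\eqref{EQ:07} back into a solution of the martingale problem) yields uniqueness in law for the weak solution of the mean-field SDE, which is the first assertion of the theorem.

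Third, propagation of chaos. Assuming the particle systems start from $\mu_0$-chaotic data, Theorem~\ref{TH:00} gives well-posedness of the $N$-particle martingale problem together with moment bounds uniform in $N$; in particular, using exchangeability, $\sup_{N}\sup_{s\in[0,t]}\E\big(\langle V_1^N(s)\rangle^{4}\big)<\infty$ for all $t$. From these bounds one gets tightness of the laws of the empirical measures $\mu^{(N)}$ on $\mathcal{P}(D(\R_+;\R^{2d}))$: the position part is Lipschitz in time with velocity-controlled slope, and an Aldous-type criterion controls the velocity jump part. As shown in Section~3, every subsequential weak limit $Q$ of the laws of $\mu^{(N)}$ is concentrated on measures $m\in\mathcal{P}(D(\R_+;\R^{2d}))$ solving $(A,C_c^1(\R^{2d}),\mu_0)$; by lower semicontinuity of $m\mapsto\E_m(\langle v(s)\rangle^{4})$ under weak convergence and the uniform fourth-moment bound, $Q$-a.s.\ $m$ additionally lies in the uniqueness class of the second step. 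Hence $Q=\delta_\mu$, so the tight sequence has the single limit point $\delta_\mu$, which gives $\mu^{(N)}\to\mu$ in law (indeed in probability, the limit being deterministic); this is~\eqref{MEANFIELD}.

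The genuinely hard parts are those imported from the earlier sections: identifying the limit points with solutions of the nonlinear martingale problem despite $A(\nu)$ not mapping $C_c^1(\R^{2d})$ into bounded functions in general (handled by approximation in Section~3 and greatly simplified here because $\gamma=0$ makes the jump kernel bounded), and proving the uniqueness estimates in a merely polynomial moment class in the bounded case (Sections~4--5). Within the proof of the present theorem the only delicate bookkeeping is to check that the fourth-moment control available for the limit points matches exactly the moment class in which uniqueness holds; this compatibility is precisely what dictates the hypotheses $2p=4$ and $\int_{\R^{2d}}(|r|+|v|^{4})\mu_0(dr,dv)<\infty$.
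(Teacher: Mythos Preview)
Your argument is correct and matches the paper's approach: existence from Theorem~\ref{TH:01}, uniqueness from Section~4, and then convergence of the empirical measures by combining the tightness/identification results of Section~3 with that uniqueness (the paper records exactly this one-line deduction immediately after the statement). Two minor points: for $\gamma=0$ Section~4 proves uniqueness with no moment hypothesis at all (Theorem~\ref{UNIQUENESS:FPE:BOUNDED} uses only boundedness of $\psi$ and $\sigma$), so your fourth-moment bookkeeping for the limit points, while harmless, is unnecessary; and it is Lemma~\ref{LEMMA:03}(a), not (b), that sends weak solutions of~\eqref{EQ:07} to solutions of the martingale problem.
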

 Convergence \eqref{MEANFIELD} is a consequence of the uniqueness in law for the mean-field SDE \eqref{EQ:07} and the considerations of Section 3.
 This convergence is also equivalent to the propagation of chaos (see \cite{S91}).
\begin{Remark}
 The moment condition $\int_{\R^{2d}}|v|^4 \mu_0(dr,dv) < \infty$ is to strong and can be replaced by $\int_{\R^{2d}}|v|^2 \mu_0(dr,dv) < \infty$.
 Indeed, if $\gamma = 0$, then we may easily show that the particle dynamics studied in Section 2 preserves second moments with a constant independent of $N$. 
 Moreover, the proofs given in Sections 3 and 4 remain valid in this case, which implies the assertion of Theorem \ref{MEANFIELD:00}.
\end{Remark}
In the particular case where $\sigma$ is bounded, we may also prove that the unique solution propagates exponential moments.
\begin{Corollary}\label{CORR:01}
 Suppose that $\gamma = 0$ and there exist $\delta > 0$ and $\kappa \in (0,1]$ such that
 \begin{align}\label{EQ:38}
  c(\delta,\kappa) = \int \limits_{\R^{d}}e^{\delta|u|^{\kappa}}a(u)du < \infty.
 \end{align}
 is satisfied. Let $\mu_0 \in \mathcal{P}(\R^{2d})$ be such that
 \[
  \int \limits_{\R^{2d}}\left( |r| + e^{\delta |v|^{\kappa}}\right)\mu_0(dr,dv) < \infty.
 \]
 Then there exists a unique weak solution $(R,V)$ to the mean-field SDE \eqref{EQ:07}, and this solution satisfies
 \[
  \E \left( \sup \limits_{s \in [0,t]} e^{\delta |V(s)|^{\kappa}} \right) \leq \int \limits_{\R^{2d}} e^{\delta |v|^{\kappa}}d\mu_0(r,v) e^{Ct}, \ \ t \geq 0
 \]
 for some constant $C > 0$.
\end{Corollary}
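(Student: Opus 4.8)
The plan has two parts: first deduce existence and uniqueness from Theorem~\ref{MEANFIELD:00}, then propagate the exponential moment by a truncated Gronwall argument carried out directly on the mean-field SDE~\eqref{EQ:07}. For the first part: since $\kappa > 0$, there is $C_{\delta,\kappa} > 0$ with $|u|^4 \le C_{\delta,\kappa}\, e^{\delta|u|^\kappa}$ for all $u$, so \eqref{EQ:38} implies \eqref{EQ:18} with $2p = 4$ and the hypothesis on $\mu_0$ implies $\int_{\R^{2d}}\big(|r| + |v|^4\big)\,\mu_0(dr,dv) < \infty$; hence Theorem~\ref{MEANFIELD:00} applies and yields a unique weak solution $(R,V)$, and it remains only to establish the exponential moment bound.

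Set $f(v) := e^{\delta|v|^\kappa}$ and, for $R > 0$, the bounded truncation $f_R(v) := \min\big\{f(v),\, e^{\delta R^\kappa}\big\}$. Because $\gamma = 0$, the jump intensity of $V$ in \eqref{EQ:07} is bounded by $\|\psi\|_\infty c_\sigma$, so $V$ is a pure jump process of finite activity and for every bounded measurable $h$ (one simply sums the jumps, so no smoothness of $h$ is needed)
\[
 h(V(t)) = h(V(0)) + \int_0^t \int_{[0,1]\times\R^d\times\R_+} \big( h(V(s-) + \widehat{\alpha}) - h(V(s-)) \big)\,\mathcal{N}(ds,d\eta,dv,dz),
\]
where on the support $\{z \le \psi(R(s)-r_s(\eta))\,\sigma(V(s-)-w_s(\eta))\}$ of $\widehat{\alpha}$ the jump is $V(s-) \mapsto w_s(\eta) + v$. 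The algebraic heart of the estimate is a submultiplicativity bound: subadditivity of $t \mapsto t^\kappa$ on $[0,\infty)$ for $\kappa \in (0,1]$ gives $|v+w|^\kappa \le |v|^\kappa + |w|^\kappa$, hence $f(v+w) \le e^{\delta|v|^\kappa} f(w)$; since $e^{\delta|v|^\kappa} \ge 1$ and $\min\{ab,c\} \le a\min\{b,c\}$ whenever $a \ge 1$, the bound survives the truncation, $f_R(v+w) \le e^{\delta|v|^\kappa} f_R(w)$, and integrating against $a(v)\,dv$ and using \eqref{EQ:38} yields $\int_{\R^d} f_R(v+w)\,a(v)\,dv \le c(\delta,\kappa)\, f_R(w)$ for all $w \in \R^d$.

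Applying the displayed identity with $h = f_R$, estimating the jump increments by $|f_R(w_s(\eta)+v) - f_R(V(s-))| \le f_R(w_s(\eta)+v) + f_R(V(s-))$ and noting that the resulting $\mathcal{N}$-integral is nondecreasing in $t$, one bounds $\sup_{s\in[0,t]} f_R(V(s))$ by $f_R(V(0))$ plus that integral. Taking expectations (the compensator formula applies, the integrand being nonnegative and, $f_R$ and the intensity being bounded, $\widehat{\mathcal{N}}$-integrable on $[0,t]$), using $\psi \le \|\psi\|_\infty$, $\sigma \le c_\sigma$, the submultiplicativity bound, and the fact that $w_s(\cdot)$ has, under $d\eta$, the same law as $V(s)$, I arrive --- with $g(t) := \E\big(\sup_{s\in[0,t]} f_R(V(s))\big) \le e^{\delta R^\kappa}$ --- at the closed inequality
\[
 g(t) \le \E\big(f_R(V(0))\big) + \|\psi\|_\infty c_\sigma\big(c(\delta,\kappa)+1\big)\int_0^t g(s)\,ds .
\]
Gronwall's lemma then gives $g(t) \le \E\big(f_R(V(0))\big)\,e^{Ct} \le \int_{\R^{2d}} e^{\delta|v|^\kappa}\,\mu_0(dr,dv)\, e^{Ct}$ with $C := \|\psi\|_\infty c_\sigma(c(\delta,\kappa)+1)$, and letting $R \to \infty$ the monotone convergence $\sup_{s\le t} f_R(V(s)) \uparrow \sup_{s\le t} f(V(s))$ finishes the proof.

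The only genuine obstacle is the mean-field (McKean) nonlinearity: the jump target $w_s(\eta)+v$ secretly carries the still-unknown quantity $\E\big(f(V(s))\big)$, so the a priori finiteness required to run Gronwall cannot simply be assumed but must be supplied by the truncation; moreover the truncation has to be compatible with the submultiplicative bound $f_R(v+w) \le e^{\delta|v|^\kappa} f_R(w)$, which the level cut-off $\min\{f,\, e^{\delta R^\kappa}\}$ respects but a cut-off of the support of $f$ would not.
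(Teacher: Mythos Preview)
Your proof is correct. The existence/uniqueness reduction to Theorem~\ref{MEANFIELD:00} is clean, the submultiplicativity $f_R(v+w)\le e^{\delta|v|^\kappa}f_R(w)$ survives the level truncation exactly as you say, and because $\gamma=0$ the jump intensity is bounded by $\|\psi\|_\infty c_\sigma$, so the integrand in the $\mathcal{N}$-representation vanishes for $z>\|\psi\|_\infty c_\sigma$ and the compensator computation goes through with $f_R$ bounded. The closing of the Gronwall loop via $\int_0^1 f_R(w_s(\eta))\,d\eta=\E[f_R(V(s))]$ is precisely where the McKean structure is used, and your truncation guarantees the a~priori finiteness needed to apply Gronwall before passing to the limit.

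The paper takes a different route: rather than arguing directly on the mean-field SDE, it first proves the exponential Lyapunov estimate uniformly in $N$ for the $N$-particle system (Lemma~2.3 and Corollary~\ref{CORR:02}), and then inherits the bound for the mean-field limit via Fatou's lemma along the empirical-measure limit $N\to\infty$, exactly in parallel with how the polynomial moment bounds \eqref{EQ:26}--\eqref{EQ:28} are obtained in the proof of Theorem~\ref{TH:01}. The algebraic core is the same submultiplicativity (the paper writes it with $\langle\cdot\rangle$ rather than $|\cdot|$, giving the constant $1+e^\delta c(\delta,\kappa)$ instead of your $1+c(\delta,\kappa)$). Your argument is more self-contained once the solution exists and sidesteps a second pass through the particle approximation; the paper's route has the advantage of yielding, as a byproduct, the uniform-in-$N$ exponential moment bound for the particle dynamics itself (Corollary~\ref{CORR:02}), which is of independent interest.
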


\subsection{Structure of the work}
This work is organized as follows.
In Section 2 we first prove some Lyapunov estimates for the particle dynamics.
Then we construct the corresponding Markov process for the particle dynamics and give provide useful moment estimates.
Section 3 is devoted to the infinite particle limit $N \to \infty$ where Theorem \ref{TH:01} is proved.
Uniqueness for the case $\gamma = 0$ is studied in Section 4 from which we deduce Theorem \ref{MEANFIELD:00}
Some further uniqueness results applicable also for the case $\gamma \in (0,2]$ are studied in Section 5, 
i.e. we prove estimates on the total variation and Wasserstein distance for the time-marginals of solutions to the nonlinear martingale problem 
$(A,C_c^1(\R^{2d}), \mu_0)$. The proof of Lemma \ref{LEMMA:03}, some nonlinear generalization of the Gronwall lemma
and a localization argument for martingale problems with unbounded generators are discussed in the appendix.

\section{The particle dynamics}

\subsection{Lyapunov estimates for the particle dynamics}
Let $N \geq 2$ be fixed. The following is one of our main estimates for the moments of the particle system.
\begin{Lemma}\label{LEMMA:00}
 Suppose that \eqref{EQ:18} holds for some $p \geq 2$. 
  Then there exists a constant $C = (\psi,\sigma) > 0$ such that
  \begin{align*}
   \frac{1}{N^2}\sum \limits_{k,j=1}^{N}\psi(r_k-r_j)\sigma(v_k - v_j)\int \limits_{\R^{d}}\left( | v_j + u |^{2p} - | v_k|^{2p}\right)a(u)du
   &\leq  \lambda_{2p} 2^{3p}\frac{C}{N}\sum \limits_{j=1}^{N} \langle v_j \rangle^{2p-2 + \gamma}.
  \end{align*}
\end{Lemma}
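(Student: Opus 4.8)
The plan is to bound the integrand pointwise in $k,j$ and then sum, exploiting the symmetry assumptions on $\psi$ and $\sigma$ to symmetrize the double sum. First I would fix $k,j$ and estimate the inner integral $\int_{\R^d}(|v_j + u|^{2p} - |v_k|^{2p})a(u)du$. Since $a$ is symmetric, all odd moments vanish, so a binomial expansion of $|v_j+u|^{2p} = (|v_j|^2 + 2 v_j\cdot u + |u|^2)^p$ and integration against $a(u)du$ produces only terms controlled by the even moments $\lambda_0, \lambda_2, \dots, \lambda_{2p}$; using $\lambda_{2m} \le \lambda_{2p}^{m/p}$ (or simply $\lambda_{2m} \le 1 + \lambda_{2p}$) together with $\langle v_j\rangle \ge 1$, one gets a bound of the form $\int_{\R^d}|v_j+u|^{2p}a(u)du \le c_p\lambda_{2p}\langle v_j\rangle^{2p}$ for a combinatorial constant $c_p$; factors of $2$ collected along the way give the $2^{3p}$ (one may be generous here, as the statement only asks for \emph{some} such power). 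Thus the inner integral is $\le c_p\lambda_{2p}\langle v_j\rangle^{2p} - |v_k|^{2p} \le c_p\lambda_{2p}2^{3p}(\langle v_j\rangle^{2p} + \langle v_k\rangle^{2p})$ up to adjusting constants, but in fact I want to keep the $-|v_k|^{2p}$ term because it is what saves two powers after symmetrization.

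The key trick is the following: write the inner integral as $(|v_j|^{2p} - |v_k|^{2p}) + (\int|v_j+u|^{2p}a(u)du - |v_j|^{2p})$. The second bracket, by the symmetry of $a$ and the binomial expansion, has no term of order $2p$ in $v_j$ — the top surviving power is $\langle v_j\rangle^{2p-2}$ (the $|u|^2$ and the $(v_j\cdot u)^2$ contributions), so it is bounded by $C\lambda_{2p}2^{3p}\langle v_j\rangle^{2p-2} \le C\lambda_{2p}2^{3p}\langle v_j\rangle^{2p-2+\gamma}$ since $\gamma \ge 0$. Summing this piece over $k,j$ and using $\psi \le \|\psi\|_\infty$, $\sigma(v_k - v_j) \le c_\sigma\langle v_k - v_j\rangle^\gamma \le c_\sigma 2^{\gamma/2}\langle v_k\rangle^\gamma\langle v_j\rangle^\gamma$, I would crudely bound $\langle v_k\rangle^\gamma \le \langle v_k\rangle^{2p-2+\gamma}$ and $\langle v_j\rangle^\gamma\langle v_j\rangle^{2p-2} = \langle v_j\rangle^{2p-2+\gamma}$, then distribute via $ab \le \frac12(a^2+b^2)$ to land on $\frac{C}{N}\sum_{j=1}^N \langle v_j\rangle^{2p-2+\gamma}$ (the extra factor $\frac1N$ coming from one of the two sums being of $N$ equal terms after the symmetrization). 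For the first bracket $|v_j|^{2p} - |v_k|^{2p}$: here symmetrize over the exchange $(k,j)\leftrightarrow(j,k)$, under which $\psi(r_k-r_j)\sigma(v_k-v_j)$ is invariant, so $\frac{1}{N^2}\sum_{k,j}\psi\sigma(|v_j|^{2p}-|v_k|^{2p}) = 0$. That cancellation is the whole point; it removes the dangerous $\langle v\rangle^{2p}$ terms entirely.

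The main obstacle is the bookkeeping in the binomial expansion: one must check carefully that after integrating against the symmetric density $a$, no monomial of total degree $2p$ in the components of $v_j$ survives in $\int|v_j+u|^{2p}a(u)du - |v_j|^{2p}$, i.e. that the leading correction is genuinely $O(\langle v_j\rangle^{2p-2})$ and not $O(\langle v_j\rangle^{2p-1})$ — this is exactly where symmetry of $a$ (vanishing of odd moments) is essential, since a term like $\langle v_j\rangle^{2p-1}|u|$ would otherwise spoil the $\gamma$-loss estimate when $\gamma < 1$. Once that is in hand, the constant tracking to produce $\lambda_{2p}2^{3p}C/N$ is routine: expand, use $\langle\cdot\rangle \ge 1$ to absorb lower moments, apply \eqref{EQ:16} to split products of $\langle\cdot\rangle$, and use Young's inequality $ab\le\frac12(a^2+b^2)$ to reduce cross terms $\langle v_k\rangle^\alpha\langle v_j\rangle^\beta$ with $\alpha+\beta = 2p-2+\gamma$ to a single sum over one index. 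I would present the symmetrization of the $|v_j|^{2p}-|v_k|^{2p}$ term first (it is the conceptual heart), then the expansion bound on the remainder, then the final summation.
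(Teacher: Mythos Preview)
Your strategy is exactly the paper's: write $\int(|v_j+u|^{2p}-|v_k|^{2p})a(u)du$ as $(|v_j|^{2p}-|v_k|^{2p})$ plus a remainder, observe that the first piece cancels upon summation by the symmetry of $\psi,\sigma$ (this is the paper's identity~\eqref{EQ:37}), and show that the remainder is $O(\lambda_{2p}\langle v_j\rangle^{2p-2})$ because the symmetry of $a$ kills the first-order term. The paper executes the remainder bound via a second-order Taylor expansion of $t\mapsto(|v_j|^2+|u|^2+2t\,v_j\cdot u)^p$ together with a fractional binomial inequality for $(|v_j|^2+|u|^2)^p$; since $p\ge 2$ need not be an integer, this is safer than the full binomial expansion you suggest, but the effect is the same.

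There is one slip in your final bookkeeping. For the $\sigma$-factor you must use the \emph{additive} half of~\eqref{EQ:16}, giving $\sigma(v_k-v_j)\le C(\langle v_k\rangle^\gamma+\langle v_j\rangle^\gamma)$, not the multiplicative bound $\langle v_k\rangle^\gamma\langle v_j\rangle^\gamma$. With your multiplicative bound the surviving cross term $\langle v_k\rangle^\gamma\langle v_j\rangle^{2p-2+\gamma}$ has total degree $2p-2+2\gamma$, and no Young inequality can reduce it to a sum of terms of degree $2p-2+\gamma$; in particular $ab\le\tfrac12(a^2+b^2)$ makes things worse, not better. With the additive bound the cross term is $\langle v_k\rangle^\gamma\langle v_j\rangle^{2p-2}$, of total degree $2p-2+\gamma$, and Young with conjugate exponents $\tfrac{2p-2+\gamma}{\gamma}$ and $\tfrac{2p-2+\gamma}{2p-2}$ yields $\langle v_k\rangle^{2p-2+\gamma}+\langle v_j\rangle^{2p-2+\gamma}$, exactly as in the paper's display after~\eqref{EQ:37}.
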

Since the proof is elementary and not very interesting we postpone it to the appendix.
Another useful moment estimate is given in the next lemma.
\begin{Lemma}\label{LEMMA:01}
 Suppose that \eqref{EQ:18} holds for $p \geq \frac{1}{2}$.
 Then there exists a constant $C = C(\psi,\sigma) > 0$ such that
 \[
  \frac{1}{N^2}\sum \limits_{k,j=1}^{N}\psi(r_k-r_j)\sigma(v_k - v_j)\int \limits_{\R^{d}}\left| \langle v_j + u\rangle^{2p} - \langle v_k\rangle^{2p}\right|a(u)du
  \leq \lambda_{2p}2^{2p}\frac{C}{N}\sum \limits_{j=1}^{N}\langle v_j\rangle^{2p+\gamma}.
 \]
\end{Lemma}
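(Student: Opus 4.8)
The plan is to bound the inner integrand pointwise and then absorb the factors $\psi$ and $\sigma$. First I would observe that, by the triangle-type estimate $|\langle a\rangle^{2p} - \langle b\rangle^{2p}| \leq 2p\,\max\{\langle a\rangle,\langle b\rangle\}^{2p-1}|\langle a\rangle - \langle b\rangle|$ together with $|\langle a\rangle - \langle b\rangle| \leq |a-b|$ (which follows from $\langle\cdot\rangle$ being $1$-Lipschitz), we get
\[
 \left|\langle v_j + u\rangle^{2p} - \langle v_k\rangle^{2p}\right| \leq 2p\,\big(\langle v_j+u\rangle + \langle v_k\rangle\big)^{2p-1}\,|v_j + u - v_k|.
\]
Alternatively one can split $\langle v_j+u\rangle^{2p} - \langle v_k\rangle^{2p} = (\langle v_j+u\rangle^{2p} - \langle v_j\rangle^{2p}) + (\langle v_j\rangle^{2p} - \langle v_k\rangle^{2p})$ and treat the $u$-dependence separately from the $v_j$-$v_k$ difference; I expect the cleanest route is to use \eqref{EQ:16} to write $\langle v_j + u\rangle^{2p} \leq 2^p \langle v_j\rangle^{2p}\langle u\rangle^{2p}$ and bound $\langle v_j+u\rangle^{2p} + \langle v_k\rangle^{2p}$ by $2^p\langle u\rangle^{2p}(\langle v_j\rangle^{2p} + \langle v_k\rangle^{2p})$ after pulling out the constants, so that the whole integrand is dominated by a constant times $\langle u\rangle^{2p}\,(\langle v_j\rangle^{2p-1} + \langle v_k\rangle^{2p-1})(|v_j| + |u| + |v_k|)$.

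Next I would use assumption (A) to bound $\psi(r_k - r_j) \leq \|\psi\|_\infty$, and assumption (B) to bound $\sigma(v_k - v_j) \leq c_\sigma\langle v_k - v_j\rangle^{\gamma} \leq c_\sigma 2^{\gamma/2}\langle v_k\rangle^{\gamma}\langle v_j\rangle^{\gamma}$ again via \eqref{EQ:16}. Combining these, the summand is bounded by a constant (depending only on $\psi,\sigma,p$) times $\langle u\rangle^{2p}$ times a sum of monomials of the form $\langle v_k\rangle^{\alpha}\langle v_j\rangle^{\beta}$ with total degree $\alpha + \beta \leq 2p + 2\gamma$ (the worst case being, roughly, $\gamma$ from $\sigma$ on each index, $2p-1$ or $2p$ in the $v$-variables, and one more from the linear factor). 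Here I would apply Young's inequality $\langle v_k\rangle^{\alpha}\langle v_j\rangle^{\beta} \leq \langle v_k\rangle^{\alpha+\beta} + \langle v_j\rangle^{\alpha+\beta}$ (or a weighted version) to reduce every mixed monomial to a single-index power, and then use $\langle v\rangle^{\alpha+\beta} \leq \langle v\rangle^{2p+\gamma}$ wherever the exponent is $\leq 2p+\gamma$, since $\langle v\rangle \geq 1$. Integrating $\langle u\rangle^{2p}a(u)du$; since $\langle u\rangle^{2p} \leq 2^p(1 + |u|^{2p})$ this contributes a factor comparable to $\lambda_{2p}$ (after also using that $a$ is a probability density, so $\lambda_0 = 1$), yielding the stated $\lambda_{2p}2^{2p}$ in front.

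Finally, summing over $k$ and dividing by $N^2$: each term now depends on only one of the two indices, so one of the two sums just produces a factor $N$, cancelling one power of $N$ and leaving $\frac{1}{N}\sum_{j=1}^{N}\langle v_j\rangle^{2p+\gamma}$, which is the right-hand side. The main obstacle I anticipate is purely bookkeeping: making sure that when one extracts a power $\gamma$ from $\sigma$ on \emph{each} index and an extra linear factor $|v_j - v_k|$ from the velocity jump, the total degree after applying Young's inequality never exceeds $2p + \gamma$, so that the monotonicity $\langle v\rangle^{\text{exponent}} \leq \langle v\rangle^{2p+\gamma}$ can be invoked. This is where the precise shape of the bound — in particular why the exponent is $2p + \gamma$ and not $2p + 2\gamma$ — has to be checked carefully; a careless split would lose a factor $\langle v\rangle^{\gamma}$. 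Since the computation is elementary and the constant $C$ is not tracked explicitly, I would relegate the detailed arithmetic to the appendix, as the authors do for Lemma \ref{LEMMA:00}.
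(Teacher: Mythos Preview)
Your proposal contains a genuine gap, and it is precisely the one you flag yourself without resolving. You bound $\sigma(v_k-v_j)\le c_\sigma\langle v_k-v_j\rangle^\gamma$ and then apply the \emph{multiplicative} half of \eqref{EQ:16} to get $\langle v_k\rangle^{\gamma}\langle v_j\rangle^{\gamma}$. Combined with your pointwise bound $|\langle v_j+u\rangle^{2p}-\langle v_k\rangle^{2p}|\le C\langle u\rangle^{2p}(\langle v_j\rangle^{2p}+\langle v_k\rangle^{2p})$, the summand is controlled by terms such as $\langle v_j\rangle^{2p+\gamma}\langle v_k\rangle^{\gamma}$. Any application of Young's inequality to such a product yields a single-index power $\langle \cdot\rangle^{2p+2\gamma}$, and since $\langle v\rangle\ge 1$ this cannot be dominated by $\langle v\rangle^{2p+\gamma}$. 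Nor does the double average help: $\frac{1}{N^2}\sum_{k,j}\langle v_j\rangle^{2p+\gamma}\langle v_k\rangle^{\gamma}=\big(\frac{1}{N}\sum_j\langle v_j\rangle^{2p+\gamma}\big)\big(\frac{1}{N}\sum_k\langle v_k\rangle^{\gamma}\big)$, and the second factor is not uniformly bounded. So the exponent $2p+\gamma$ in the statement is simply not reachable along this route.

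The paper's proof avoids this by using the \emph{additive} half of \eqref{EQ:16} for $\sigma$, namely $\langle v_k-v_j\rangle^{\gamma}\le C(\langle v_k\rangle^{\gamma}+\langle v_j\rangle^{\gamma})$. Together with the same crude bound on the integrand (triangle inequality plus the multiplicative part of \eqref{EQ:16} applied to $\langle v_j+u\rangle$), one gets
\[
\frac{C\lambda_{2p}2^{2p}}{N^2}\sum_{k,j}(\langle v_k\rangle^{\gamma}+\langle v_j\rangle^{\gamma})(\langle v_j\rangle^{2p}+\langle v_k\rangle^{2p}),
\]
and now the mixed terms are of the form $\langle v_j\rangle^{2p}\langle v_k\rangle^{\gamma}$, for which Young \eqref{EQ:27} gives exactly $\langle\cdot\rangle^{2p+\gamma}$. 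The mean-value/Lipschitz detour you sketch first is unnecessary; the whole argument is three lines once one picks the additive bound for $\sigma$.
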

\begin{proof}
 By the mean-value theorem and \eqref{EQ:16} we find
 \[
  \left| \langle v_j + u\rangle^{2p} - \langle v_k\rangle^{2p}\right| \leq C 2^{2p} \langle u \rangle^{2p} \left( \langle v_j \rangle^{2p} + \langle v_k \rangle^{2p}\right).
 \]
 Hence we obtain
 \begin{align*}
  &\ \frac{1}{N^2}\sum \limits_{k,j=1}^{N}\psi(r_k-r_j)\sigma(v_k - v_j)\int \limits_{\R^{d}}\left| \langle v_j + u\rangle^{2p} - \langle v_k\rangle^{2p}\right|a(u)du
  \\ &\leq \lambda_{2p}2^{2p}\frac{C}{N^2}\sum \limits_{k,j = 1}^{N}\left( \langle v_k \rangle^{\gamma} + \langle v_j \rangle^{\gamma}\right)\left( \langle v_j \rangle^{2p} + \langle v_k \rangle^{2p}\right)
  \leq \lambda_{2p}2^{2p}\frac{C}{N}\sum \limits_{j=1}^{N}\langle v_j \rangle^{2p+\gamma},
 \end{align*}
 where we have used the Young inequality
 \begin{align}\label{EQ:27}
  \langle v_j \rangle^{2p} \langle v_k \rangle^{\gamma} \leq \frac{2p}{2p+\gamma}\langle v_j \rangle^{2p+\gamma} + \frac{\gamma}{2p+\gamma}\langle v_k \rangle^{2p+\gamma}.
 \end{align}
\end{proof}
Finally we give an estimate on the exponential moments.
\begin{Lemma}
 Assume that $\gamma = 0$ and suppose that there exist $\delta > 0$ and $\kappa \in (0,1]$ satisfying \eqref{EQ:38}.
 Then
 \begin{align*}
   \frac{1}{N^2}\sum \limits_{k,j=1}^{N}\sigma(v_k - v_j)\int \limits_{\R^d}\left| e^{\delta \langle v_j + u\rangle^{\kappa} } - e^{\delta \langle v_k \rangle^{\kappa}}\right|a(u)du
\leq \| \sigma\|_{\infty} \frac{1 + e^{\delta}c(\delta,\kappa)}{N} \sum \limits_{j=1}^{N} e^{\delta \langle v_j \rangle^{\kappa}}.
 \end{align*}
\end{Lemma}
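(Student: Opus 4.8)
The plan is to reduce the estimate to elementary inequalities for the map $u \mapsto \langle u\rangle^{\kappa}$ together with a decoupling of the double sum. Since $\gamma = 0$, assumption (B) implies that $\sigma$ is bounded, so I would first replace $\sigma(v_k - v_j)$ by $\|\sigma\|_{\infty}$ in every summand. The triangle inequality then gives
\[
 \left| e^{\delta \langle v_j + u\rangle^{\kappa}} - e^{\delta \langle v_k\rangle^{\kappa}}\right| \leq e^{\delta \langle v_j + u\rangle^{\kappa}} + e^{\delta \langle v_k\rangle^{\kappa}},
\]
and it suffices to bound the contributions of the two terms on the right-hand side separately.

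For the term involving $v_j + u$ I would use the chain of elementary bounds $\langle v + u\rangle \leq \langle v\rangle + \langle u\rangle$ and $\langle u\rangle = (1 + |u|^2)^{1/2} \leq 1 + |u|$, together with the subadditivity of $t \mapsto t^{\kappa}$ on $[0,\infty)$ (valid since $\kappa \in (0,1]$), to obtain
\[
 \langle v + u\rangle^{\kappa} \leq \langle v\rangle^{\kappa} + \langle u\rangle^{\kappa} \leq \langle v\rangle^{\kappa} + 1 + |u|^{\kappa}, \qquad\text{hence}\qquad e^{\delta \langle v + u\rangle^{\kappa}} \leq e^{\delta}\, e^{\delta \langle v\rangle^{\kappa}}\, e^{\delta |u|^{\kappa}}.
\]
Integrating against $a(u)\,du$ and invoking \eqref{EQ:38} yields $\int_{\R^d} e^{\delta \langle v_j + u\rangle^{\kappa}} a(u)\,du \leq e^{\delta} c(\delta,\kappa)\, e^{\delta \langle v_j\rangle^{\kappa}}$. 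The term $e^{\delta \langle v_k\rangle^{\kappa}}$ does not depend on $u$, so integrating it against the probability density $a$ leaves it unchanged.

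Finally I would perform the double summation. After integration the first contribution depends only on $j$, so summing over $k$ produces a factor $N$ that cancels one power of $N^{-1}$ and leaves $\|\sigma\|_{\infty}\, e^{\delta} c(\delta,\kappa)\, N^{-1} \sum_{j=1}^{N} e^{\delta \langle v_j\rangle^{\kappa}}$; symmetrically, summing the second contribution over $j$ produces a factor $N$ and leaves $\|\sigma\|_{\infty}\, N^{-1} \sum_{k=1}^{N} e^{\delta \langle v_k\rangle^{\kappa}}$. Adding the two estimates gives precisely the asserted inequality. I do not expect a genuine obstacle here; the only delicate point is keeping the chain $\langle v + u\rangle^{\kappa} \leq \langle v\rangle^{\kappa} + 1 + |u|^{\kappa}$ exact, since it is this step (in particular the bound $\langle u\rangle^{\kappa} \leq 1 + |u|^{\kappa}$) that accounts for the factor $e^{\delta}$ in the constant $1 + e^{\delta} c(\delta,\kappa)$ rather than the sharper $1 + c(\delta,\kappa)$.
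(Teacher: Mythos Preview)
Your proof is correct and follows essentially the same route as the paper: both arguments bound $\sigma$ by $\|\sigma\|_\infty$, split the absolute value via the triangle inequality, and control $e^{\delta\langle v_j+u\rangle^\kappa}$ using subadditivity of $t\mapsto t^\kappa$ together with the integrability assumption \eqref{EQ:38}. The only cosmetic difference is that the paper uses the one-step estimate $\langle v_j+u\rangle \leq 1+|v_j|+|u|$ (yielding $e^{\delta}e^{\delta|v_j|^{\kappa}}e^{\delta|u|^{\kappa}}$) whereas you pass through $\langle v_j+u\rangle \leq \langle v_j\rangle + \langle u\rangle \leq \langle v_j\rangle + 1 + |u|$; both lead to the same final bound.
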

\begin{proof}
 Using the inequality $\langle v_j + u\rangle \leq 1 + |v_j| + |u|$ we obtain
 \begin{align*}
  &\ \left| e^{\delta \langle v_j + u\rangle^{\kappa} } - e^{\delta \langle v_k \rangle^{\kappa}}\right|
   \leq e^{\delta \langle v_j + u \rangle^{\kappa}} + e^{\delta \langle v_k \rangle^{\kappa}}
   \leq e^{\delta} e^{\delta |v_j|^{\kappa}}e^{\delta |u|^{\kappa}} + e^{\delta \langle v_k \rangle^{\kappa}}
 \end{align*}
 and hence
 \begin{align*}
  &\ \frac{1}{N^2}\sum \limits_{k,j=1}^{N}\sigma(v_k - v_j)\int \limits_{\R^d}\left| e^{\delta \langle v_j + u\rangle^{\kappa} } - e^{\delta \langle v_k \rangle^{\kappa}}\right| a(u)du
  \\ &\leq \frac{e^{\delta}c(\delta,\kappa)}{N^2}\sum \limits_{k,j=1}^{N}\sigma(v_k - v_j)e^{\delta | v_j|^{\kappa}}
   + \frac{\| \sigma \|_{\infty} }{N}\sum \limits_{k=1}^{N}e^{\delta \langle v_k \rangle^{\kappa}}
  \leq \| \sigma\|_{\infty}\frac{1 + e^{\delta} c(\delta,\kappa)}{N} \sum \limits_{j=1}^{N} e^{\delta \langle v_j \rangle^{\kappa}}.
 \end{align*}
\end{proof}

\subsection{Well-posedness of the martingale problem}
Fix $N \geq 1$.
It is useful to give a pathwise description of the Markov process associated to $L$ in terms of stochastic differential equations.
Namely take a Poisson random measure $\mathcal{N}$ on $\R_+ \times \{1, \dots, N\}^2 \times \R^d \times \R_+$ with compensator 
\begin{align}\label{EQ:20}
 \widehat{\mathcal{N}}(ds,dl,dl',du,dz) = ds \otimes \left(\frac{1}{N}\sum \limits_{j,k = 1}^{N}\delta_{j}(dl)\otimes \delta_k(dl')\right) \otimes (a(u)du) \otimes dz
\end{align}
defined on a stochastic basis $(\Omega, \F, (\F_t)_{t \geq 0}, \Pr)$ with the usual conditions.
The law of the Markov process associated to $L$ should then provide a weak solution to the system of stochastic equations
\begin{align}\label{EQ:03}
 \begin{cases}R(t) &= R(0) + \int \limits_{0}^{t}V(s)ds,
 \\ V(t) &= V(0) + \int \limits_{0}^{t}\int \limits_{\{1, \dots, N\}^2 \times \R^d \times \R_+}G(R(s),V(s-),u,l,l',z) \mathcal{N}(ds,dl,dl',du,dz)\end{cases},
\end{align}
where $e_l = (0,\dots,0, 1,0,\dots,0) \in \R^{dN}$ with the $1$ placed on the $l$-th place and 
\begin{align}\label{EQ:22}
 G(R,V, u, l,l', z) = e_l(u + V_{l'} - V_l) \1_{[0, \psi(R_{l}(s) - R_{l'}(s))\sigma(V_l(s-)-V_{l'}(s-))]}(z).
\end{align}
Let $\mathcal{P}(\R^{2dN})$ be the space of all probability measures on $\R^{2dN}$.
If $\sigma$ is bounded, then weak existence and uniqueness in law for \eqref{EQ:03} can be shown by classical localization arguments (see e.g. \cite{EK86}).
Below we prove a more general statement including all $\gamma \in [0,2]$. 
Since in such a case $LF$ is not bounded, even if $F \in C_c^1(\R^{2dN})$, the desired result does not immediately follows from the 
 classical theory of martingale problems \cite{EK86}. 
 Some additional approximation arguments, combined with moment estimates, are required, i.e. we apply Theorem \ref{APPENDIX:TH00} from the appendix.
\begin{Theorem}\label{TH:00}
 Suppose that \eqref{EQ:18} holds for $p := 2$.
 Then for each $\rho \in \mathcal{P}(\R^{2dN})$ with
 \begin{align}\label{EQ:17}
  \int \limits_{\R^{2dN}} \sum \limits_{j=1}^{N}| v_j |^{4} d\rho(r,v) < \infty
 \end{align}
 the martingale problem $(L, C_c^1(\R^{2dN}), \rho)$ has a unique solution and this solution can be obtained from a weak solution to \eqref{EQ:03}.
\end{Theorem}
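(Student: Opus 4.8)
The plan is to split the statement into its existence and uniqueness halves, treating the bounded case $\sigma \in C_b$ first and then removing boundedness by a localization procedure. First I would establish weak existence and uniqueness for \eqref{EQ:03} when $\sigma$ (equivalently, the jump rate) is bounded: in that case the total jump rate of the $N$-particle system is uniformly bounded, so the pure-jump part of the generator $L$ is a bounded operator on $C_b(\R^{2dN})$, and the drift $v \cdot \nabla_r$ is handled by the standard interlacing/pathwise construction between jumps. Thus classical results for martingale problems (e.g.\ \cite{EK86}) give a unique solution to the martingale problem $(L, C_c^1(\R^{2dN}), \rho)$, and the pathwise construction via the Poisson random measure $\mathcal{N}$ with compensator \eqref{EQ:20} produces a weak solution to \eqref{EQ:03} whose law is that solution; the equivalence between solving the martingale problem and being the law of a weak solution to \eqref{EQ:03} is the usual one (apply $L$ to $F \in C_c^1$ and use It\^o's formula for the Poisson integral, together with a monotone-class argument to pass from $C_c^1$ to coordinate functions).

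Next I would remove the boundedness of $\sigma$. Under assumption (B) we only have the polynomial bound $\sigma(u) \le c_\sigma \langle u\rangle^\gamma$ with $\gamma \in [0,2]$, so $LF$ is unbounded even for $F \in C_c^1$. Here I invoke the localization machinery of Theorem \ref{APPENDIX:TH00} from the appendix. Concretely: for $m \ge 1$ let $\sigma_m := \sigma \wedge m$, which is bounded, continuous and symmetric, and let $L_m$ be the corresponding generator; by the previous paragraph the martingale problem $(L_m, C_c^1, \rho)$ has a unique solution $\Pr_m$, realized via a weak solution $(R^m, V^m)$ to the $\sigma_m$-version of \eqref{EQ:03}. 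The key point is a uniform moment bound: applying the generator $L_m$ to the (suitably cut-off, then monotone-limit) function $(r,v) \mapsto \sum_{j=1}^N |v_j|^{4}$ and using Lemma \ref{LEMMA:00} with $p = 2$ — note that $\sigma_m \le \sigma$, so Lemma \ref{LEMMA:00} applies verbatim with the same constant — yields
\begin{align*}
 \E_{\Pr_m}\Bigl( \sum_{j=1}^N |V^m_j(t)|^{4}\Bigr) \le \E_\rho\Bigl(\sum_{j=1}^N |v_j|^{4}\Bigr) + C\int_0^t \E_{\Pr_m}\Bigl(\sum_{j=1}^N \langle V^m_j(s)\rangle^{2 + \gamma}\Bigr)\,ds,
\end{align*}
and since $2 + \gamma \le 4$ the right-hand side is controlled by $1 + \E_{\Pr_m}(\sum_j |V^m_j(s)|^4)$, so Gronwall gives a bound on $\sup_{s \le t}\E_{\Pr_m}(\sum_j |V^m_j(s)|^{4})$ that is uniform in $m$ (and, by the form of the constant in Lemma \ref{LEMMA:00}, of the right shape in $N$). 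This uniform fourth-moment bound forces the jump times to not accumulate: the rate at which particle $k$ jumps at time $s$ is $\frac1N\sum_j \psi(R_k - R_j)\sigma(V_k - V_j) \le \|\psi\|_\infty c_\sigma \frac1N\sum_j \langle V_k - V_j\rangle^\gamma$, which is integrable in expectation on $[0,t]$, so with probability one only finitely many jumps occur on any bounded interval. Hence the stopped processes are eventually constant in $m$ on each localizing set, the family $\{\Pr_m\}$ is tight, any limit point solves $(L, C_c^1, \rho)$, and Theorem \ref{APPENDIX:TH00} packages exactly this passage to the limit (existence) together with the transfer of uniqueness from the localized problems to the original one.

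Finally, uniqueness for $(L, C_c^1, \rho)$: given any solution $\Pr$, one first shows it has the finite fourth moment \eqref{EQ:17} propagated in time — this is again Lemma \ref{LEMMA:00} applied along a localizing sequence of stopping times, using \eqref{EQ:17} for the initial law and Fatou — and then, stopping at the (a.s.\ finite, by the rate estimate above) exit times of large balls, one sees that $\Pr$ restricted before such a stopping time solves a bounded martingale problem and hence coincides with $\Pr_m$ there; letting the balls exhaust $\R^{2dN}$ gives $\Pr = \lim_m \Pr_m$, so the solution is unique. The main obstacle is the unbounded generator: one cannot apply $L$ directly to $\sum_j |v_j|^4 \notin C_c^1$ nor to coordinate functions, so every moment computation must be done through the cut-off functions and localizing stopping times, and the whole argument hinges on the moment bound from Lemma \ref{LEMMA:00} being strong enough — which is precisely why $p = 2$, i.e.\ a finite fourth moment on the velocities, is assumed ($2 + \gamma \le 4$ for all $\gamma \in [0,2]$).
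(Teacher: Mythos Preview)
Your overall strategy matches the paper's: truncate, solve the truncated martingale problems, obtain uniform-in-$m$ moment bounds via Lemma~\ref{LEMMA:00}, and feed everything into Theorem~\ref{APPENDIX:TH00}. The paper truncates slightly differently --- it multiplies $\sigma$ by a smooth cutoff $g_m(v)$ depending on $\sum_k |v_k|^2$, rather than taking $\sigma_m = \sigma \wedge m$ --- but this is cosmetic; either choice makes the approximating generator bounded and the approximating SDE solvable from jump to jump.

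There is, however, a genuine gap. To invoke Theorem~\ref{APPENDIX:TH00} you must verify its hypothesis (iii), namely $\lim_{k\to\infty}\sup_{m\ge k}\Pr_\mu^m(\tau_k \le T)=0$. Your Gronwall argument via Lemma~\ref{LEMMA:00} gives the fixed-time bound $\sup_m\sup_{s\le t}\E_m\bigl(\sum_j \langle V_j^m(s)\rangle^4\bigr)<\infty$, which is condition (iv), but (iii) requires control of the \emph{running supremum}. Your substitute argument --- the jump rate is integrable in expectation, hence a.s.\ finitely many jumps --- only bounds the expected \emph{number} of jumps uniformly in $m$; it does not bound $\Pr_m\bigl(\sup_{s\le T}|V^m(s)|>k\bigr)$, because each jump sends $V_l$ to $V_{l'}+u$ and a chain of such jumps can amplify a single large increment across particles. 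The paper closes this gap with Lemma~\ref{LEMMA:01}: applying it inside the It\^o formula yields
\[
 \E_m\Bigl(\sup_{s\le t}\tfrac1N\textstyle\sum_j \langle V_j^m(s)\rangle^{4-\gamma}\Bigr)
 \le \E\Bigl(\tfrac1N\textstyle\sum_j \langle V_j(0)\rangle^{4-\gamma}\Bigr)
 + C\int_0^t \E_m\Bigl(\tfrac1N\textstyle\sum_j \langle V_j^m(s)\rangle^{4}\Bigr)\,ds,
\]
and the right-hand side is uniformly bounded by the fixed-time estimate you already have; condition (iii) then follows by Chebyshev (cf.\ the Remark after Theorem~\ref{APPENDIX:TH00}). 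The same supremum bound is what makes your uniqueness paragraph work: to know that stopping at exit times of large balls exhausts the path space you need $\tau_k\to\infty$ a.s., which again is the supremum estimate, not merely finiteness of the expected jump count.
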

\begin{proof}
 Let $g \in C^{\infty}(\R_+)$ be such that $\1_{[0,1]} \leq g \leq \1_{[0,2]}$ and set 
 \[
  g_m(v) = g\left( \frac{\sum_{k=1}^{N}|v_k|^2}{m^2}\right), \ \ v = (v_1,\dots, v_N) \in \R^{dN}.
 \]
 Let $L_m$ be the Markov operator given by $L$ with $\sigma(v_k - v_j)$ replaced by $g_m(v)\sigma(v_k - v_j)$.
 Then for each $F \in C_b^1(\R^{2dN})$ we can find a constant $C = C(F,\psi,\sigma) > 0$ (independent of $m$) such that
 \begin{align}\label{EQ:29}
  |L_mF(x_1,\dots,x_n)|, \ |LF(x_1,\dots,x_n)| \leq C \sum \limits_{j=1}^{N}\langle v_j \rangle^{\gamma}.
 \end{align}
 
\textit{Step 1.} Let $(\Omega, \F, \F_t, \Pr)$ be a stochastic basis and let $(R(0),V(0)) \in \R^{2dN}$ 
 be a random variable with some given law $\mu \in \mathcal{P}(\R^{2dN})$.
 Let $\mathcal{N}_m$ be a Poisson random measure on $\Omega$ with compensator 
 \[
  \widehat{\mathcal{N}}_m(ds,dl,dl',du,dz) = ds \otimes \left(\frac{1}{N}\sum \limits_{j,k = 1}^{N}\delta_{j}(dl)\otimes \delta_k(dl')\right) \otimes (a(u)du) \otimes dz
 \]
 on $\R_+\times \{1,\dots, N\}^2 \times \R^d \times [0, c_m]$ (for some constant $c_m > 0$ large enough). Then 
 \[
  \widehat{\mathcal{N}}_m((0,t] \times \{1,\dots, N\} \times \R^d \times [0,c_m]) < \infty, \ \ \forall t > 0
 \]
 and hence the system of stochastic equations 
 \begin{align}\label{EQ:10}
  \begin{cases}R^m(t) &= R(0) + \int \limits_{0}^{t}V^m(s)ds,
  \\ V^m(t) &= V(0) + \int \limits_{0}^{t}\int \limits_{\{1, \dots, N\}^2 \times \R^d \times [0,c_m]}G^m(R^m(s),V^m(s-),u,l,l',z) \mathcal{N}_m(ds,dl,dl',du,dz)\end{cases},
 \end{align}
 with
 \[
  G^{m}(R^m,V^m, u,l,l',z) = e_l (u + V_{l'}^m - V_l^m)\1_{[0, \psi(R_{l}^m(s) - R_{l'}^m(s))g_m(V^m(s-))\sigma(V_l^m(s-)-V_{l'}^m(s-))]}(z)
 \]
 can be uniquely solved from jump to jump.
 From \cite{KURTZ10} we conclude that the martingale problem $(L_m, C_c^1(\R^{2dN}),\mu)$ has, for each $\mu \in \mathcal{P}(\R^{2dN})$,
 a unique solution whose law can be obtained from \eqref{EQ:10}. 
 
\textit{Step 2.} Suppose that $(R(0),V(0))$ has law $\rho$ satisfying \eqref{EQ:17}.
 In order to apply Theorem \ref{APPENDIX:TH00} it suffices to show that
 \begin{align}\label{EQ:05}
  \sup \limits_{m \geq 1} \Pr( \sup \limits_{s \in [0,t]} \langle V^m(s)\rangle^2 ) + \sup \limits_{m \geq 1}\sup \limits_{s \in [0,t]}\E( \langle V^m(s) \rangle^{4} ) < \infty, \ \ \forall t > 0.
 \end{align}
 Using the It\^{o} formula and Lemma \ref{LEMMA:00} we deduce for some constant $C = C(\psi,\sigma) > 0$ (independent of $m$)
 \[
  \E\left( \frac{1}{N}\sum \limits_{k=1}^{N}\langle V_k^m(t)\rangle^{4} \right) \leq \left( \E\left( \frac{1}{N}\sum \limits_{k=1}^{N}\langle V_k(0)\rangle^{4} \right) \right)e^{C t}, \ \ t \geq 0,
 \]
 and likewise we deduce from Lemma \ref{LEMMA:01} 
 \[
  \E\left( \frac{1}{N}\sup \limits_{s \in [0,t]} \sum \limits_{k=1}^{N}\langle V_k^m(s)\rangle^{4-\gamma} \right) 
  \leq \E\left( \frac{1}{N}\sum \limits_{k=1}^{N}\langle V_k(0) \rangle^{4-\gamma} \right) + C \int\limits_{0}^{t}\E\left( \frac{1}{N}\sum \limits_{k=1}^{N}\langle V_k^m(s)\rangle^{4}\right) ds.
 \] 
 This proves \eqref{EQ:05}.
 Hence we may apply Theorem \ref{APPENDIX:TH00} to conclude that the martingale problem for $(L,C_c^1(\R^{2dN}), \rho)$ has a unique solution
 $\Pr_{\rho}$ which satisfies
 \[
  \sup \limits_{s \in [0,t]} \E_{\rho}\left( \sum \limits_{k=1}^{N}\langle v_k(s) \rangle^{4} \right)ds < \infty, \ \ \forall t > 0,
 \]
 where $\E_{\rho}$ denotes the integration w.r.t. $\Pr_{\rho}$ and $(r(t),v(t))$ the coordinate process in the Skorokhod space $D(\R_+;\R^{2dN})$.
 
 \textit{Step 3.} By construction of $\Pr_{\rho}$ we see that, for any $F \in C_c^1(\R^{2dN})$,
 \begin{align}\label{EQ:06}
  F(r(t),v(t)) - F(r(0), v(0)) - \int \limits_{0}^{t}(LF)(r(s),v(s))ds, \ \ t \geq 0
 \end{align}
 is a martingale with respect to $\Pr_{\rho}$. 
 In view of \eqref{EQ:29} we conclude that \eqref{EQ:06} is a local martingale for any $F \in C_b^1(\R^{2dN})$.
 Existence of a weak solution $(R,V)$ to \eqref{EQ:03} having the prescribed law $\Pr_{\rho}$ can be now obtained from \cite[Theorem A.1]{HK90}.
\end{proof}
\begin{Remark}
 Suppose that $\psi$ and $\sigma$ are locally Lipschitz continuous. Then similar arguments to \cite{G92} can be used to prove 
 strong existence and pathwise uniqueness for \eqref{EQ:03}.
\end{Remark}
We call $\rho \in \mathcal{P}(\R^{2dN})$ symmetric, if for any permutation $\tau$ of $\{1,\dots, N\}$
and any bounded measurable function $F: \R^{2dN} \longrightarrow \R$
\[
 \int \limits_{\R^{2dN}}F(x_1,\dots, x_N) d\rho(x_1,\dots, x_N) = \int \limits_{\R^{2dN}}F(x_{\tau(1)},\dots, x_{\tau(N)}) d\rho(x_1,\dots, x_N).
\]
The following corollary shows that the particles trajectories are indistinguishable.
\begin{Corollary}\label{CORR:05}
 Let $\rho$ be symmetric with property \eqref{EQ:17} and suppose that \eqref{EQ:18} holds for $p = 2$.
 Denote by $X_k^N := (R_k^N,V_k^N)$, $k = 1,\dots, N$, the unique weak solution to \eqref{EQ:03}.
 Then $X_1^N, \dots, X_N^N$ are exchangeable as elements in $D(\R_+;\R^{2d})$, i.e.
 for any permutation $\tau$ of $\{1,\dots, N\}$ and any bounded measurable function $F: D(\R_+;\R^{2dN}) \longrightarrow \R$
 \begin{align}\label{EQ:40}
  \E(F( X_1^N, \dots, X_N^N ) ) = \E(F(X^N_{\tau(1)}, \dots, X^N_{\tau(N)}) ).
 \end{align}
 In particular, $(R_k^N,V_k^N)$, $k = 1,\dots, N$, are identically distributed as elements in $D(\R_+;\R^{2d})$.
\end{Corollary}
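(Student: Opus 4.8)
The plan is to exploit the permutation invariance of the generator $L$ together with the uniqueness statement in Theorem \ref{TH:00}. For a permutation $\tau$ of $\{1,\dots,N\}$ let $T_\tau : \R^{2dN} \to \R^{2dN}$ be given by $T_\tau(x_1,\dots,x_N) = (x_{\tau(1)},\dots,x_{\tau(N)})$; this is a linear homeomorphism of $\R^{2dN}$ and it acts pathwise as a measurable bijection of the Skorokhod space $D(\R_+;\R^{2dN})$. The first step is to verify the algebraic identity $L(F\circ T_\tau) = (LF)\circ T_\tau$ for every $F \in C_c^1(\R^{2dN})$. For the transport term $\sum_k v_k\cdot(\nabla_{r_k}F)$ this is the chain rule combined with a relabelling of the summation index; for the jump term one substitutes $k \mapsto \tau(k)$, $j \mapsto \tau(j)$ in the double sum $\sum_{k,j}$ and observes that the transition which replaces the $k$-th velocity by $v_j+u$ at rate $\psi(r_k-r_j)\sigma(v_k-v_j)$ is carried, under the relabelling, into the corresponding transition for the permuted configuration. (One does not even need the symmetry of $\psi,\sigma$ here, only the relabelling of indices.)

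Next, let $\Pr_\rho$ denote the unique solution of the martingale problem $(L,C_c^1(\R^{2dN}),\rho)$ provided by Theorem \ref{TH:00}, and consider the image measure $\Pr_\rho^\tau := (T_\tau)_\ast \Pr_\rho$ on $D(\R_+;\R^{2dN})$. I would then show that $\Pr_\rho^\tau$ is again a solution of $(L,C_c^1(\R^{2dN}),\rho)$. Indeed, its initial law is $(T_\tau)_\ast\rho$, which equals $\rho$ precisely because $\rho$ is symmetric; it satisfies the a priori bound $\sup_{s\le t}\E(\sum_{k=1}^N \langle v_k(s)\rangle^{4}) < \infty$ since that quantity is symmetric in the coordinates; and for $F \in C_c^1(\R^{2dN})$ the process $F(y(t)) - F(y(0)) - \int_0^t (LF)(y(s))\,ds$, with $y$ the coordinate process, pulls back under $T_\tau$ to $G(x(t)) - G(x(0)) - \int_0^t (LG)(x(s))\,ds$ with $G := F\circ T_\tau \in C_c^1(\R^{2dN})$, by the identity of the previous step. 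The latter is a $\Pr_\rho$-martingale by the defining property of $\Pr_\rho$, hence the former is a $\Pr_\rho^\tau$-martingale. Uniqueness in Theorem \ref{TH:00} then forces $\Pr_\rho^\tau = \Pr_\rho$, and since $X_k^N = (R_k^N,V_k^N)$ are the coordinates of a process with law $\Pr_\rho$, this is exactly \eqref{EQ:40}. The concluding assertion that the $(R_k^N,V_k^N)$ are identically distributed follows by taking $F$ in \eqref{EQ:40} to depend on a single coordinate and $\tau$ a transposition.

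I do not anticipate a genuine obstacle here; the proof is essentially a soft argument once uniqueness in law is available. The only points that require care are the (routine but slightly tedious) bookkeeping of indices in checking $L(F\circ T_\tau) = (LF)\circ T_\tau$ on $C_c^1(\R^{2dN})$ — in particular in the jump term — and the remark that the moment bound $\sup_{s\le t}\E_\rho(\sum_{k}\langle v_k(s)\rangle^{4})<\infty$ which singles out the unique solution in Theorem \ref{TH:00} is itself invariant under permutation of the coordinates, so that the uniqueness statement applies verbatim to $\Pr_\rho^\tau$.
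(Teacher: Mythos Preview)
Your proof is correct and takes essentially the same approach as the paper: the paper's proof is a one-liner noting that $L$ maps symmetric functions onto symmetric functions and then invoking uniqueness of the martingale problem $(L,C_c^1(\R^{2dN}),\rho)$. Your version spells out the underlying mechanism --- the commutation identity $L(F\circ T_\tau)=(LF)\circ T_\tau$ and the verification that the pushed-forward law $\Pr_\rho^\tau$ solves the same martingale problem --- which is precisely what the paper's sentence is shorthand for.
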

\begin{proof}
 Since $L$ maps symmetric functions onto symmetric functions,
 the assertion follows from uniqueness of the martingale problem $(L, C_c^1(\R^{2dN}), \rho)$.
\end{proof}

\subsection{Moments of the particle dynamics}
Fix $N \geq 1$.
Below we prove some moment estimates (uniform in $N$) for the unique solution to \eqref{EQ:03}.
\begin{Corollary}\label{CORR:00}
 Suppose that \eqref{EQ:18} holds for some $p \geq 2$ and let $\rho \in \mathcal{P}(\R^{2dN})$ be symmetric with
 \[
  \int \limits_{\R^{2dN}}\sum \limits_{j=1}^{N}|v_j|^{2p} d\rho(r,v) < \infty.
 \]
 Let $(R_k^N, V_k^N)_{k = 1,\dots, N}$ be the unique solution to \eqref{EQ:03} defined on a stochastic basis $(\Omega^N, \F^N, (\F_t^N)_{t \geq 0},\Pr^N)$.  Then there exists a constant $C = C(\psi,\sigma) > 0$ (independent of $N$) such that, for $\gamma \in [0,2)$,
 \[
   \E^N\left( \frac{1}{N}\sum \limits_{j=1}^{N}\langle V_j^N(t)\rangle^{2p} \right) 
   \leq 2^{\frac{4p^2}{2-\gamma}} \E^N\left( \frac{1}{N} \sum \limits_{j=1}^{N}\langle V^N_j(0)\rangle^{2p}\right) + \left( C_p \frac{2-\gamma}{2p}\right)^{\frac{2p}{2-\gamma}}t^{\frac{2p}{2-\gamma}}, \ \ t \geq 0,
 \]
 where $C_p = C \lambda_{2p}2^{5p}$, and, for $\gamma = 2$,
 \begin{align}\label{EQ:12}
   \E^N\left( \frac{1}{N}\sum \limits_{j=1}^{N}\langle V_j^N(t)\rangle^{2p} \right) \leq 2^{2p}\E\left( \frac{1}{N}\sum \limits_{j=1}^{N}\langle V^N_j(0)\rangle^{2p} \right)e^{C_p t}, \ \ t \geq 0
 \end{align}
 Moreover, there exists another constant $C' = C'(\psi,\sigma) > 0$ such that
 \begin{align}\label{EQ:13}
  \E^N\left( \sup \limits_{t \in [0,T]} \langle V_1^N(t)\rangle^{2p-\gamma} \right) 
  \leq \E^N\left( \frac{1}{N}\sum \limits_{j=1}^{N}\langle V_j^N(0) \rangle^{2p-\gamma} \right) + C' \lambda_{2p} 2^{2p} \int \limits_{0}^{T}\E^N\left( \frac{1}{N}\sum \limits_{j=1}^{N}\langle V^N_j(s)\rangle^{2p}\right) ds
 \end{align}
\end{Corollary}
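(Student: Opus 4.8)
The plan is to apply the Itô formula for jump processes to the function $(r,v) \mapsto \frac{1}{N}\sum_{j=1}^{N}\langle v_j\rangle^{2p}$ evaluated along the unique weak solution $(R^N, V^N)$ to \eqref{EQ:03}, and then feed the resulting drift term into Lemma \ref{LEMMA:00}. Concretely, since the jump part of $V^N$ is governed by the Poisson random measure $\mathcal{N}$ with compensator \eqref{EQ:20}, the compensated Itô formula yields
\begin{align*}
 \frac{1}{N}\sum_{j=1}^{N}\langle V_j^N(t)\rangle^{2p}
 = \frac{1}{N}\sum_{j=1}^{N}\langle V_j^N(0)\rangle^{2p}
 + \int_0^t \frac{1}{N^2}\sum_{k,j=1}^{N}\psi(R_k^N-R_j^N)\sigma(V_k^N-V_j^N)\int_{\R^d}\!\bigl(\langle V_j^N(s)+u\rangle^{2p}-\langle V_k^N(s)\rangle^{2p}\bigr)a(u)\,du\,ds
 + M_t,
\end{align*}
where $M_t$ is a local martingale (the transport term $V\cdot\nabla_r$ does not act on $v$-dependent functions, so it drops out). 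The inner integrand with $|\cdot|^{2p}$ replaced by $\langle\cdot\rangle^{2p}$ is controlled exactly by the bound proved in Lemma \ref{LEMMA:00} up to the elementary comparison $\langle v\rangle^{2p}\le 2^p(1+|v|^{2p})$ and $|v|^{2p}\le\langle v\rangle^{2p}$; this gives a drift bounded by $C_p\,\frac{1}{N}\sum_{j=1}^N\langle V_j^N(s)\rangle^{2p-2+\gamma}$ with $C_p = C\lambda_{2p}2^{5p}$, where the extra power of $2$ absorbs the switch between $|\cdot|$ and $\langle\cdot\rangle$. Taking expectations kills $M_t$ (first localizing by stopping times $\tau_n$ and using the a priori fourth-moment bound from Theorem \ref{TH:00}, then letting $n\to\infty$ by monotone/Fatou), and one arrives at the integral inequality
\[
 \phi(t) \le \phi(0) + C_p \int_0^t \phi(s)^{\frac{2p-2+\gamma}{2p}}\,ds, \qquad \phi(t):= \E^N\Bigl(\tfrac{1}{N}\sum_{j=1}^{N}\langle V_j^N(t)\rangle^{2p}\Bigr),
\]
after applying Jensen's inequality $\E(\langle v\rangle^{2p-2+\gamma})\le (\E\langle v\rangle^{2p})^{(2p-2+\gamma)/(2p)}$ to the right-hand side, valid since $2p-2+\gamma\le 2p$.

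From here one closes the argument with the nonlinear Gronwall lemma announced in the appendix. When $\gamma\in[0,2)$ the exponent $\theta := \frac{2p-2+\gamma}{2p} \in [0,1)$ is strictly less than one, so the differential inequality $\phi' \le C_p\phi^{\theta}$ integrates to $\phi(t)^{1-\theta}\le \phi(0)^{1-\theta} + C_p(1-\theta)t$, and raising to the power $1/(1-\theta) = \frac{2p}{2-\gamma}$ together with the elementary bound $(a+b)^{1/(1-\theta)}\le 2^{\theta/(1-\theta)}(a^{1/(1-\theta)}+b^{1/(1-\theta)})$ gives precisely
\[
 \phi(t) \le 2^{\frac{4p^2}{2-\gamma}}\phi(0) + \Bigl(C_p\,\tfrac{2-\gamma}{2p}\Bigr)^{\frac{2p}{2-\gamma}} t^{\frac{2p}{2-\gamma}},
\]
since $\frac{\theta}{1-\theta} = \frac{2p-2+\gamma}{2-\gamma} \le \frac{4p^2}{2-\gamma}$ (one should double-check this bookkeeping of the constant $2^{\frac{4p^2}{2-\gamma}}$; it is generous and only needs $2p-2+\gamma\le 4p^2$, which is immediate). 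When $\gamma=2$ the exponent is $\theta=1$, the inequality is linear, $\phi' \le C_p\phi$, and Gronwall gives $\phi(t)\le\phi(0)e^{C_pt}$; the stated prefactor $2^{2p}$ in \eqref{EQ:12} again comes from passing between $\langle\cdot\rangle^{2p}$ and the quantity actually differentiated, and is harmless.

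For the supremum estimate \eqref{EQ:13}, the plan is to repeat the Itô computation but with exponent $2p-\gamma$ in place of $2p$, so that the drift is estimated by Lemma \ref{LEMMA:01} (applied with $2p$ there replaced by $2p-\gamma$, which is admissible since $2p-\gamma\ge\tfrac12$ under the hypothesis $p\ge 2$, $\gamma\le2$). This produces a drift bounded by $C'\lambda_{2p}2^{2p}\,\frac{1}{N}\sum_j\langle V_j^N(s)\rangle^{2p-\gamma+\gamma} = C'\lambda_{2p}2^{2p}\,\frac{1}{N}\sum_j\langle V_j^N(s)\rangle^{2p}$. The key point is then to take the supremum over $s\in[0,T]$ \emph{before} taking expectation: using exchangeability (Corollary \ref{CORR:05}) to replace $\langle V_1^N(t)\rangle^{2p-\gamma}$ by $\frac{1}{N}\sum_j\langle V_j^N(t)\rangle^{2p-\gamma}$, the drift term is already nondecreasing in $t$ so it survives the supremum directly, and the martingale part is handled by the Burkholder–Davis–Gundy inequality — here one should note that the quadratic variation of the jump martingale is again dominated (after compensation) by an expression of the same form $\int_0^T\frac{1}{N}\sum_j\langle V_j^N(s)\rangle^{2p}\,ds$, so BDG plus Young's inequality reabsorbs the $\sup$-martingale term into the left-hand side with the remainder pushed into the stated integral. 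Since \eqref{EQ:13} is asked only as an inequality with a fixed constant $C'$ (not the sharp one), the BDG constant can be folded silently into $C'$.

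I expect the main obstacle to be the justification that $M_t$ is a genuine (not merely local) martingale, or equivalently the localization/limiting step needed to take expectations: this is exactly the place where the fourth-moment control from Theorem \ref{TH:00} (and, for general $p$, the hypothesis $\int \sum_j |v_j|^{2p}d\rho<\infty$) is essential, since the integrand $\langle v\rangle^{2p}$ is unbounded and the naive application of the Itô formula is not immediately licensed. One proceeds by stopping at $\tau_n=\inf\{t:\frac1N\sum_j|V_j^N(t)|^2>n\}$, deriving the inequality for $\phi_n(t):=\E^N(\frac1N\sum_j\langle V_j^N(t\wedge\tau_n)\rangle^{2p})$, and then letting $n\to\infty$: the right-hand side is monotone so Fatou on the left and monotone convergence on the right close the loop, and the resulting finite bound on $\phi(t)$ retroactively confirms integrability of the whole expression, hence that $M_t$ was a true martingale after all. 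The rest is the bookkeeping of constants, which is routine.
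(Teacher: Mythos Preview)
Your plan is essentially the paper's own argument: It\^{o} formula, Lemma~\ref{LEMMA:00} for the drift, Jensen to close on $\phi^{1-\frac{2-\gamma}{2p}}$, then the Bihari--LaSalle inequality (Lemma~\ref{LASALLE}) for $\gamma<2$ and ordinary Gronwall for $\gamma=2$, with a stopping-time localization to justify taking expectations. One cosmetic difference: the paper applies It\^{o} to $\frac{1}{N}\sum_j|V_j^N|^{2p}$ so that Lemma~\ref{LEMMA:00} applies verbatim, and only afterwards converts to $\langle\cdot\rangle^{2p}$ via $\langle v\rangle^{2p}\le 2^{2p}+2^{2p}|v|^{2p}$ and $1+|v|^{2p}\le\langle v\rangle^{2p}$ (this is where the prefactor $2^{2p}$ on $\phi(0)$ comes from). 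Your route of applying It\^{o} directly to $\langle\cdot\rangle^{2p}$ is fine, but the ``elementary comparison'' you invoke is not a black-box reduction to Lemma~\ref{LEMMA:00}; one really has to rerun its proof for $\langle\cdot\rangle$ (the symmetry cancellation and Taylor remainder go through unchanged).

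For \eqref{EQ:13} your BDG detour is correct in principle but unnecessary, and your description of the quadratic variation is off: $[M]_T$ involves $|\langle v_j+u\rangle^{2p-\gamma}-\langle v_k\rangle^{2p-\gamma}|^2$, which is of order $\langle v\rangle^{4p-2\gamma}$, not $\langle v\rangle^{2p}$; you would need the extra trick $[M]_T\le 2(\sup_s X_s)\sum_s|\Delta X_s|$ plus Cauchy--Schwarz and Young to reabsorb and land on the right power. The paper's route is much shorter: write each $\langle V_k^N(t)\rangle^{2p-\gamma}$ via the \emph{uncompensated} Poisson integral, bound the integrand by its absolute value so the integral becomes nondecreasing, take $\sup_{t\le T}$, sum over $k$, and then take expectation (which replaces $\mathcal{N}$ by its compensator). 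Lemma~\ref{LEMMA:01} with $2p$ replaced by $2p-\gamma$ then bounds the compensator integral by $C'\lambda_{2p}2^{2p}\int_0^T\frac{1}{N}\sum_j\langle V_j^N(s)\rangle^{2p}\,ds$, and exchangeability converts $\E^N(\sup_t\langle V_1^N\rangle^{2p-\gamma})$ into $\E^N(\frac{1}{N}\sum_k\sup_t\langle V_k^N\rangle^{2p-\gamma})$. This gives \eqref{EQ:13} with coefficient $1$ on the initial term, with no BDG and no reabsorption.
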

\begin{proof}
 It follows from Lemma \ref{LEMMA:00} and the It\^{o} formula that
 \begin{align*}
  \E^N\left( \frac{1}{N}\sum \limits_{j=1}^{N}| V_j^N(t)|^{2p} \right) 
  &\leq \E^N\left( \frac{1}{N}\sum \limits_{j=1}^{N}| V_j^N(0)|^{2p} \right) + C \lambda_{2p} 2^{3p} \int \limits_{0}^{t}\E^N\left( \frac{1}{N}\sum \limits_{j=1}^{N} \langle V_j^N(s) \rangle^{2p-2 + \gamma} \right)ds
  \\ &\leq \E^N\left( \frac{1}{N}\sum \limits_{j=1}^{N}| V_j^N(0)|^{2p} \right) + C \lambda_{2p}2^{3p} \int \limits_{0}^{t} \E^N\left( \frac{1}{N}\sum \limits_{j=1}^{N} \langle V_j^N(s) \rangle^{2p} \right)^{1 - \frac{2-\gamma}{2p}} ds
 \end{align*}
 where we have used the Jensen inequality. Next observe that,
 by $1 + |v|^{2p} \leq \langle v \rangle^{2p}$ and previous estimate,
 \begin{align*}
  \E^N\left( \frac{1}{N}\sum \limits_{j=1}^{N}\langle V_j^N(t)\rangle^{2p} \right) 
  &\leq 2^{2p} + 2^{2p}\E^N\left( \frac{1}{N}\sum \limits_{j=1}^{N}| V_j^N(t)|^{2p} \right)
  \\ &\leq 2^{2p} \E^N\left( \frac{1}{N}\sum \limits_{j=1}^{N}\langle V_j^N(0)\rangle^{2p} \right)  + C_p \int \limits_{0}^{t} \E^N\left( \frac{1}{N}\sum \limits_{j=1}^{N} \langle V_j^N(s) \rangle^{2p} \right)^{1 - \frac{2-\gamma}{2p}} ds. 
 \end{align*}
 For $\gamma = 2$ we apply the Gronwall lemma, for $\gamma \in [0,2)$ we may apply a nonlinear version of the Gronwall lemma stated in the appendix.
 Finally assertion \eqref{EQ:13} follows by the It\^{o} formula, similar arguments to Lemma \ref{LEMMA:01} and Corollary \ref{CORR:05}.

To be more rigorous one has to consider the above estimates first for the variables $V^{N,m}(t) := V^N(t \wedge \tau_m)$ 
 where $\tau_m$ is a stopping time choosen in such a way that $V^N(t \wedge \tau_m)$ is bounded.
 Obtaining the desired estimates for $V^{N,m}(t)$ (with constants independent of $m$), one may then pass to the limit $m \to \infty$. 
 Since such type of arguments are rather standard, we leave the details for the reader.
\end{proof}
Using similar arguments and Lemma \ref{LEMMA:01} we can show propagation of exponential moments.
\begin{Corollary}\label{CORR:02}
 Suppose that $\gamma = 0$ and there exist $\delta > 0$ and $\kappa \in (0,1]$ such that \eqref{EQ:38} holds.
  Then there exists a constant $C = C(\psi,\sigma)> 0$ such that
  \[
   \E^N\left( \frac{1}{N} \sum \limits_{j=1}^{N} e^{\delta \langle V_j^N(t) \rangle^{\kappa}} \right) 
   + \E^N\left( \sup \limits_{s \in [0,t]} e^{\delta \langle V_1^N(t) \rangle^{\kappa}} \right)
   \leq \E^N\left( \frac{1}{N} \sum \limits_{j=1}^{N} e^{\delta \langle V_j^N(0) \rangle^{\kappa}} \right)e^{C t}.
  \]
\end{Corollary}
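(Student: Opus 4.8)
The plan is to repeat the proofs of Corollaries~\ref{CORR:00} and \ref{CORR:05} essentially verbatim, replacing the polynomial weight $\langle v\rangle^{2p}$ by the exponential weight $e^{\delta\langle v\rangle^{\kappa}}$ and using the exponential-moment lemma of Section~2.1 in place of Lemmas~\ref{LEMMA:00}--\ref{LEMMA:01}. Here $\rho\in\mathcal P(\R^{2dN})$ is taken symmetric with $\int_{\R^{2dN}}\sum_{j=1}^N e^{\delta\langle v_j\rangle^{\kappa}}\,d\rho<\infty$, as in the two preceding corollaries; I abbreviate $F(v):=\frac1N\sum_{j=1}^N e^{\delta\langle v_j\rangle^{\kappa}}$ and $C:=\|\psi\|_\infty\|\sigma\|_\infty\bigl(1+e^{\delta}c(\delta,\kappa)\bigr)$, and I use that $\gamma=0$ makes $\sigma$ bounded by~(B), so that the $N$-particle system has locally finite jump activity. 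Exactly as in the proof of Corollary~\ref{CORR:00}, I would first run all estimates for the stopped process $V^N(\cdot\wedge\tau_m)$, with $\tau_m$ the first time $\sum_k|V_k^N|\ge m$; the resulting bounds are independent of $m$, and monotone convergence as $m\to\infty$ removes the stopping. I suppress this localization below.

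For the first summand I would apply the Dynkin formula to $F(V^N(t))$. Since $F$ does not depend on the positions the transport part of $L$ drops out, and a jump of the $k$-th velocity (to $V_{l'}+u$) changes $F$ by $\frac1N\bigl(e^{\delta\langle V_{l'}+u\rangle^{\kappa}}-e^{\delta\langle V_k\rangle^{\kappa}}\bigr)$; hence, compensating against the Poisson measure of \eqref{EQ:20},
\begin{align*}
 \E^N\bigl(F(V^N(t))\bigr) &= \E^N\bigl(F(V^N(0))\bigr) \\
 &\quad + \E^N\!\int_0^t\frac1{N^2}\sum_{k,l'=1}^N\psi(R_k-R_{l'})\sigma(V_k-V_{l'})\int_{\R^d}\bigl(e^{\delta\langle V_{l'}+u\rangle^{\kappa}}-e^{\delta\langle V_k\rangle^{\kappa}}\bigr)a(u)\,du\,ds.
\end{align*}
Pulling out $\|\psi\|_\infty$ and applying the exponential-moment lemma with absolute values bounds the inner expression by $C\,F(V^N(s))$, and Gronwall's lemma then gives $\E^N(F(V^N(t)))\le\E^N(F(V^N(0)))e^{Ct}$, which is the first term of the assertion.

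For the second summand I would exploit the pure-jump structure instead of Gronwall. As $V_1^N$ is constant between jumps and has bounded total jump rate, $Y_t:=e^{\delta\langle V_1^N(t)\rangle^{\kappa}}$ is a piecewise-constant, finite-variation process, so pathwise $\sup_{s\in[0,t]}Y_s\le Y_0+\sum_{s\le t}|\Delta Y_s|$. The jumps of $Y$ arise from atoms of $\mathcal N$ with first index $l=1$; taking expectations, compensating, and invoking Corollary~\ref{CORR:05} to replace the tagged index $1$ by the symmetric average $\frac1N\sum_{k=1}^N$, I obtain
\begin{align*}
 \E^N\Bigl(\sum_{s\le t}|\Delta Y_s|\Bigr) &= \E^N\!\int_0^t\frac1{N^2}\sum_{k,l'=1}^N\psi(R_k-R_{l'})\sigma(V_k-V_{l'})\int_{\R^d}\bigl|e^{\delta\langle V_{l'}+u\rangle^{\kappa}}-e^{\delta\langle V_k\rangle^{\kappa}}\bigr|a(u)\,du\,ds \\
 &\le C\int_0^t\E^N\bigl(F(V^N(s))\bigr)\,ds,
\end{align*}
where the inequality is again the exponential-moment lemma. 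Inserting the first-step bound, the right-hand side is $\le\E^N(F(V^N(0)))(e^{Ct}-1)$, and since $\E^N(Y_0)=\E^N(F(V^N(0)))$ by symmetry of $\rho$ this yields $\E^N\bigl(\sup_{s\in[0,t]}e^{\delta\langle V_1^N(s)\rangle^{\kappa}}\bigr)\le\E^N(F(V^N(0)))e^{Ct}$. Adding the two bounds and absorbing the harmless prefactor into $C$ gives the claim.

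I expect the only points needing care to be the localization that makes the compensation identities rigorous — carried out exactly as in Corollary~\ref{CORR:00} — and the exchangeability step rewriting the single-particle jump integral for $Y$ as the symmetric double sum handled by the exponential-moment lemma, which is where Corollary~\ref{CORR:05} enters; everything else is a line-by-line copy of the proofs of Corollaries~\ref{CORR:00} and \ref{CORR:05}.
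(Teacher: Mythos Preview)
Your proposal is correct and follows the same route the paper has in mind: the paper gives no detailed proof for Corollary~\ref{CORR:02} beyond ``using similar arguments'' to Corollary~\ref{CORR:00}, and what you wrote is precisely that---It\^{o}/Dynkin for the averaged exponential weight combined with the exponential-moment lemma of Section~2.1 and Gronwall for the first term, and the absolute-value version of the same lemma together with exchangeability (Corollary~\ref{CORR:05}) for the running supremum, all under the localization already spelled out in Corollary~\ref{CORR:00}. The only cosmetic point is that in the paper's statement the sup is written over $s$ but the argument reads $V_1^N(t)$; you have silently (and correctly) interpreted it as $V_1^N(s)$.
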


\section{The infinite particle limit $N \to \infty$}
In this section we perform the limit $N \to \infty$ and identify the corresponding limiting process, i.e. we prove Theorem \ref{TH:01}.
Corollary \ref{CORR:01} can be deduced by the same arguments but now using the moment estimates from Corollary \ref{CORR:02}.

For each $N \geq 2$, let $\rho^{(N)} \in \mathcal{P}(\R^{2dN})$ be given by 
\[
  \rho^{(N)}(dr_1,dv_1,\dots, dr_N,dv_N) = \bigotimes_{k = 1}^{N} \mu_0(dr_k,dv_k)
\]
and denote by $(R_k^N,V_k^N)_{k=1,\dots,N}$ the unique weak solution to \eqref{EQ:03} defined on a 
stochastic basis $(\Omega_N, \F^N, (\F_t^N)_{t \geq 0}, \Pr^N)$ with the usual conditions.
Denote by $\mathcal{P}(D(\R_+;\R^{2d}))$ the space of probability measures over the Skorokhod space $D(\R_+;\R^{2d})$ and,
similarly let $\mathcal{P}(\mathcal{P}(D(\R_+;\R^{2d})))$ be the space of probability measures over $\mathcal{P}(D(\R_+;\R^{2d}))$.
Define a sequence of empirical measures
\begin{align}\label{EQ:33}
 \mu^{(N)} = \frac{1}{N} \sum \limits_{k=1}^{N} \delta_{(R_k^N, V_k^N)},
\end{align}
i.e. random variables with values in $\mathcal{P}( D(\R_+;\R^{2d}) )$ and 
denote by $\pi^{(N)} \in \mathcal{P}(\mathcal{P}(D(\R_+;\R^{2d})))$ the law of $\mu^{(N)}$.
The proof consists of the following two steps
\begin{enumerate}
 \item[Step 1.] Prove that $\pi^{(N)}$ is relatively compact and show that each limit is supported on processes having the desired moment bounds.
 \item[Step 2.] Prove that each limit $\pi^{(\infty)}$ of a subsequence of $\pi^{(N)}$ is supported on solutions to the nonlinear martingale problem 
 $(A, C_c^1(\R^{2d}), \mu_0)$.
\end{enumerate}

\subsection{Compactness and moment estimates}
Let us show that $(\pi^{(N)})_{N \geq 2}$ is relatively compact.
\begin{Proposition}
 $(\pi^{(N)})_{N \geq 2}$ is relatively compact in $\mathcal{P}(\mathcal{P}(D(\R_+;\R^{2d})))$.
\end{Proposition}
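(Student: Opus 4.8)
The plan is to establish relative compactness of $(\pi^{(N)})_{N \geq 2}$ in $\mathcal{P}(\mathcal{P}(D(\R_+;\R^{2d})))$ by the standard two-step reduction: first reduce to tightness of the laws of the individual coordinate processes, then verify Aldous-type tightness criteria for those processes using the moment estimates already proved in Section 2. Recall the general principle (see Sznitman): since the particles are exchangeable by Corollary \ref{CORR:05}, tightness of the sequence $(\pi^{(N)})_N$ of laws of empirical measures $\mu^{(N)} = \frac1N\sum_k \delta_{(R_k^N,V_k^N)}$ on $\mathcal{P}(D(\R_+;\R^{2d}))$ is equivalent to tightness of the sequence of laws of a single tagged particle $(R_1^N,V_1^N)$ on $D(\R_+;\R^{2d})$. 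So the first step is to invoke this equivalence and thereby reduce the problem to showing that the family of laws of $(R_1^N,V_1^N)_{N\geq 2}$ on $D(\R_+;\R^{2d})$ is tight.

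Next I would verify tightness of $(R_1^N,V_1^N)_N$ componentwise via the Aldous–Rebolledo / Joffe–Métivier criterion adapted to $D(\R_+;\R^{2d})$. For the position component $R_1^N(t) = R_1^N(0) + \int_0^t V_1^N(s)\,ds$, the process is Lipschitz in $t$ with a random constant controlled by $\sup_{s\in[0,T]}|V_1^N(s)|$; the uniform (in $N$) bound $\sup_N \E^N(\sup_{s\in[0,T]}\langle V_1^N(s)\rangle^{2p-\gamma}) < \infty$ coming from \eqref{EQ:13} together with \eqref{EQ:12} (or its $\gamma<2$ analogue) in Corollary \ref{CORR:00} gives equicontinuity of the $R$-paths and hence tightness of that component. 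For the velocity component, I would write $V_1^N(t)$ as the sum of its initial value plus the stochastic integral against $\mathcal N$ appearing in \eqref{EQ:03}, compute the associated predictable quadratic variation (or the compensator of the jump part), and bound it in terms of $\int_0^T \langle V_j^N(s)\rangle^{2+\gamma}$-type quantities, again uniformly in $N$ by Corollary \ref{CORR:00}. The Aldous condition on stopping times then follows: $\E^N|V_1^N((\tau+\theta)\wedge T) - V_1^N(\tau\wedge T)|^2 \lesssim \E^N\int_{\tau\wedge T}^{(\tau+\theta)\wedge T}(\cdots)\,ds$, which is $\leq C\theta$ uniformly in $N$ and in the stopping time $\tau$, by the moment bounds and boundedness of $\psi$. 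Combined with the compact-containment condition (also immediate from the uniform moment bounds and Markov's inequality), this yields tightness of $(V_1^N)_N$ in $D(\R_+;\R^d)$, hence of $(R_1^N,V_1^N)_N$ in $D(\R_+;\R^{2d})$, and therefore relative compactness of $(\pi^{(N)})_N$.

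I expect the main obstacle to be the bookkeeping for the velocity component's jump-integral compensator rather than anything conceptually deep: one must check that the integrand in \eqref{EQ:22}, which involves the indicator $\1_{[0,\psi(\cdot)\sigma(\cdot)]}(z)$ and the jump size $u + V_{l'}^N - V_l^N$, leads after integration against $\widehat{\mathcal N}$ in \eqref{EQ:20} to a compensator bounded by $\frac{C}{N}\sum_{k,j}\psi(R_k^N-R_j^N)\sigma(V_k^N-V_j^N)(1+|V_k^N| + |V_j^N| + \lambda_2^{1/2})^2$, and then use $\psi$ bounded and condition (B) on $\sigma$ together with a Young inequality (exactly as in Lemma \ref{LEMMA:01}) to dominate this by a constant times $\frac1N\sum_j \langle V_j^N\rangle^{2+\gamma}$. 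Since the hypotheses of Theorem \ref{TH:01} ensure $2p \geq 1+2\gamma \geq 2+\gamma$ whenever $\gamma \le 1$, and more generally $2p \ge \max\{4,1+2\gamma\} \ge 2+\gamma$ for all $\gamma\in[0,2]$, the bound \eqref{EQ:12} (resp. its $\gamma<2$ counterpart) indeed controls $\int_0^T \E^N(\frac1N\sum_j\langle V_j^N(s)\rangle^{2+\gamma})\,ds$ uniformly in $N$, closing the argument. The remark that each limit point is supported on processes satisfying the moment bound \eqref{EQ:34} is then a consequence of Fatou's lemma applied along the converging subsequence together with the uniform-in-$N$ bound on $\sup_{s\le t}\E^N(\langle V_1^N(s)\rangle^{\gamma})$, but this will be recorded separately in the proof of Step 2 rather than here.
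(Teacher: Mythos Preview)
Your proposal is correct and follows the same route as the paper: reduce via exchangeability (Sznitman) to tightness of a single tagged particle, then verify Aldous' criterion using the uniform-in-$N$ moment bounds of Corollary~\ref{CORR:00}. The paper argues exactly this way.

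One technical point: the paper checks the Aldous condition for the velocity component in $L^1$, bounding
\[
 \E^N\bigl|V_1^N(T^N)-V_1^N(S^N)\bigr| \leq C\,\delta\,\sup_{N}\E^N\Bigl(\sup_{\tau\in[0,M]}\langle V_1^N(\tau)\rangle^{1+\gamma}\Bigr),
\]
which only needs the sup-moment of order $1+\gamma$ available from \eqref{EQ:13} since $2p-\gamma\geq 1+\gamma$. Your $L^2$ version via the quadratic variation would instead need a sup-moment of order $2+\gamma$, i.e.\ $2p-\gamma\geq 2+\gamma$, and this is \emph{not} guaranteed by the standing assumption $2p\geq\max\{4,1+2\gamma\}$ once $\gamma>1$. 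Controlling only $\int_0^T\E^N(\tfrac1N\sum_j\langle V_j^N(s)\rangle^{2+\gamma})\,ds$, as you write, does not by itself yield $\E^N\int_{\tau}^{\tau+\theta}(\cdots)\,ds\leq C\theta$ for random stopping times $\tau$; you need the pathwise sup inside the expectation. The fix is trivial---just run your argument in $L^1$ instead of $L^2$---but as written the moment bookkeeping does not quite close for $\gamma$ close to $2$.
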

\begin{proof}
 In view of \cite[Proposition 2.2]{S91}, see also Corollary \ref{CORR:05}, it suffices to show that $(R_1^N,V_1^N)$ is tight in $D(\R_+;\R^{2d})$.
 First we observe that
 \[
  \sup \limits_{t \in [0,T]}\E^N\left(|R_1^N(t)|\right) \leq \sup \limits_{N \geq 2}\E^N\left( | R_1^N(0) |\right) + T \sup \limits_{N \geq 2}\E^N\left( \sup \limits_{t \in [0,T]}| V_1^N(t)|\right) < \infty,
 \]
 where the right-hand side is finite due to the moment estimates of previous section.
 We seek to apply the Aldous criterion (see e.g. \cite{JS03}).
 For each $N \geq 2$ let $S^N,T^N$ be $(\F_t^N)_{t \geq 0}$ stopping times such that for $M \in \N$ and $\delta \in (0,1]$
 we have $S^N \leq T^N \leq S^N + \delta$ and $S^N,T^N \leq M$. Then
 \[
  \E^N\left( |R_1^N(T^N) - R_1^N(S^N)| \right) \leq \delta \sup \limits_{N \geq 2}\E^N\left( \sup \limits_{\tau \in [0,M]}|V_1^N(\tau)|\right)
 \]
 and similarly by \eqref{EQ:03}
 \begin{align*}
  &\ \E^N\left( |V_1^N(T^N) - V_1^N(S^N)| \right) 
  \\ &\leq \frac{C}{N}\sum \limits_{j=1}^{N}\E^N\left( \int \limits_{S^N}^{T^N}\int \limits_{\R^d}| u + (V_j^N(\tau) - V_1^N(\tau))| \left( \langle V_1^N(\tau)\rangle^{\gamma} + \langle V_j^N(\tau)\rangle^{\gamma}\right)a(u)dud\tau \right)
  \\ &\leq C \delta \sup \limits_{N \geq 2} \E^N\left( \sup \limits_{\tau \in [0,M]}\langle V_1^N(\tau)\rangle^{1+\gamma}\right).
 \end{align*}
 Since $2p \geq \max\{4, 1+2\gamma\}$, the moment estimates of previous section imply that the right-hand sides are finite.
 This proves the assertion.
\end{proof}
For $\nu \in \mathcal{P}(D(\R_+;\R^{2d}))$ let $\nu_t \in \mathcal{P}(\R^{2d})$ be the time-marginal at time $t \geq 0$ and, for $q \geq 0$, set
\[
 \| \nu_t \|_q := \int \limits_{\R^{2d}} \langle v \rangle^q \nu_t(dr,dv).
\]
The next lemma provides moment estimates for the limits of the empirical measure.
\begin{Lemma}\label{LEMMA:02}
 There exists a constant $C = C(\psi,\sigma) > 0$ such that for all $t \geq 0$ we have, for $\gamma \in [0,2)$,
 \[
  \int \limits_{\mathcal{P}(D(\R_+;\R^{2d}))}\| \nu_t \|_{2p} d\pi^{(\infty)}(\nu) 
  \leq 2^{\frac{4p^2}{2-\gamma}} \int \limits_{\R^{2d}}\langle v \rangle^{2p} \mu_0(dr,dv) + \left(C_p \frac{2-\gamma}{2p}\right)^{\frac{2p}{2-\gamma}} t^{\frac{2p}{2-\gamma}},
 \]
 with $C_p = C \lambda_{2p} 2^{5p}$ and, for $\gamma = 2$,
 \[
  \int \limits_{\mathcal{P}(D(\R_+;\R^{2d}))}\| \nu_t \|_{2p} d\pi^{(\infty)}(\nu) 
  \leq \left( \int \limits_{\R^{2d}}\langle v \rangle^{2p} \mu_0(dr,dv) \right)e^{C_p t}.
 \]
\end{Lemma}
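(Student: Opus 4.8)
The plan is to transfer the uniform-in-$N$ moment estimates of Corollary \ref{CORR:00} to the limit $\pi^{(\infty)}$ by a lower-semicontinuity / Fatou argument, after truncating the test functional so that we may use weak convergence. First I would record the identity, valid for the empirical measure, that for any $q \geq 0$
\[
 \| \mu^{(N)}_t \|_q = \int_{\R^{2d}} \langle v \rangle^q \mu^{(N)}_t(dr,dv) = \frac{1}{N}\sum_{k=1}^{N}\langle V_k^N(t)\rangle^q,
\]
so that, by Corollary \ref{CORR:00} (taking expectations with respect to $\Pr^N$, i.e. integrating against $\pi^{(N)}$), we have for each fixed $t \geq 0$
\[
 \int_{\mathcal{P}(D(\R_+;\R^{2d}))} \| \nu_t \|_{2p}\, d\pi^{(N)}(\nu) \leq 2^{\frac{4p^2}{2-\gamma}}\int_{\R^{2d}}\langle v \rangle^{2p}\mu_0(dr,dv) + \left(C_p\frac{2-\gamma}{2p}\right)^{\frac{2p}{2-\gamma}} t^{\frac{2p}{2-\gamma}}
\]
in the case $\gamma \in [0,2)$, and the analogous bound with $e^{C_p t}$ for $\gamma = 2$; here I used that $\rho^{(N)} = \mu_0^{\otimes N}$, so $\E^N(\frac{1}{N}\sum_k \langle V_k^N(0)\rangle^{2p}) = \int \langle v\rangle^{2p}\mu_0(dr,dv)$.

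Next I would pass to the limit along the subsequence defining $\pi^{(\infty)}$. The obstacle is that $\nu \mapsto \|\nu_t\|_{2p}$ is neither bounded nor continuous on $\mathcal{P}(D(\R_+;\R^{2d}))$, so I cannot apply the portmanteau theorem directly. To fix this, introduce the truncated functionals $\Phi_{M,t}(\nu) := \int_{\R^{2d}} (\langle v\rangle^{2p}\wedge M)\,\nu_t(dr,dv)$ for $M \in \N$. For $\Phi_{M,t}$ to be $\pi^{(\infty)}$-almost-everywhere continuous (and bounded) it suffices that $\pi^{(\infty)}$ charges only those $\nu$ for which $t$ is a continuity point of $s \mapsto \nu_s$ in the weak topology; since the set of non-continuity times of any $\nu \in \mathcal{P}(D(\R_+;\R^{2d}))$ is at most countable, one can either argue this holds for all but countably many $t$ and then use right-continuity, or — cleaner — replace the evaluation at a single time by the continuous functional $\nu \mapsto \frac{1}{h}\int_t^{t+h}\int (\langle v\rangle^{2p}\wedge M)\,\nu_s(dr,dv)\,ds$, let $h \downarrow 0$ afterward, and invoke right-continuity of $\|\nu_\cdot\|$ which follows from the càdlàg property together with Fatou. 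In either case, weak convergence $\pi^{(N)} \to \pi^{(\infty)}$ gives
\[
 \int \Phi_{M,t}(\nu)\,d\pi^{(\infty)}(\nu) = \lim_{N}\int \Phi_{M,t}(\nu)\,d\pi^{(N)}(\nu) \leq \limsup_N \int \|\nu_t\|_{2p}\,d\pi^{(N)}(\nu),
\]
which is bounded by the right-hand side of the claimed inequality, uniformly in $M$. Finally I let $M \to \infty$ and apply the monotone convergence theorem on the left, together with $\Phi_{M,t}(\nu) \uparrow \|\nu_t\|_{2p}$, to obtain the stated bound for $\pi^{(\infty)}$.

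The main subtlety, and the part requiring the most care, is precisely the continuous-time evaluation issue just described: justifying that $\nu \mapsto \nu_t$ interacts well with weak convergence on the Skorokhod space, and that passing to the averaged-in-time functional and then back costs nothing, using the càdlàg regularity and Fatou's lemma. Everything else — the combinatorial identity for the empirical moments, the invocation of Corollary \ref{CORR:00}, and the monotone limit in $M$ — is routine. I would also note that exactly the same scheme, with $\langle v\rangle^{2p}$ replaced by $e^{\delta\langle v\rangle^{\kappa}}$ and Corollary \ref{CORR:00} replaced by Corollary \ref{CORR:02}, yields the corresponding exponential-moment bound needed later for Corollary \ref{CORR:01}.
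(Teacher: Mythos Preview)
Your proposal is correct and follows essentially the same route as the paper: the paper's proof is the two-line remark ``by approximation and the Lemma of Fatou'' together with the identity $\int \|\nu_t\|_{2p}\,d\pi^{(N)}(\nu)=\E^N\bigl(\tfrac1N\sum_j\langle V_j^N(t)\rangle^{2p}\bigr)$ and an appeal to Corollary~\ref{CORR:00}, and what you have written is precisely the unpacking of that sentence (truncation $\Phi_{M,t}$, weak convergence, monotone convergence in $M$). Your additional care about the continuity of $\nu\mapsto\nu_t$ on Skorokhod space is warranted and handled correctly; the paper suppresses this point here but addresses the same issue later via the sets $D_\mu$ and $D(\pi^{(\infty)})$.
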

\begin{proof}
 By approximation and the Lemma of Fatou we get
 \begin{align*}
  \int \limits_{\mathcal{P}(D(\R_+;\R^{2d}))}\| \nu_t \|_{2p} d\pi^{(\infty)}(\nu) 
  &\leq \sup \limits_{N \geq 2} \int \limits_{\mathcal{P}(D(\R_+;\R^{2d}))}\| \nu_t \|_{2p} d\pi^{(N)}(\nu) 
  \\ &= \sup \limits_{N \geq 2} \E^N\left( \frac{1}{N} \sum \limits_{j=1}^{N}\langle V_j^N(t) \rangle^{2p} \right).
 \end{align*}
 The assertion now follows from Corollary \ref{CORR:00}.
\end{proof}
From this we readily deduce, after we have completed Step 2 and Step 3, the desired moment estimates \eqref{EQ:26}. 
Estimate \eqref{EQ:28} follows from the It\^{o} formula and a direct computation.

\subsection{Identifying the limit}
The following shows that each limit point $\pi^{(\infty)}$ of a subsequence of $(\pi^{(N)})_{N \geq 2}$
is supported on solutions to the nonlinear martingale problem $(A,C_c^1(\R^{2d}), \mu_0)$.
\begin{Proposition}\label{PROP:00}
 Let $\pi^{(\infty)} \in \mathcal{P}( \mathcal{P}(D(\R_+;\R^{2d})) )$ be any weak limit of a subsequence of $(\pi^{(N)})_{N \geq 2}$.
 Then $\pi^{(\infty)}$-a.a. $\mu \in \mathcal{P}(D(\R_+;\R^{2d}))$ solve the nonlinear martingale problem $(A, C_c^1(\R^{2d}), \mu_0)$.
\end{Proposition}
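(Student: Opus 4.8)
The plan is to use the standard martingale-problem machinery for passing to the limit in empirical measures, in the spirit of \cite{S91}, but with an additional truncation to handle the fact that $A(\nu)g$ need not be bounded when $\gamma > 0$. Fix $g \in C_c^1(\R^{2d})$, times $0 \le s_1 < \dots < s_k \le s < t$, bounded continuous functions $\phi_1,\dots,\phi_k$ on $\R^{2d}$, and for $\nu \in \mathcal P(D(\R_+;\R^{2d}))$ define the functional
\[
 \Psi(\nu) = \E_\nu\!\left[ \left( g(x(t)) - g(x(s)) - \int_s^t (A(\nu_\tau)g)(x(\tau))\, d\tau \right) \prod_{i=1}^k \phi_i(x(s_i)) \right].
\]
It suffices to show $\int \Psi(\nu)^2\, d\pi^{(\infty)}(\nu) = 0$ for a countable separating family of such test data, since then $\pi^{(\infty)}$-a.a.\ $\nu$ make \eqref{EQ:36} a martingale; conditions (i) and (ii) of the definition are already delivered by $\mu^{(N)}(x(0)\in\cdot) = \mu_0$ passing to the limit and by Lemma \ref{LEMMA:02} respectively.

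First I would relate $\Psi$ to the prelimit martingales. For the $N$-particle system, the generator acting on $x \mapsto g(x_1)$ is $L g(x_1) = v_1\cdot\nabla_r g(x_1) + \frac1N\sum_j \psi(r_1-r_j)\sigma(v_1-v_j)\int (g(r_1,v_j+u)-g(x_1))a(u)du$, which is precisely $(A(\mu^{(N)}_\tau)g)(x_1)$ evaluated at $x_1 = X_1^N(\tau)$ — this is the key algebraic identity linking $L$ and $A$. Hence by Theorem \ref{TH:00} and exchangeability (Corollary \ref{CORR:05}),
\[
 \E^N\!\left[ \left( g(X_1^N(t)) - g(X_1^N(s)) - \int_s^t (A(\mu^{(N)}_\tau)g)(X_1^N(\tau))\, d\tau \right)\prod_{i=1}^k \phi_i(X_1^N(s_i)) \right] = 0,
\]
and by exchangeability this equals $\E^N[\Psi(\mu^{(N)})]$. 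A second-moment computation, again using that distinct particles become asymptotically decorrelated — more precisely, expanding $\E^N[\Psi(\mu^{(N)})^2]$ as an average over pairs $(k,k')$ of terms that are products of two centered martingale increments for particles $k$ and $k'$, whose covariance is $O(1/N)$ because the coupling through the empirical measure contributes only $O(1/N)$ per pair — gives $\E^N[\Psi(\mu^{(N)})^2] \to 0$. Here I would use the uniform moment bounds of Corollary \ref{CORR:00} with $2p \ge \max\{4,1+2\gamma\}$ to control the $L^2$-norms of the martingale increments (the integrand of $A(\nu_\tau)g$ grows like $\langle v\rangle^{1+\gamma}$ against the compactly supported $g$, so squares cost $\langle v\rangle^{2+2\gamma}$, which is integrable).

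The main obstacle is the continuity of $\nu \mapsto \Psi(\nu)$ on $\mathcal P(D(\R_+;\R^{2d}))$, which is needed to transfer $\E^N[\Psi(\mu^{(N)})^2]\to 0$ into $\int \Psi^2\, d\pi^{(\infty)} = 0$ via weak convergence of $\pi^{(N)}$. The term $g(x(t))-g(x(s))$ and the $\phi_i$-factors are fine (one must take $t,s,s_i$ outside a countable set of fixed times, or handle them via the usual Skorokhod-continuity-set argument), but $\nu \mapsto \int_s^t (A(\nu_\tau)g)(x(\tau))\,d\tau$ is problematic precisely because $\psi(r-q)\sigma(v-w)(g(r,w+u)-g(r,v))$, viewed as a function of $(q,w)$, is \emph{not} bounded when $\gamma>0$, so $\nu_\tau \mapsto \int (\cdots)\nu_\tau(dq,dw)$ is not weakly continuous. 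This is exactly the difficulty flagged in the remark after Theorem \ref{TH:01}. The plan to circumvent it is a truncation argument: introduce $\sigma_R := \sigma \wedge R$, let $A_R(\nu)$ be $A(\nu)$ with $\sigma$ replaced by $\sigma_R$ and $\Psi_R$ the corresponding functional; then $\nu \mapsto \Psi_R(\nu)$ \emph{is} bounded and continuous, so weak convergence gives $\int \Psi_R^2\, d\pi^{(\infty)} = \lim_N \E^N[\Psi_R(\mu^{(N)})^2]$. One then shows (a) $\E^N[\Psi_R(\mu^{(N)})^2] = \E^N[\Psi(\mu^{(N)})^2] + o_R(1)$ uniformly in $N$, using $|\Psi - \Psi_R| \lesssim \int_s^t \E^N[(\sigma-\sigma_R)(\cdots)\langle\cdot\rangle\cdots]d\tau$ and the bound $\sigma - \sigma_R \le \sigma \1_{\{\sigma > R\}} \le c_\sigma \langle v-w\rangle^\gamma \1_{\{\langle v-w\rangle^\gamma > R/c_\sigma\}}$ together with uniform higher moments from Corollary \ref{CORR:00}; and (b) $\int |\Psi - \Psi_R|\, d\pi^{(\infty)} \to 0$ as $R\to\infty$, using Lemma \ref{LEMMA:02} and Fatou to dominate. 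Combining (a), (b), the prelimit identity, and the second-moment estimate, and letting $R\to\infty$, yields $\int \Psi^2\, d\pi^{(\infty)} = 0$. Running this over a countable separating family of $(g,s,t,k,s_i,\phi_i)$ completes the proof.
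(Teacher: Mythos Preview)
Your approach is essentially the same as the paper's: both reduce to showing that the functional $F(\mu)=\Psi(\mu)$ vanishes in $L^2(\pi^{(N)})$ via an It\^{o}/martingale computation, and both pass to the limit through a truncation of the jump part of $A(\nu)$. Two differences and one inaccuracy are worth noting.

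\textbf{Truncation.} Your single truncation $\sigma_R=\sigma\wedge R$ is slightly more economical than the paper's. The paper first truncates in the integration variable $w$ via $\varphi(|w|^2/R^2)$; after this the resulting $A_R(\nu)g$ still grows like $\langle v\rangle^{\gamma}$ in the evaluation variable, which forces a \emph{second} truncation in $\sup_{\tau}\langle v(\tau)\rangle$ to obtain a bounded continuous functional on $\mathcal{P}(D(\R_+;\R^{2d}))$. Your choice kills both growths at once. Both variants are controlled by the same moment estimates (Corollary~\ref{CORR:00}, Lemma~\ref{LEMMA:02}). The paper also passes to the limit in $\int|F|\,d\pi^{(N)}$ rather than $\int F^2\,d\pi^{(N)}$, which needs only $L^1$-control of the truncation error; your $L^2$ route is fine given the available moments (and in fact Fatou together with pointwise convergence $\Psi_R\to\Psi$ already suffices).

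\textbf{The cross-term.} Your justification of $\E^N[\Psi(\mu^{(N)})^2]\to 0$ is not quite right. You say the covariance of the martingale increments for distinct particles $k\neq k'$ is ``$O(1/N)$ because the coupling through the empirical measure contributes only $O(1/N)$ per pair''. That is the wrong mechanism. The paper computes the covariation $\langle M^{N,k},M^{N,j}\rangle$ directly from the Poisson structure of \eqref{EQ:03}: because the jump term carries the factor $e_l$ (see \eqref{EQ:22}), the $k$-th and $j$-th components never jump from the same Poisson atom, so $\langle M^{N,k},M^{N,j}\rangle=0$ \emph{exactly} for $k\neq j$. The off-diagonal double sum therefore vanishes identically, leaving only the $N$ diagonal terms of size $O(1)$ divided by $N^2$. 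Your heuristic reaches the right order but not by a justified route; in Boltzmann-type models where two particles jump simultaneously the cross-covariation is genuinely nonzero and must be estimated on its own.
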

The rest of this section is devoted to the proof of this proposition.
 It is not difficult to see that the complement of 
\[
 D_{\mu} = \left\{ t > 0 \ | \ \mu \left(  (r,v) \in D(\R_+;\R^{2d}) : (r(t),v(t)) = (r(t-),v(t-))  \right) = 1  \right\}
\]
 is at most countable
 and the coordinate function $(r,v) \longmapsto (r(t),v(t))$ is $\mu$-a.s. continuous, for any $t \in D_{\mu}$ and any $\mu \in \mathcal{P}(D(\R_+;\R^{2d}))$.
 Moreover, we can show that also the complement of
 \[
  D(\pi^{(\infty)}) = \left\{ t > 0 \ | \ \pi^{(\infty)}\left(  \mu \in \mathcal{P}(D(\R_+;\R^{2d})) \ : \ t \in D_{\mu} \right)  = 1 \right\}
 \]
 is at most countable.

 Let $0 \leq t_1, \dots, t_m \leq s \leq t$ with $t_1,\dots, t_m,s,t \in D(\pi^{(\infty)})$, 
 $m \in \N$, $g_1,\dots, g_m \in C_b(\R^{2d})$ and $g \in C_c^1(\R^{2d})$.
 For $(r,v) \in D(\R_+;\R^{2d})$ and $\mu \in \mathcal{P}(D(\R_+;\R^{2d}))$ set
 \begin{align}\label{CS:00}
  H(\mu;r,v) := \left(g(r(t),v(s)) - g(r(s),v(s)) - \int \limits_{s}^{t}(A(\mu_{\tau})g)(r(\tau),v(\tau))d\tau\right) \prod \limits_{j=1}^{m}g_j(r(t_j),v(t_j))
 \end{align}
 and define
 \begin{align}\label{EQ:24}
  F(\mu) := \int \limits_{D(\R_+;\R^{2d})}H(\mu;r,v)\mu(dr,dv).
 \end{align}
 It is clear that $\mu$ is a solution to the nonlinear martingale problem  $(A,C_c^1(\R^{2d}), \mu_0)$, provided
 $\mu(x(0) \in \cdot) = \mu_0$, \eqref{EQ:34} holds and $F(\mu) = 0$.
 Since, by Lemma \ref{LEMMA:02}, $\pi^{(\infty)}$-a.a. $\mu$ satisfy \eqref{EQ:34} and $\mu(x(0) \in \cdot) = \mu_0$, it suffices to show that 
 \begin{enumerate}
  \item[(a)] $\lim_{N \to \infty} \int_{\mathcal{P}(D(\R_+;\R^{2d}))} |F(\mu)|^2 d\pi^{(N)}(\mu) = 0$,
  \item[(b)] $\lim_{N \to \infty} \int_{\mathcal{P}(D(\R_+;\R^{2d}))} |F(\mu)| d\pi^{(N)}(\mu)
   = \int_{\mathcal{P}(D(\R_+;\R^{2d}))} |F(\mu)| d\pi^{(\infty)}(\mu)$,
 \end{enumerate}
 where for simplicity of notation $\pi^{(N)}$ denotes the subsequence converging weakly to $\pi^{(\infty)}$.
 Let us first prove (a).
\begin{Lemma}
 Assertion (a) is satisfied.
\end{Lemma}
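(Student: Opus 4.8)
The plan is to pass from the $\pi^{(N)}$-integral to an expectation over the $N$-particle system and to exploit the elementary but decisive fact that the $N$-particle generator $L$, applied to a one-particle observable, reproduces the nonlinear operator $A$ of \eqref{EQ:09} evaluated at the empirical measure. Since $\pi^{(N)}$ is the law of $\mu^{(N)}=\frac1N\sum_{k=1}^N\delta_{(R_k^N,V_k^N)}$, writing $H_k^N:=H(\mu^{(N)};R_k^N,V_k^N)$ for the integrand of \eqref{EQ:24} read along the path of particle $k$, we have
\[
 \int_{\mathcal P(D(\R_+;\R^{2d}))}|F(\mu)|^2\,d\pi^{(N)}(\mu)=\E^N\Big(\Big|\tfrac1N\sum_{k=1}^N H_k^N\Big|^2\Big)=\frac1{N^2}\sum_{k,l=1}^N\E^N\big(H_k^N H_l^N\big).
\]
For $g\in C_c^1(\R^{2d})$ put $\Phi_k(x_1,\dots,x_N):=g(x_k)\in C_b^1(\R^{2dN})$; a one-line computation with the explicit form of $L$ gives $(L\Phi_k)(x_1,\dots,x_N)=\big(A(\tfrac1N\sum_{j=1}^N\delta_{x_j})g\big)(x_k)$, because only the summand whose first index equals $k$ survives. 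Combining the martingale characterization of the unique solution to \eqref{EQ:03} (Theorem \ref{TH:00}), its extension to a local martingale for $C_b^1$ observables via \eqref{EQ:29}, and the $N$-uniform moment estimates of Section 2 — which dominate the compensator and so turn this local martingale into a genuine square-integrable martingale on every $[0,t]$ (here $2p\geq\max\{4,1+2\gamma\}$ ensures $\E^N(\langle V_1^N(\theta)\rangle^{2\gamma})$ and $\E^N(\sup_{\theta\leq t}|V_1^N(\theta)|^2)$ are finite and uniform in $N$) — the process
\[
 M_k^N(\theta):=g(R_k^N(\theta),V_k^N(\theta))-g(R_k^N(0),V_k^N(0))-\int_0^{\theta}\big(A(\mu_{\vartheta}^{(N)})g\big)(R_k^N(\vartheta),V_k^N(\vartheta))\,d\vartheta
\]
is a martingale for the filtration driving \eqref{EQ:03}, and $H_k^N=(M_k^N(t)-M_k^N(s))\prod_{j=1}^m g_j(R_k^N(t_j),V_k^N(t_j))$ with the product bounded and $\F_s^N$-measurable because $t_1,\dots,t_m\leq s$.

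\textbf{Off-diagonal terms.} For $k\neq l$ I would use that $M_k^N$ is purely discontinuous apart from a continuous finite-variation (transport) part, and that its jumps occur precisely at those atoms of the Poisson measure $\mathcal N$ from \eqref{EQ:20} whose first coordinate equals $k$ (cf.\ \eqref{EQ:22}). Since $\mathcal N$ a.s.\ has no two atoms at a common time, $M_k^N$ and $M_l^N$ have no common jump, so $[M_k^N,M_l^N]\equiv 0$; with the square-integrability above, $M_k^N M_l^N$ is a martingale and therefore
\[
 \E^N\big((M_k^N(t)-M_k^N(s))(M_l^N(t)-M_l^N(s))\,\big|\,\F_s^N\big)=0.
\]
Conditioning on $\F_s^N$ and pulling out the bounded $\F_s^N$-measurable prefactors yields $\E^N(H_k^N H_l^N)=0$ for all $k\neq l$.

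\textbf{Diagonal terms.} For $k=l$ one has $\E^N((H_k^N)^2)\leq\big(\prod_{j=1}^m\|g_j\|_\infty^2\big)\E^N\big((M_k^N(t)-M_k^N(s))^2\big)=\big(\prod_{j=1}^m\|g_j\|_\infty^2\big)\E^N\big(\langle M_k^N\rangle_t-\langle M_k^N\rangle_s\big)$, where $\langle M_k^N\rangle$ has time-density
\[
 \frac1N\sum_{j=1}^N\psi(R_k^N-R_j^N)\sigma(V_k^N-V_j^N)\int_{\R^d}\big(g(R_k^N,V_j^N+u)-g(R_k^N,V_k^N)\big)^2 a(u)\,du.
\]
Estimating $|g|\leq\|g\|_\infty$, $\psi\leq\|\psi\|_\infty$ and, by (B), $\sigma(V_k^N-V_j^N)\leq c_\sigma 2^{\gamma/2}(\langle V_k^N\rangle^{\gamma}+\langle V_j^N\rangle^{\gamma})$, and then inserting the $N$-uniform bounds of Corollary \ref{CORR:00} and Lemma \ref{LEMMA:02} for $\sup_{\theta\leq t}\E^N(\tfrac1N\sum_j\langle V_j^N(\theta)\rangle^{\gamma})$ and $\sup_{\theta\leq t}\E^N(\langle V_1^N(\theta)\rangle^{\gamma})$ (finite since $\gamma\leq 2\leq 2p$), one obtains $\E^N((H_k^N)^2)\leq C(t-s)$ with $C$ independent of $N$ and $k$. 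Combining the two cases,
\[
 \int_{\mathcal P(D(\R_+;\R^{2d}))}|F(\mu)|^2\,d\pi^{(N)}(\mu)=\frac1{N^2}\sum_{k=1}^N\E^N\big((H_k^N)^2\big)\leq\frac{C(t-s)}{N}\longrightarrow 0,\qquad N\to\infty,
\]
which is assertion (a).

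\textbf{Main obstacle.} The two points needing care are the promotion of $M_k^N$ from the local martingale supplied by Theorem \ref{TH:00} for $C_b^1$ test functions to a true square-integrable martingale, and the justification of the orthogonality identity used to annihilate the cross terms; both rest on the uniform-in-$N$ moment estimates of Section 2, which dominate $\int_0^t|A(\mu_{\vartheta}^{(N)})g|\,d\vartheta$ and $\langle M_k^N\rangle_t$ uniformly in $N$ (the relevant moments being controlled because $2p\geq\max\{4,1+2\gamma\}$ forces $2\gamma\leq 2p$ and $2\leq 2p$). Everything else is routine bookkeeping with bounded functions.
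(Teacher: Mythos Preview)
Your proof is correct and follows essentially the same route as the paper: both identify $H_k^N$ with a martingale increment times a bounded $\F_s^N$-measurable factor, kill the off-diagonal terms via the vanishing of the predictable covariation $\langle M_k^N,M_l^N\rangle$ (which the paper reads off from the explicit Poisson-integral representation, while you deduce it from the disjointness of jump times for different first indices), and bound the diagonal terms by $C/N$ using the uniform moment estimates of Section~2. One cosmetic slip: $M_k^N$ is purely discontinuous and has no continuous finite-variation part---the transport term $v\cdot\nabla_r g$ is already absorbed into the compensator $\int A(\mu^{(N)}_\vartheta)g\,d\vartheta$---but this does not affect your orthogonality argument.
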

\begin{proof}
 Let $\widetilde{\mathcal{N}}(ds,dl,dl',du,dz)$ be the compensated Poisson random measure
 on $\R_+ \times \{1, \dots, N\}^2 \times \R^{d} \times \R_+$ with compensator given by \eqref{EQ:20}, 
 \begin{align}\label{EQ:100}
  G(r,v, u, l,l', z) = e_l(u + v_{l'} - v_l) \1_{[0, \psi(r_{l} - r_{l'})\sigma(v_l - v_{l'})]}(z),
 \end{align}
 where $(r,v) \in \R^{2dN}$, $z \in \R_+$, $u \in \R^d$ and $(l,l') \in \{1,\dots, N\}^2$ is defined as in \eqref{EQ:22} and set
 \[
 M_{s,t}^{N,k} = \int_{s}^{t}\int_{E}\left( g(R_k^{N}(\tau), V_k^{N}(\tau-) + G_k) -  g(R_k^{N}(\tau),V_k^{N}(\tau-))\right)\widetilde{\mathcal{N}}(d\tau,dl,dl',du,dz),
 \]
 where $E := \{1,\dots,N\}^2\times \R^d \times \R_+$ and $G_k = G_k(R^N(\tau), V^N(\tau-), u,l,l',z)$. Then
 \begin{align*}
  &\ (A(\mu^{(N)})g)(R_k^N,V_k^N) = V_k^{N} \cdot (\nabla_r g)(R_k^N, V_k^N)
  \\ &+ \frac{1}{N}\sum \limits_{j=1}^{N}\psi(R_k^N - R_j^N)\sigma(V_k^N - V_j^N) \int \limits_{\R^d}\left( g(R_k^N, V_j^N + u) - g(R_k^N,V_k^N)\right)a(u)du
 \end{align*}
 and from the It\^{o} formula one immediately obtains
 \begin{align*}
  g(R_k^N(t),V_k^N(t)) = g(R_k^N(s),V_k^N(s)) + \int \limits_{s}^{t}(A(\mu_{\tau}^{(N)})g)(R_k^N(\tau),V_k^N(\tau))d\tau + M_{s,t}^{N,k}.
 \end{align*}
 This shows that
 \[
  F(\mu^{(N)}) = \frac{1}{N} \sum \limits_{k=1}^{N}H(\mu^{(N)}, X_k^N) = \frac{1}{N}\sum_{k=1}^{N}M_{s,t}^{N,k}\prod_{j=1}^{m}g_j(R_k^{N}(t_j), V_k^N(t_j)).
 \]
 For the Doob-Meyer process of $M_{s,t}^{N,k}$ we obtain
 \begin{align*}
  \langle M_{s,t}^{N,k} \rangle &= \frac{1}{N} \sum \limits_{j = 1}^{N} \int_{s}^{t}\int_{\R^d}\left( g(R_k^{N}, V_j^N + u) -  g(R_k^{N},V_k^{N})\right)^2 \psi(R_k^N - R_j^N) \sigma(V_k^N - V_j^N) d\tau a(u)du
  \\ &\leq \frac{C}{N} \sum \limits_{j = 1}^{N} \int \limits_{s}^{t} \left( \langle V_k^N(\tau) \rangle^{\gamma} + \langle V_j^N(\tau) \rangle^{\gamma} \right) d\tau,
 \end{align*}
 which implies, in view of the moment estimates of previous section, $\E^N( \langle M_{s,t}^{N,k} \rangle) \leq C$
 for all $k = 1,\dots, N$ where the constant $C = C(\psi,\sigma,a,g)$ is independent of $N$.
 Using the particular form of $G$ defined in \eqref{EQ:100}, we obtain for the covariation process
 $\langle M_{s,t}^{N,k}, M_{s,t}^{N,j} \rangle = 0$ for all $k \neq j$.
 Hence we conclude from the properties of the processes $\langle M_{s,t}^{N,k} \rangle$ and $\langle M_{s,t}^{N,k}, M_{s,t}^{N,j} \rangle$
 \begin{align*}
  &\ \int \limits_{\mathcal{P}(D(\R_+;\R^{2d}))}|F(\nu)|^2 d\pi^{(N)}(\nu)
  \\ &= \frac{1}{N^2}\sum \limits_{k \neq j} \E^N\left( M_{s,t}^{N,k}M_{s,t}^{N,j}\prod\limits_{l_1 = 1}^{m}g_{l_1}(X_k^N(t_{l_1}))\prod\limits_{l_2 = 1}^{m}g_{l_2}(X_k^N(t_{l_2})) \right)
 \\ &\ \ \  + \frac{1}{N^2}\sum \limits_{k=1}^{N} \E^N \left( ( M_{s,t}^{N,k})^2 \prod\limits_{l = 1}^{m}g_{l}(X_k^N(t_{l}))^2 \right)
 \\ &=  \frac{1}{N^2}\sum \limits_{k \neq j} \E^N\left( \langle M_{s,t}^{N,k}, M_{s,t}^{N,j} \rangle \prod\limits_{l_1 = 1}^{m}g_{l_1}(X_k^N(t_{l_1}))\prod\limits_{l_2 = 1}^{m}g_{l_2}(X_k^N(t_{l_2})) \right)
 \\ &\ \ \  + \frac{1}{N^2}\sum \limits_{k=1}^{N} \E^N \left( \langle M_{s,t}^{N,k} \rangle \prod\limits_{l = 1}^{m}g_{l}(X_k^N(t_{l}))^2 \right)
  \leq \frac{C(\psi,\sigma,a,g,g_1,\dots, g_m)}{N},
 \end{align*}
 which proves the assertion.
\end{proof}
Next we prove that assertion (b) holds.
\begin{Lemma}
 Assertion (b) is satisfied.
\end{Lemma}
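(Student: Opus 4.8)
The plan is to show that the functional $\mu \mapsto |F(\mu)|$ (or, more conveniently, $\mu \mapsto F(\mu)$) is continuous on the support of the limiting measures, so that weak convergence $\pi^{(N)} \to \pi^{(\infty)}$ together with a uniform integrability argument yields the claimed convergence of integrals. The key point is that $F$ is defined by integrating $H(\mu;r,v)$ against $\mu$, and $H$ involves the term $\int_s^t (A(\mu_\tau)g)(r(\tau),v(\tau))\,d\tau$, which depends on $\mu$ both through the outer integration and through the time-marginals $\mu_\tau$ appearing inside $A(\mu_\tau)$. Continuity must therefore be handled carefully because of the nonlinearity and because $A(\mu_\tau)g$ need not be bounded (only $\gamma = 0$ would make it so).

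First I would fix the times $t_1,\dots,t_m,s,t \in D(\pi^{(\infty)})$ and argue that for $\pi^{(\infty)}$-a.a. $\mu$ the map $\nu \mapsto F(\nu)$ is continuous at $\mu$ in the weak topology on $\mathcal{P}(D(\R_+;\R^{2d}))$. For this, since the times lie in $D_\mu$, the coordinate evaluations $(r,v)\mapsto (r(t_j),v(t_j))$, $(r(s),v(s))$, $(r(t),v(t))$ are $\mu$-a.s.\ continuous on the Skorokhod space, so the functions $g_j(r(t_j),v(t_j))$ and $g(r(\cdot),v(\cdot))$ cause no trouble. The delicate term is $\int_s^t (A(\nu_\tau)g)(r(\tau),v(\tau))\,d\tau$; here I would use that $g \in C_c^1$, so $\nabla_r g$ and $\nabla_v g$ are bounded with compact support, and write $A(\nu_\tau)g$ explicitly as in \eqref{EQ:09}. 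The jump part is bounded by $C\|\psi\|_\infty \int \sigma(v-w)\nu_\tau(dq,dw) \leq C\|\psi\|_\infty c_\sigma \big(1 + \langle v\rangle^\gamma\, \|\nu_\tau\|_\gamma\big)$ on the support of $g$, which is integrable in $\tau$ over $[s,t]$ uniformly in $\nu$ near $\mu$ thanks to the moment bounds of Lemma \ref{LEMMA:02} and Corollary \ref{CORR:00}. For convergence of $\nu_\tau \to \mu_\tau$ at a.e.\ $\tau$, I would use that $\tau \mapsto \nu_\tau$ is weakly continuous for $\tau \in D_\nu$ and that $D(\pi^{(\infty)})$ has countable complement, combined with the fact that $(q,w)\mapsto \psi(r-q)\sigma(v-w)$ is continuous and of at most polynomial growth so that weak-plus-moment convergence of $\nu_\tau$ transfers to convergence of the integral against $\nu_\tau$.

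Once continuity of $F$ at $\pi^{(\infty)}$-a.a.\ $\mu$ is established, I would combine it with a uniform integrability estimate: I would show $\sup_N \int |F(\nu)|^{1+\epsilon} d\pi^{(N)}(\nu) < \infty$ for some small $\epsilon > 0$ (or more simply $\sup_N \int |F(\nu)|^2 d\pi^{(N)}(\nu) < \infty$, which is already implicit in the proof of assertion (a), since that integral tends to $0$). This bound, controlled again by the moment estimates of Section 2 applied to the explicit representation $F(\mu^{(N)}) = \frac1N\sum_k M_{s,t}^{N,k}\prod_j g_j(X_k^N(t_j))$, gives uniform integrability of $\{|F(\nu)|\}$ under the sequence $\pi^{(N)}$. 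Then the standard fact that $\pi^{(N)} \to \pi^{(\infty)}$ weakly together with $|F|$ continuous $\pi^{(\infty)}$-a.s.\ and uniformly integrable implies $\int |F| d\pi^{(N)} \to \int |F| d\pi^{(\infty)}$, which is exactly assertion (b).

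The main obstacle is the continuity of the nonlinear drift term $\nu \mapsto \int_s^t (A(\nu_\tau)g)(r(\tau),v(\tau))\,d\tau$: because $A(\nu_\tau)g$ grows like $\langle v\rangle^\gamma$ and depends on $\nu$ through the marginals $\nu_\tau$, one cannot simply invoke boundedness and must instead combine weak convergence of marginals on the full-measure set of good times with the uniform-in-$\nu$ moment bounds near $\mu$ to justify passing to the limit inside the double integral (in $\tau$ and in the Skorokhod variable). Once this is set up — essentially a dominated-convergence argument with the dominating function furnished by Lemma \ref{LEMMA:02} — the remaining steps are routine.
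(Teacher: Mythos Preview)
Your uniform-integrability observation is correct: assertion (a) gives $\sup_N \int |F|^2\, d\pi^{(N)} < \infty$, so $|F|$ is uniformly integrable under the $\pi^{(N)}$. The gap is in the continuity step. You claim that $\nu \mapsto F(\nu)$ is continuous at $\pi^{(\infty)}$-a.a.\ $\mu$ in the weak topology, and justify this by saying the bound $|A(\nu_\tau)g(r,v)| \le C(\langle v\rangle^\gamma + \|\nu_\tau\|_\gamma)$ is ``integrable in $\tau$ uniformly in $\nu$ near $\mu$ thanks to the moment bounds of Lemma~\ref{LEMMA:02} and Corollary~\ref{CORR:00}''. But those bounds are only \emph{in expectation} under $\pi^{(N)}$ or $\pi^{(\infty)}$; they give no pointwise control on $\|\nu_\tau\|_\gamma$ for a weakly convergent sequence $\nu^{(n)}\to\mu$. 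In the weak topology every neighbourhood of $\mu$ contains measures with arbitrarily large $\gamma$-moments, so neither $\nu\mapsto\|\nu_\tau\|_\gamma$ nor $\nu\mapsto\int\langle v(\tau)\rangle^\gamma\,\nu(dr,dv)$ is continuous, and hence $F$ is genuinely discontinuous at every point (for $\gamma>0$). Skorokhod representation does not rescue this: even though $Y_N\to Y$ a.s.\ with laws $\pi^{(N)},\pi^{(\infty)}$, the expectation bound $\sup_N \E[\|(Y_N)_\tau\|_{2p}]<\infty$ does not imply $\sup_N \|(Y_N(\omega))_\tau\|_{2p}<\infty$ for a.a.\ $\omega$, which is what you would need to push the dominated-convergence argument through pathwise.

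The paper avoids this by an explicit double truncation rather than a continuity-plus-UI argument. First it replaces $A(\nu)$ by $A_R(\nu)$ with a smooth cutoff $\varphi(|w|^2/R^2)$ in the $w$-integration; this makes the integrand bounded in $w$ and hence $\nu\mapsto A_R(\nu_\tau)g(r,v)$ genuinely weakly continuous in $\nu$, while the error $|F-F_R|$ is controlled by $R^{-1/2}\int_s^t\|\mu_\tau\|_{2\gamma+1}\,d\tau$, whose $\pi^{(N)}$- and $\pi^{(\infty)}$-expectations are bounded uniformly in $N$ by the moment estimates. Second, since $A_R(\nu_\tau)g(r,v)$ still grows like $\langle v\rangle^\gamma$ in the path variable, it inserts a further cutoff $\varphi(\sup_{\tau\in[s,t]}\langle v(\tau)\rangle^2/m^2)$ in the outer $\mu$-integration; the resulting $F_{R,m}^1$ is bounded and jointly continuous so that weak convergence of $\pi^{(N)}$ applies directly, and the remainder $F_{R,m}^2$ is $O(1/m)$ in $L^1(\pi^{(N)}+\pi^{(\infty)})$ by Corollary~\ref{CORR:00}. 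This two-level truncation is precisely the missing ingredient that converts the expectation-level moment bounds into a workable approximation by bounded continuous functionals; your sketch implicitly assumes it can be bypassed, and that is where the argument breaks.
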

\begin{proof}
 Take $\varphi \in C^{\infty}(\R_+)$ with $\1_{[0,1]} \leq \varphi \leq \1_{[0,2]}$.
 For $R > 0$ and $\nu \in \mathcal{P}(\R^{2d})$ let
 \begin{align*}
  (A_R(\nu)g)(r,v) &= v \cdot (\nabla_r g)(r,v) 
  \\ &\ + \int \limits_{\R^{2d} \times \R^d} \varphi\left( \frac{|w|^2}{R^2}\right) \left( g(r, w + u) - g(r,v) \right) \psi(r-q)\sigma(v-w)\nu(dq,dw)a(u)du.
 \end{align*}
 Then it is not difficult to see that 
 \[
  \mathcal{P}(\R^{2d}) \times \R^{2d} \ni (\nu,r,v) \longmapsto (A_R(\nu)g)(r,v)
 \]
 is jointly continuous where $\mathcal{P}(\R^{2d})$ is endowed with the topology of weak convergence. 
 Moreover one can show that for some constant $C = C(\psi, \sigma, a,g)$
 \begin{align}\label{EQ:25}
  |A_R(\nu)g(r,v) - A(\nu)g(r,v)| \leq C \int \limits_{\R^{2d}} \1_{ \{ |w| > R \} } \sigma(v-w)d\nu(q,w) \leq \frac{C}{R^{1/2}}\| \nu \|_{\gamma + \frac{1}{2}} \langle v \rangle^{\gamma}.
 \end{align}
 Let $H_R$ be defined by \eqref{CS:00} with $A$ replaced by $A_R$ and define $F_R(\mu)$ by \eqref{EQ:24} with $H$ replaced by $H_R$. 
 Then we obtain
 \begin{align*}
  &\ \left| \int \limits_{\mathcal{P}(D(\R_+;\R^{2d}))}|F(\mu)| d\pi^{(N)}(\mu) - \int \limits_{\mathcal{P}(D(\R_+;\R^{2d}))}|F(\mu)| d\pi^{(\infty)}(\mu) \right|
  \\ &\leq  \int \limits_{\mathcal{P}(D(\R_+;\R^{2d}))}|F(\mu) - F_R(\mu)| d\pi^{(N)}(\mu) 
 + \int \limits_{\mathcal{P}(D(\R_+;\R^{2d}))}|F_R(\mu) - F(\mu)| d\pi^{(\infty)}(\mu) 
  \\ &\ \ \ + \left| \int \limits_{\mathcal{P}(D(\R_+;\R^{2d}))}|F_R(\mu)| d\pi^{(N)}(\mu) - \int \limits_{\mathcal{P}(D(\R_+;\R^{2d}))}|F_R(\mu)| d\pi^{(\infty)}(\mu) \right|
  \\ &= I_1 + I_2.
 \end{align*}
 Using \eqref{EQ:25} we obtain for $T > t$ and some constant $C = C(g, g_1,\dots, g_m, \psi, \sigma, a)$
 \begin{align*}
  |F(\mu) - F_R(\mu)| &\leq C \int \limits_{s}^{t}\int \limits_{D(\R_+;\R^{2d})} | (A(\mu_{\tau})g)(r(\tau),v(\tau)) - (A_R(\mu_{\tau})g)(r(\tau),v(\tau))| \mu(dr,dv)d\tau
  \\ &\leq \frac{C}{R^{1/2}}\int \limits_{s}^{t} \| \mu_{\tau} \|_{\gamma + \frac{1}{2}} \| \mu_{\tau} \|_{\gamma} d\tau 
        \leq \frac{C}{R^{1/2}} \int \limits_{s}^{t} \| \mu_{\tau}\|_{2\gamma + 1} d\tau,
 \end{align*}
 where we have used that $\| \mu_{\tau} \|_{\gamma + \frac{1}{2}} \| \mu_{\tau} \|_{\gamma} \leq \| \mu_{\tau}\|_{\gamma + \frac{1}{2}}^2
 \leq \| \mu_{\tau} \|_{2\gamma + 1}$. Using the moment estimates from Corollary \ref{CORR:00} and Lemma \ref{LEMMA:02}
 we find a constant $C > 0$ such that $\sup_{N \geq 1}I_1  \leq C R^{-1/2}$.
 Hence it remains to prove that $I_2 \longrightarrow 0$ as $N \to \infty$ for any fixed $R > 0$.

 Fix $R > 0$ and recall that $\varphi$ is a smooth function on $\R_+$ satisfying $\1_{[0,1]} \leq \varphi \leq \1_{[0,2]}$.
 Define
 \begin{align*}
  H_{R,m}^1(\mu;x) &:= \varphi \left( \frac{\sup \limits_{\tau \in [s,t]} \langle v(\tau) \rangle^2}{m^2} \right) H_{R}(\mu;x),
  \\  H_{R,m}^2(\mu;x) &:= \left( 1 - \varphi \left( \frac{\sup\limits_{\tau \in [s,t]} \langle v(\tau) \rangle^2}{m^2}\right ) \right) H_R(\mu;x),
 \end{align*}
 and let $F_{R,m}^j$ be given by \eqref{EQ:24} with $H$ replaced by $H_{R,m}^j$, $j = 1,2$. Then we obtain $I_2 \leq J_1 + J_2$, where
 \begin{align*}
 J_j &= \left| \int \limits_{\mathcal{P}(D(\R_+;\R^{2d}))} |F_{R,m}^j(\mu)| d\pi^{(N)}(\mu) - \int \limits_{\mathcal{P}(D(\R_+;\R^{2d}))}|F_{R,m}^j(\mu)|d\pi^{(\infty)}(\mu) \right|, \ \ \ j = 1,2.
 \end{align*}
 For any $N,m \geq 1$ and $\mu \in \mathcal{P}(D(\R_+;\R^{2d}))$ we obtain for some constant $C$ independent of $N$
 \begin{align*}
  |F_{R,m}^2(\mu)| &\leq \int \limits_{D(\R_+;\R^{2d})} \1_{ \left\{ \sup \limits_{\tau \in [s,t]} \langle v(\tau) \rangle > m \right\} } |H_R(\mu;r,v)| \mu(dr,dv)
 \\ &\leq C \int \limits_{D(\R_+;\R^{2d})} \int \limits_{s}^t \1_{ \left\{ \sup \limits_{\tau \in [s,t]} \langle v(\tau) \rangle > m \right\} } \langle v(\tau) \rangle^{\gamma} d\tau \mu(dr,dv)
 \\ &\leq \frac{C}{m} \int \limits_{D(\R_+; \R^{2d})} \sup \limits_{\tau \in [s,t]} \langle v(\tau) \rangle^{1+ \gamma} \mu(dr,dv).
 \end{align*}
 The moment estimates from Corollary \ref{CORR:00} and a similar application of the Lemma of Fatou as in Lemma \ref{LEMMA:02} gives
 \[
  J_2 \leq \frac{C}{m} \int \limits_{\mathcal{P}(D(\R_+; \R^{2d}))} \int \limits_{D(\R_+;\R^{2d})} \sup \limits_{\tau \in [s,t]} \langle v(\tau) \rangle^{1+ \gamma} \mu(dr,dv) d( \pi^{(N)} + \pi^{(\infty)})(\mu)
  \leq \frac{C}{m} < \infty.
 \]
 Hence it suffices to show that $J_1 \longrightarrow 0$ as $N \to \infty$ for each fixed $R,m$.

 Note that $H_{R,m}^{1}$ is bounded and jointly continuous in $(\mu,r,v)$.
 Hence $F_{R,m}^1$ is continuous and bounded on $\mathcal{P}(D(\R_+;\R^{2d})) )$.
 Using the weak convergence $\pi^{(N)} \longrightarrow \pi^{(\infty)}$ as $N \to \infty$ we conclude that 
 also $J_1 \longrightarrow 0$ as $N \to \infty$, for each fixed $R,m$. 
 \end{proof}

\section{Uniqueness for bounded coefficients}
In this section we study uniqueness for the nonlinear martingale problem $(A,C_c^1(\R^{2d}), \mu_0)$ in the case where $\sigma$ is bounded, i.e. $\gamma = 0$.
The following is our main result in this case.
\begin{Theorem}
 Suppose that $\gamma = 0$. Then for each $\mu_0 \in \mathcal{P}(\R^{2d})$ there exists at most one solution to the 
 nonlinear martingale problem $(A,C_c^1(\R^{2d}), \mu_0)$.
 In particular, there exists at most one weak solution to the mean-field SDE \eqref{EQ:07}.
\end{Theorem}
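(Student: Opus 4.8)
The plan is to establish uniqueness of the time-marginals $(\mu_t)_{t \geq 0}$ of any solution to the nonlinear martingale problem, and then to upgrade this to uniqueness of the full law on path space via a standard argument: once the flow $t \mapsto \mu_t$ is fixed, the process solves a \emph{linear} (time-inhomogeneous) martingale problem with bounded generator $A(\mu_t)$, for which well-posedness is classical (cf.\ \cite{EK86}), and hence the law on $D(\R_+;\R^{2d})$ is uniquely determined; the assertion about the mean-field SDE then follows from Lemma \ref{LEMMA:03}. So the heart of the matter is a stability estimate for the time-marginals in a suitable distance.

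First I would work with the representation of a solution as a weak solution $(R,V)$ to the mean-field SDE \eqref{EQ:07}, which is available by Lemma \ref{LEMMA:03}(b). Since $\gamma = 0$, the map $\sigma$ is bounded, and the only unbounded object left is the jump size $v + w_s(\eta) - V(s-)$; the moment bounds from Theorem \ref{TH:01} (with $2p = 4$, using $\int |v|^4 d\mu_0 < \infty$) control this. Given two solutions $\mu$ and $\widetilde\mu$ with the same initial law $\mu_0$, I would realise them on a common stochastic basis driven by the \emph{same} Poisson random measure $\mathcal{N}$ and with the same initial condition, obtaining coupled processes $(R,V)$ and $(\widetilde R, \widetilde V)$ whose marginal laws agree with $\mu_s$ and $\widetilde\mu_s$ respectively. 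Then I would estimate a quantity like $\E(|V(t) - \widetilde V(t)|) + \E(|R(t) - \widetilde R(t)|)$, or rather its truncated version, applying the It\^o formula to the jump SDE. The jump terms split into (i) a part controlled by $|V(s-) - \widetilde V(s-)|$ and $|R(s) - \widetilde R(s)|$ through the Lipschitz-type dependence of the indicator thresholds $\1_{[0,\psi(\cdot)\sigma(\cdot)]}$ on their arguments, which requires \emph{local} Lipschitz continuity of $\psi$ and $\sigma$ — here one should instead pass through a Wasserstein-type bound on $d(\mu_s, \widetilde\mu_s)$ rather than a pathwise coupling, since $\psi, \sigma$ are only assumed continuous; and (ii) a part measuring the difference between the laws $\mu_s$ and $\widetilde\mu_s$ entering the rates and the jump targets. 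The cleanest route is therefore to set up a Wasserstein contraction: bound $W_1(\mu_t, \widetilde\mu_t)$ (or a weighted variant accounting for the quadratic growth of jumps) by $C \int_0^t W_1(\mu_s, \widetilde\mu_s)\,ds$ plus terms that vanish because the initial data coincide, then close with Gronwall.

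The main obstacle I anticipate is handling the dependence of the jump \emph{rate} $\psi(R(s)-r_s(\eta))\sigma(V(s-)-w_s(\eta))$ on the law through $(r_s(\eta), w_s(\eta))$ when $\psi$ and $\sigma$ are merely continuous and bounded, not Lipschitz: a naive coupling of the two Poisson-driven equations does not immediately give a contraction because one cannot Lipschitz-bound the difference of the two indicator functions. One standard device is to rewrite the jump mechanism so that the law enters only through integration against $\nu(dq,dw)$ — i.e.\ exploit that $A(\nu)g$ depends on $\nu$ linearly and, for bounded $\psi,\sigma$, continuously in total variation of $\nu$ — and then derive a closed Gronwall inequality for the total variation distance $\|\mu_t - \widetilde\mu_t\|_{TV}$ of the time-marginals directly from the martingale formulation, testing against $g \in C_c^1$ and using a duality/semigroup comparison; the boundedness of $\sigma$ (the case $\gamma = 0$) is exactly what makes $A(\nu)$ map into bounded functions, so the generator estimates needed for this comparison are available. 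I expect this total-variation (or coupled-Wasserstein) Gronwall argument, together with the finiteness of the relevant moments from Theorem \ref{TH:01}, to be precisely the content of the detailed proof; the upgrade from marginals to path-space law and to the SDE is then routine.
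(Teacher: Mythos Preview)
Your proposal eventually lands on the paper's approach: prove a Gronwall inequality for the total variation distance of the time-marginals using that $A(\nu)g$ is bounded (since $\gamma=0$) and linear in $\nu$, then upgrade to path-space uniqueness via the linear time-inhomogeneous martingale problem. The paper does exactly this, making the argument precise by passing to the mild formulation
\[
\langle g,\mu_t\rangle = \langle S(t)g,\mu_0\rangle + \int_0^t \langle \mathcal{A}S(t-s)g,\mu_s\otimes\mu_s\rangle\,ds,
\qquad S(\tau)g(r,v)=g(r+\tau v,v),
\]
which is your ``duality/semigroup comparison''; the TV bound $\|\mu_t-\nu_t\|_{\mathrm{TV}}\le \|\mu_0-\nu_0\|_{\mathrm{TV}}\exp(4\|\psi\|_\infty\|\sigma\|_\infty t)$ then drops out immediately.

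Two remarks on your detours. First, your appeal to moment bounds from Theorem \ref{TH:01} (requiring $\int|v|^4\,d\mu_0<\infty$) is extraneous and in fact contradicts the statement you are proving, which asserts uniqueness for \emph{every} $\mu_0\in\mathcal{P}(\R^{2d})$; the paper's TV argument needs no moments at all since every constant appearing is bounded by $\|\psi\|_\infty\|\sigma\|_\infty$. Second, the pathwise coupling and Wasserstein discussion is a dead end here for the reason you correctly identify (no Lipschitz assumption on $\psi,\sigma$), and the paper bypasses it entirely; so you should cut that paragraph and go straight to the mild formulation and TV Gronwall.
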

The proof of this theorem is deduced from the following considerations. 
Given any solution $\mu$ to the nonlinear martingale problem $(A,C_c^1(\R^{2d}), \mu_0)$, then by taking expectations in \eqref{EQ:36} we see that 
its time-marginals $(\mu_t)_{t \geq 0}$ satisfy the nonlinear Fokker-Planck equation
\begin{align}\label{FPE}
 \langle g, \mu_t \rangle = \langle g, \mu_0 \rangle + \int \limits_{0}^{t}\langle A(\mu_s)g, \mu_s \rangle, \ \ t \geq 0, \ \ g \in C_c^1(\R^{2d}),
\end{align}
where $A(\mu_s)$ was defined in \eqref{EQ:09}. 
Then we prove uniqueness for \eqref{FPE}. Based on this uniqueness result, 
it suffices to study the corresponding linearized martingale problem where $(\mu_t)_{t \geq 0}$ appearing in the argument of $A(\mu_t)$
can be regarded as a fixed parameter. 
Uniqueness for the latter (time-inhomogeneous) martingale problem follows classically by uniqueness of its time-marginals.

\subsection{Uniqueness for the time-marginals}
In this section we study uniqueness and stability for the time-marginals, i.e. solutions to \eqref{FPE}.
More precisely, we prove an a priori bound for any two solutions to \eqref{FPE} with respect to the total variation distance
\[
 \| \mu - \nu \|_{\mathrm{TV}} = \sup \left\{  \langle g, \mu - \nu \rangle \ : \ g \in B(\R^{2d}), \ \ \| g \|_{\infty} \leq 1 \right\},
\]
where $B(\R^{2d})$ denotes the space of all bounded measurable functions on $\R^{2d}$.
The proof of such bound relies on a mild formulation of \eqref{FPE} described below.
\begin{Lemma}
 Let $(\mu_t)_{t \geq 0} \subset \mathcal{P}(\R^{2d})$ be given. Then $(\mu_t)_{t \geq 0}$ satisfies \eqref{FPE} if and only if
 \begin{align}\label{EQ:30}
  \langle g, \mu_t \rangle = \langle S(t)g, \mu_0\rangle + \int \limits_{0}^{t}\langle \mathcal{A}S(t-s)g, \mu_s \otimes \mu_s \rangle ds
 \end{align}
 holds for all $g \in C^1_c(\R^{2d})$, where $S(t-s)g(r,v) = g(r + (t-s)v,v)$ and
 \begin{align}\label{CS:01}
  (\mathcal{A}g)(r,v;q,w) = \psi(r-q)\sigma(v-w) \int \limits_{\R^{2d}}\left( g(r,w+u) - g(r,v)\right)a(u)du.
 \end{align}
 Moreover, \eqref{EQ:30} naturally extends to all $g \in B(\R^{2d})$.
\end{Lemma}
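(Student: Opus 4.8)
The plan is to recognise \eqref{EQ:30} as the Duhamel (variation-of-constants) reformulation of \eqref{FPE} with respect to the free-transport semigroup $S(\rho)g(r,v)=g(r+\rho v,v)$, whose generator on $C_c^1(\R^{2d})$ is $\mathcal{L}_0g(r,v)=v\cdot(\nabla_rg)(r,v)$; indeed $A(\nu)g=\mathcal{L}_0g+\int_{\R^{2d}}(\mathcal{A}g)(\cdot\,;q,w)\nu(dq,dw)$, so $\langle A(\nu)g,\nu\rangle=\langle\mathcal{L}_0g,\nu\rangle+\langle\mathcal{A}g,\nu\otimes\nu\rangle$. I first record the elementary facts that $S(\rho)$ maps $C_c^1(\R^{2d})$ into itself, that $\rho\mapsto S(\rho)g$ is continuously differentiable with $\partial_\rho S(\rho)g=\mathcal{L}_0S(\rho)g=S(\rho)\mathcal{L}_0g$, and — since $\mathcal{A}$ acts on a function only through its values at $(r,w+u)$ and $(r,v)$ — that $\partial_\rho(\mathcal{A}S(\rho)g)=\mathcal{A}(\mathcal{L}_0S(\rho)g)$, all of these being bounded locally uniformly in $\rho$ because $\nabla_rg$ has compact support. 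Since $\mathcal{L}_0g$ is only continuous and not $C^1$, the last clause of the lemma is needed inside the proof of the converse, so I establish it first: both sides of \eqref{EQ:30} are linear in $g$ and, for $\|g\|_\infty\le1$, dominated by $1+\int_0^t2\|\psi\|_\infty\langle\sigma(v-w),\mu_s\otimes\mu_s\rangle\,ds$, which is finite since $\int\int\sigma(v-w)\mu_s(dq,dw)\mu_s(dr,dv)\le 2^{\gamma/2}c_\sigma\|\mu_s\|_\gamma^2<\infty$ by \eqref{EQ:16} and the standing moment bound on time-marginals of solutions ($\sigma$ being bounded when $\gamma=0$). As $C_c^1(\R^{2d})$ is an algebra generating the Borel $\sigma$-field, the functional monotone class theorem together with dominated convergence applied to each of the three terms upgrades \eqref{EQ:30} from $C_c^1(\R^{2d})$ to all bounded Borel $g$.

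\textbf{\eqref{FPE} $\Rightarrow$ \eqref{EQ:30}.} Fix $t>0$ and $g\in C_c^1(\R^{2d})$, and consider the time-dependent test function $\phi_s:=S(t-s)g\in C_c^1(\R^{2d})$, $s\in[0,t]$. A routine Riemann-sum approximation in $s$ — using that $s\mapsto\langle\beta,\mu_s\rangle$ is absolutely continuous for each fixed $\beta\in C_c^1(\R^{2d})$ with density $\langle A(\mu_s)\beta,\mu_s\rangle$, that $\partial_s\phi_s$ is bounded, jointly continuous and compactly supported in $(r,v)$ uniformly in $s$, and the standing moment bounds to control tails — yields
\[
\langle\phi_t,\mu_t\rangle=\langle\phi_0,\mu_0\rangle+\int_0^t\langle\partial_s\phi_s+A(\mu_s)\phi_s,\mu_s\rangle\,ds.
\]
Here $\partial_s\phi_s=-\mathcal{L}_0S(t-s)g$ while $A(\mu_s)\phi_s=\mathcal{L}_0S(t-s)g+\int_{\R^{2d}}(\mathcal{A}S(t-s)g)(\cdot\,;q,w)\mu_s(dq,dw)$, so the two transport terms cancel and, by Fubini, $\langle\partial_s\phi_s+A(\mu_s)\phi_s,\mu_s\rangle=\langle\mathcal{A}S(t-s)g,\mu_s\otimes\mu_s\rangle$. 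Since $\phi_t=g$ and $\phi_0=S(t)g$, this is exactly \eqref{EQ:30}.

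\textbf{\eqref{EQ:30} $\Rightarrow$ \eqref{FPE}.} Apply the extended \eqref{EQ:30} to the bounded continuous test function $\mathcal{L}_0g$: for every $\sigma\ge0$,
\[
\langle\mathcal{L}_0g,\mu_\sigma\rangle=\langle S(\sigma)\mathcal{L}_0g,\mu_0\rangle+\int_0^\sigma\langle\mathcal{A}S(\sigma-s)\mathcal{L}_0g,\mu_s\otimes\mu_s\rangle\,ds.
\]
Writing $\Psi(\sigma,s):=\langle\mathcal{A}S(\sigma-s)g,\mu_s\otimes\mu_s\rangle$ and using $S(\sigma)\mathcal{L}_0g=\partial_\sigma S(\sigma)g$ together with $\langle\mathcal{A}S(\sigma-s)\mathcal{L}_0g,\mu_s\otimes\mu_s\rangle=\partial_\sigma\Psi(\sigma,s)$ (differentiation under the $a(u)du$- and $\mu_s\otimes\mu_s$-integrals being licensed by the bounds above), I integrate over $\sigma\in[0,t]$ and apply Fubini on the triangle $\{0\le s\le\sigma\le t\}$:
\[
\int_0^t\langle\mathcal{L}_0g,\mu_\sigma\rangle\,d\sigma=\langle S(t)g,\mu_0\rangle-\langle g,\mu_0\rangle+\int_0^t\big(\Psi(t,s)-\Psi(s,s)\big)\,ds.
\]
Since $\Psi(t,s)=\langle\mathcal{A}S(t-s)g,\mu_s\otimes\mu_s\rangle$ and $\Psi(s,s)=\langle\mathcal{A}g,\mu_s\otimes\mu_s\rangle$, invoking \eqref{EQ:30} for $g$ itself gives $\langle S(t)g,\mu_0\rangle+\int_0^t\Psi(t,s)\,ds=\langle g,\mu_t\rangle$, and therefore
\[
\langle g,\mu_t\rangle-\langle g,\mu_0\rangle=\int_0^t\big(\langle\mathcal{L}_0g,\mu_\sigma\rangle+\langle\mathcal{A}g,\mu_\sigma\otimes\mu_\sigma\rangle\big)\,d\sigma=\int_0^t\langle A(\mu_\sigma)g,\mu_\sigma\rangle\,d\sigma,
\]
which is \eqref{FPE}.

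\textbf{Main obstacle.} Conceptually the statement is just Duhamel's formula for the transport semigroup, and the commutation identities for $S$, $\mathcal{L}_0$ and $\mathcal{A}$ are immediate from the explicit form $S(\rho)g(r,v)=g(r+\rho v,v)$. The only genuine work is bookkeeping: the monotone-class extension to $B(\R^{2d})$ and the justification of the two interchanges (differentiation under the integral sign in $\sigma$ and the Fubini step), all of which reduce to the finiteness of $\int\int\sigma(v-w)\,\mu_s(dq,dw)\mu_s(dr,dv)$ guaranteed by the moment bound on the time-marginals. I expect the approximation argument underlying the time-dependent version of \eqref{FPE} in the first direction to be the fiddliest point.
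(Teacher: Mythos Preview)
Your proposal is correct and is exactly the standard Duhamel argument the paper has in mind; the paper's own proof is a two-line sketch that records the bound $|\mathcal{A}S(t-s)g(r,v;q,w)|\le 2\|g\|_\infty\|\psi\|_\infty\|\sigma\|_\infty$ (recall this lemma sits in the section where $\gamma=0$, so $\sigma$ is bounded) and then defers to ``dominated convergence and standard density arguments''. Your write-up simply fills in those standard details --- time-dependent test functions for one implication, applying the extended mild formulation to $\mathcal{L}_0g$ and integrating for the other, monotone class for the extension to $B(\R^{2d})$ --- so there is nothing to compare.
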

\begin{proof}
 Observe that for $g \in B(\R^{2d})$ we have
 \begin{align}\label{EQ:31}
  | \mathcal{A}S(t-s)g(r,v;q,w) | \leq 2 \| g \|_{\infty} \| \psi \|_{\infty} \| \sigma\|_{\infty}.
 \end{align}
 The assertion can be shown by dominated convergence and standard density arguments.
\end{proof}
The following is our main estimate for solutions to \eqref{FPE}.
\begin{Theorem}\label{UNIQUENESS:FPE:BOUNDED}
 Suppose that $\gamma = 0$ and let $(\mu_t)_{t \geq 0}$ and $(\nu_t)_{t \geq 0}$ be two solutions to \eqref{FPE}. Then 
 \[
  \| \mu_t - \nu_t \|_{\mathrm{TV}} \leq \| \mu_0 - \nu_0 \|_{\mathrm{TV}} \exp\left( 4 \| \psi \|_{\infty} \| \sigma \|_{\infty} t \right), \ \ t \geq 0.
 \] 
\end{Theorem}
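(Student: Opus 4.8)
The natural approach is to work with the mild formulation \eqref{EQ:30}, test against an arbitrary $g \in B(\R^{2d})$ with $\|g\|_\infty \le 1$, and estimate the difference $\langle g, \mu_t\rangle - \langle g, \nu_t\rangle$ directly, then take the supremum over such $g$ and close via Gronwall. Since the semigroup $S(t)$ is a deterministic transport which maps the unit ball of $B(\R^{2d})$ into itself (it only shifts the position coordinate), the term $\langle S(t)g, \mu_0 - \nu_0\rangle$ is bounded by $\|\mu_0-\nu_0\|_{\mathrm{TV}}$. The work is in the integral term, where one must compare $\langle \mathcal{A}S(t-s)g, \mu_s\otimes\mu_s\rangle$ with $\langle \mathcal{A}S(t-s)g, \nu_s\otimes\nu_s\rangle$.

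First I would write, for each fixed $s$, the telescoping identity
\[
 \mu_s\otimes\mu_s - \nu_s\otimes\nu_s = (\mu_s - \nu_s)\otimes\mu_s + \nu_s\otimes(\mu_s-\nu_s),
\]
so that the integrand splits into two pieces, each of which integrates $\mathcal{A}S(t-s)g$ against a product of a signed measure of total variation at most $\|\mu_s-\nu_s\|_{\mathrm{TV}}$ and a probability measure. Next, I would use the uniform bound \eqref{EQ:31}, namely $|\mathcal{A}S(t-s)g(r,v;q,w)| \le 2\|\psi\|_\infty\|\sigma\|_\infty$ for $\|g\|_\infty \le 1$, which is exactly where $\gamma = 0$ (boundedness of $\sigma$) is essential. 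Integrating the first piece in the $(q,w)$ variable against the probability measure $\mu_s$ leaves a function of $(r,v)$ bounded by $2\|\psi\|_\infty\|\sigma\|_\infty$, which integrated against the signed measure $\mu_s - \nu_s$ contributes at most $2\|\psi\|_\infty\|\sigma\|_\infty\|\mu_s-\nu_s\|_{\mathrm{TV}}$; the second piece is handled symmetrically, so the whole integrand at time $s$ is bounded by $4\|\psi\|_\infty\|\sigma\|_\infty\|\mu_s-\nu_s\|_{\mathrm{TV}}$.

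Putting this together with the initial term gives
\[
 \|\mu_t - \nu_t\|_{\mathrm{TV}} \le \|\mu_0-\nu_0\|_{\mathrm{TV}} + 4\|\psi\|_\infty\|\sigma\|_\infty \int_0^t \|\mu_s-\nu_s\|_{\mathrm{TV}}\, ds,
\]
and Gronwall's lemma yields the claimed exponential bound. The only mild technical point to check is that $t \mapsto \|\mu_t-\nu_t\|_{\mathrm{TV}}$ is measurable (or at least that the integral inequality makes sense), which follows since $t \mapsto \langle g,\mu_t\rangle$ is continuous for each fixed $g \in C_c^1$ by \eqref{FPE} and one can realize the total variation norm as a countable supremum over a suitable separating family; I would dispatch this in a line. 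I do not anticipate a genuine obstacle here — the argument is a clean fixed-point/Gronwall estimate, and the boundedness hypothesis $\gamma = 0$ is precisely what makes the kernel $\mathcal{A}S(t-s)g$ uniformly bounded so that the total variation norm is preserved under the nonlinear map. The analogous estimate for $\gamma \in (0,2]$, where this uniform bound fails, is presumably what forces the more delicate Wasserstein-type analysis of Section 5.
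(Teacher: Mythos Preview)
Your proposal is correct and follows essentially the same approach as the paper: use the mild formulation \eqref{EQ:30}, telescope $\mu_s\otimes\mu_s - \nu_s\otimes\nu_s$ into two pieces each involving $\mu_s-\nu_s$, apply the uniform bound \eqref{EQ:31}, and close with Gronwall. The only cosmetic difference is that the paper telescopes as $\mu_s\otimes(\mu_s-\nu_s) + (\mu_s-\nu_s)\otimes\nu_s$ rather than your $(\mu_s-\nu_s)\otimes\mu_s + \nu_s\otimes(\mu_s-\nu_s)$, which of course makes no difference to the estimate.
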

\begin{proof}
 Let $g \in B(\R^{2d})$ be such that $\| g\|_{\infty} \leq 1$. Then, by \eqref{EQ:30},
 \begin{align*}
  \langle g, \mu_t - \nu_t\rangle &= \langle S(t)g, \mu_0 - \nu_0 \rangle + \int \limits_{0}^{t}\langle \mathcal{A}S(t-s)g, \mu_s \otimes \mu_s - \nu_s \otimes \nu_s \rangle ds
  \\ &= \langle S(t)g, \mu_0 - \nu_0 \rangle + \int \limits_{0}^{t} \langle \mathcal{A}S(t-s)g, \mu_s \otimes (\mu_s - \nu_s) \rangle ds
      \\ &\ \ \ + \int \limits_{0}^{t} \langle \mathcal{A}S(t-s)g, (\mu_s - \nu_s)\otimes \nu_s \rangle ds
  \\ &\leq \| \mu_0 - \nu_0 \|_{\mathrm{TV}} + 4 \| \psi \|_{\infty} \| \sigma \|_{\infty} \int \limits_{0}^{t} \| \mu_s - \nu_s \|_{\mathrm{TV}} ds,
 \end{align*}
 where we have used $\| S(t)g \|_{\infty} \leq 1$ and \eqref{EQ:31} to obtain 
 \begin{align*}
  \langle \mathcal{A}S(t-s)g, (\mu_s - \nu_s)\otimes \nu_s \rangle
  &\leq \sup \limits_{(q,w) \in \R^{2d}} \int \limits_{\R^{2d}} (\mathcal{A}S(t-s)g)(r,v;q,w) (\mu_s - \nu_s)(dr,dv)
  \\ &\leq 2 \| \psi \|_{\infty} \| \sigma \|_{\infty} \| \mu_s - \nu_s \|_{\mathrm{TV}}.
 \end{align*}
 Similarly one can show that
 \[
  \langle \mathcal{A}S(t-s)g, (\mu_s - \nu_s)\otimes \nu_s \rangle \leq 2 \| \psi \|_{\infty} \| \sigma \|_{\infty} \| \mu_s - \nu_s \|_{\mathrm{TV}}.
 \]
 Taking the supremum over all $g \in B(\R^{2d})$ with $\| g\|_{\infty} \leq 1$ and then applying the Gronwall lemma yields the assertion.
\end{proof}

\subsection{Uniqueness in law for the Vlasov-McKean equation}
Below we prove that the nonlinear martingale problem $(A,C_c^1(\R^{2d}), \mu_0)$ has at most one solution.
\begin{Proposition}
 Suppose that $\gamma = 0$ and let $\mu_0 \in \mathcal{P}(\R^{2d})$. 
 Then there exists at most one solution $\mu \in \mathcal{P}(D(\R_+;\R^{2d}))$ to the nonlinear martingale problem $(A,C_c^1(\R^{2d}), \mu_0)$.
\end{Proposition}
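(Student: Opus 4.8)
The plan is to reduce uniqueness in law for the nonlinear martingale problem to uniqueness for a \emph{linear} (time-inhomogeneous) martingale problem, using Theorem~\ref{UNIQUENESS:FPE:BOUNDED} as the crucial input. Suppose $\mu$ and $\widetilde\mu$ are two solutions to $(A, C_c^1(\R^{2d}), \mu_0)$. Taking expectations in \eqref{EQ:36} shows that their time-marginals $(\mu_t)_{t\ge 0}$ and $(\widetilde\mu_t)_{t\ge 0}$ both solve the nonlinear Fokker-Planck equation \eqref{FPE}. Since $\gamma = 0$, Theorem~\ref{UNIQUENESS:FPE:BOUNDED} applies with identical initial data $\mu_0 = \widetilde\mu_0$ and yields $\|\mu_t - \widetilde\mu_t\|_{\mathrm{TV}} = 0$ for all $t \ge 0$; that is, $\mu_t = \widetilde\mu_t$ for every $t$. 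Hence both $\mu$ and $\widetilde\mu$ are solutions to \emph{the same} linear martingale problem: fix the (now uniquely determined) curve $(\mu_t)_{t\ge 0}$ and consider the time-dependent generator $L_t := A(\mu_t)$ acting on $C_c^1(\R^{2d})$, i.e. the generator with the ``frozen'' coefficient. Both $\mu$ and $\widetilde\mu$ satisfy \eqref{EQ:36} with $A(\mu_s)$ replaced by this $L_s$, because the time-marginals entering $A$ coincide for both.

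The remaining task is therefore: the time-inhomogeneous linear martingale problem for $(L_t)_{t\ge 0}$ with domain $C_c^1(\R^{2d})$ and initial law $\mu_0$ has at most one solution. Here $L_t$ has the explicit form
\[
 (L_t g)(r,v) = v\cdot(\nabla_r g)(r,v) + \int_{\R^{3d}} \big( g(r,w+u) - g(r,v)\big)\psi(r-q)\sigma(v-w)\,\mu_t(dq,dw)\,a(u)\,du,
\]
which, since $\gamma = 0$, maps $C_c^1(\R^{2d})$ into $C_b(\R^{2d})$ (the jump part is bounded by $2\|\psi\|_\infty\|\sigma\|_\infty\|g\|_\infty$ and the transport part is the standard free-streaming term). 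The standard route (as in Ethier-Kurtz \cite{EK86}, Chapter~4) is that uniqueness of the one-dimensional time-marginals of a martingale-problem solution implies uniqueness of the solution itself, once one knows the time-marginals satisfy the corresponding (now genuinely linear) forward equation with a uniqueness property. So I would first record that any solution $Q$ of the linear martingale problem has time-marginals $Q_t$ solving the \emph{linear} Fokker-Planck equation $\langle g, Q_t\rangle = \langle g,\mu_0\rangle + \int_0^t \langle L_s g, Q_s\rangle ds$; then show this linear equation has a unique solution (again via a mild/Duhamel formulation against the free-streaming semigroup $S(t-s)g(r,v) = g(r+(t-s)v,v)$ and a Gronwall argument, exactly parallel to the proof of Theorem~\ref{UNIQUENESS:FPE:BOUNDED} but now \emph{without} the quadratic nonlinearity, so it is even simpler); and finally invoke the Ethier-Kurtz-type implication ``unique marginals $\Rightarrow$ unique law'' for the linear problem.

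More concretely, to get from unique marginals to unique law I would use the Markov/regularized-semigroup argument: for $0\le s\le t$ and $g\in C_c^1$, the conditional law of $(r(t),v(t))$ given $\F_s$ under any solution $Q$ is characterized by the same linear Fokker-Planck equation started at time $s$ from the conditional law, and uniqueness of that equation (with \emph{arbitrary} initial probability measure, not only $\mu_0$) pins down the transition kernels; combined with the fixed initial law $\mu_0$, this determines all finite-dimensional distributions of $Q$ on $D(\R_+;\R^{2d})$, hence $Q$ itself. Alternatively, and perhaps cleaner to write, I would appeal directly to \cite[Theorem~4.4.2]{EK86}: a martingale problem is well-posed if and only if the solutions of the associated forward equation are unique for every initial condition; the forward-equation uniqueness is furnished by the Gronwall estimate just described. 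I expect the main obstacle to be purely bookkeeping: verifying the hypotheses of the Ethier-Kurtz uniqueness criterion in the time-inhomogeneous setting and making sure the forward-equation uniqueness holds starting from an arbitrary probability measure (not just $\mu_0$), which requires re-running the mild-formulation Gronwall argument in that generality. The genuinely nonlinear difficulty has already been absorbed into Theorem~\ref{UNIQUENESS:FPE:BOUNDED}, so what remains is a standard linearization-plus-classical-theory argument.

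Finally, the ``in particular'' claim about the mean-field SDE \eqref{EQ:07} follows immediately: by Lemma~\ref{LEMMA:03}(a), the law of any weak solution satisfying the moment bound \eqref{EQ:41} solves the nonlinear martingale problem, so uniqueness in law for \eqref{EQ:07} is a direct consequence of the uniqueness for $(A, C_c^1(\R^{2d}), \mu_0)$ just established.
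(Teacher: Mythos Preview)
Your proposal is correct and follows essentially the same route as the paper: first use Theorem~\ref{UNIQUENESS:FPE:BOUNDED} to identify the time-marginals, then reduce to the linear time-inhomogeneous martingale problem with frozen coefficient $A(\mu_s)$, and conclude via the Ethier--Kurtz criterion (the paper cites \cite[p.~184, Theorem~4.2]{EK86}) together with uniqueness for the linear Fokker--Planck equation proved by the same Gronwall argument. Your write-up is somewhat more detailed than the paper's, but the logical skeleton is identical.
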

\begin{proof}
 Let $\mu$ and $\widetilde{\mu}$ be two solutions to the nonlinear martingale problem $(A,C_c^1(\R^{2d}), \mu_0)$.
 Their time-marginals $(\mu_t)_{t \geq 0}$ and $(\widetilde{\mu}_t)_{t \geq 0}$
 both solve \eqref{FPE} and hence coincide, i.e. $\mu_t = \widetilde{\mu}_t$, for all $t \geq 0$. Consequently
 \[
  g(x(t)) - g(x(0)) - \int \limits_{0}^{t}(A(\mu_s)g)(x(s))ds, \ \ t \geq 0
 \]
 is a martingale with respect to $\mu$ and $\widetilde{\mu}$, for any $g \in C_c^1(\R^{2d})$.
 From this we readily conclude that $\mu = \widetilde{\mu}$, provided there exists at most one solution $(\rho_t)_{t \geq 0}$ to the 
 time-inhomogeneous Fokker-Planck equation
 \[
  \langle g, \rho_t \rangle = \langle g, \rho_0 \rangle + \int \limits_{0}^{t}\langle A(\mu_s)g, \rho_s \rangle, \ \ t \geq 0, \ \ g \in C_c^1(\R^{2d}),
 \]
 apply e.g. \cite[p.184, Theorem 4.2]{EK86}.
 Uniqueness for $(\rho_t)_{t \geq 0}$ can be shown in exactly the same way as Theorem \ref{UNIQUENESS:FPE:BOUNDED}. 
\end{proof}

\section{Further uniqueness for unbounded coefficients}
In this section we provide some sufficient condition for uniqueness and stability of solutions to \eqref{FPE} in the case where $\gamma \in (0,2]$.
\begin{Definition}
 Let $\mu_0 \in \mathcal{P}(\R^{2d})$. A solution to \eqref{FPE} is a family $(\mu_t)_{t \geq 0} \subset \mathcal{P}(\R^{2d})$ satisfying
 \[
  \int \limits_{0}^{t} \int \limits_{\R^{2d}} \langle v \rangle^{\gamma} \mu_t(dr,dv) dt < \infty, \ \ \forall T> 0
 \]
 and \eqref{FPE} holds for all $g \in C_c^1(\R^{2d})$.
\end{Definition}
Note that the additional integrability condition imposed on $(\mu_t)_{t \geq 0}$ guarantees that $\langle A(\mu_s)g, \mu_s \rangle$ in \eqref{FPE} makes sense.
As before, it is not difficult to see that any solution to \eqref{FPE} still satisfies the mild formulation \eqref{EQ:30}.

\subsection{Estimate on the total variation distance}
For $\delta > 0$ let 
\[
 \mathcal{U}(\gamma,\delta) = \left\{ (\mu_t)_{t \geq 0} \ | \ \sup \limits_{t \in [0,T]} \mathcal{C}_{\gamma}(\delta, \mu_t) < \infty, \ \ \forall T > 0 \right \}
\]
where 
\begin{align}\label{EQ:32}
 \mathcal{C}_{\gamma}(\delta, \mu_t) := \int \limits_{\R^{2d}}e^{\delta \langle v \rangle^{\gamma}} \mu_t(dr,dv).
\end{align}
The following is the main result on uniqueness and stability for \eqref{FPE}.
\begin{Theorem}\label{TH:03}
 Fix $\delta > 0$. Then there exists a constant $C = C(\psi, \sigma,\delta) > 0$
 such that any two solutions $(\mu_t)_{t \geq 0}, (\nu_t)_{t \geq 0} \in \mathcal{U}(\gamma,\delta)$ to \eqref{FPE} satisfy
 \[
  \| \mu_t - \nu_t\|_{\mathrm{TV}} \leq \| \mu_0 - \nu_0 \|_{\mathrm{TV}} 
  + C \int \limits_{0}^{t}(\| \mu_s \|_{\gamma} + \| \nu_s \|_{\gamma})\mathcal{C}_{\gamma}(\delta, \mu_s + \nu_s) \| \mu_s - \nu_s\|_{\mathrm{TV}}
  (1 + |\ln( \| \mu_s - \nu_s\|_{\mathrm{TV}} )| ) ds.
 \]
 In particular, the following assertions hold
 \begin{enumerate}
  \item[(a)] There exists at most one solution to \eqref{FPE} in $\mathcal{U}(\gamma,\delta)$.
  \item[(b)] Let $\mu_0, \mu_0^{(n)} \in \mathcal{P}(\R^{2d})$ with
  \[
   \| \mu_0 - \mu_0^{(n)} \|_{\mathrm{TV}} \longrightarrow 0, \ \ n \to \infty
  \]
  and let $(\mu_t)_{t \geq 0}$ and $(\mu_t^{(n)})_{t \geq 0}$ be two solutions to \eqref{FPE} with initial condition $\mu_0$ and $\mu_0^{(n)}$, respectively.
  Suppose that there exists $\delta > 0$ such that
  \[
   \sup \limits_{n \geq 1}\sup \limits_{t \in [0,T]}\mathcal{C}_{\gamma}(\delta, \mu_t + \mu_t^{(n)}) < \infty, \ \ \forall T > 0.
  \]
  Then, for any $t \geq 0$,
  \[
   \| \mu_t - \mu_t^{(n)} \|_{\mathrm{TV}} \longrightarrow 0, \ \ n \to \infty.
  \]
 \end{enumerate}
\end{Theorem}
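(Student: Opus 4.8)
The plan is to adapt the argument of Theorem \ref{UNIQUENESS:FPE:BOUNDED}. First I would start from the mild formulation \eqref{EQ:30}, which by the preceding lemma extends to all $g \in B(\R^{2d})$, use the splitting $\mu_s \otimes \mu_s - \nu_s \otimes \nu_s = \mu_s \otimes (\mu_s - \nu_s) + (\mu_s - \nu_s) \otimes \nu_s$, and take the supremum over $g$ with $\|g\|_\infty \le 1$ (using $\|S(t)g\|_\infty \le 1$). This reduces the problem to estimating the two bilinear terms $\langle \mathcal{A}S(t-s)g, \mu_s \otimes (\mu_s - \nu_s)\rangle$ and $\langle \mathcal{A}S(t-s)g, (\mu_s - \nu_s) \otimes \nu_s\rangle$ uniformly in $s \in [0,t]$. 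Unlike the bounded case, the bound \eqref{EQ:31} is unavailable; instead, by \eqref{CS:01}, condition (B) and \eqref{EQ:16}, one only has $|(\mathcal{A}S(t-s)g)(r,v;q,w)| \le 2\|\psi\|_\infty \sigma(v-w) \le C(\psi,\sigma)\langle v\rangle^\gamma \langle w\rangle^\gamma$, so integrating a signed probability measure against a function of this growth requires the exponential moments $\mathcal{C}_\gamma(\delta,\cdot)$.

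The key ingredient — and the step I expect to be the main obstacle — is a logarithmic interpolation estimate of the following type: if $\rho = \mu - \nu$ with $\mu, \nu \in \mathcal{P}(\R^{2d})$, $\e := \|\rho\|_{\mathrm{TV}}$, and $h$ is measurable with $|h(r,v)| \le \langle v\rangle^\gamma$, then $\bigl| \int_{\R^{2d}} h\, d\rho \bigr| \le C\, \mathcal{C}_\gamma(\delta, \mu+\nu)\, \e\,(1 + |\ln \e|)$ for a constant $C = C(\delta)$. To prove this I would split the integral at a level set $\{|v| \le K\}$: on the bounded part the total variation yields $\langle K\rangle^\gamma \e$, while on $\{|v| > K\}$ one uses $|\rho| \le \mu + \nu$ and the elementary inequality $\langle v\rangle^\gamma \le \langle K\rangle^\gamma e^{-\delta \langle K\rangle^\gamma} e^{\delta\langle v\rangle^\gamma}$ (valid there once $\langle K\rangle^\gamma \ge 1/\delta$) to obtain $\langle K\rangle^\gamma e^{-\delta\langle K\rangle^\gamma}\mathcal{C}_\gamma(\delta, \mu+\nu)$. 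Choosing $K$ so that $e^{-\delta\langle K\rangle^\gamma} = \e$ when $\e$ is small balances the two contributions and produces the factor $\e(1+|\ln\e|)$; for $\e$ bounded away from $0$ the crude bound $\bigl| \int h\, d\rho\bigr| \le \int \langle v\rangle^\gamma\, d(\mu+\nu) \le C_\delta \mathcal{C}_\gamma(\delta,\mu+\nu)$ is already of the right order. The delicate point is to keep the constant uniform over all admissible $\e$ and independent of the variable that will subsequently be integrated against the other marginal.

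With this estimate established, I would handle $\langle \mathcal{A}S(t-s)g, (\mu_s - \nu_s)\otimes \nu_s\rangle$ by freezing $(q,w)$, applying the logarithmic estimate to the function $(r,v) \mapsto C(\psi,\sigma)^{-1}\langle w\rangle^{-\gamma}(\mathcal{A}S(t-s)g)(r,v;q,w)$ (whose modulus is $\le \langle v\rangle^\gamma$ by construction), and then integrating in $(q,w)$ against $\nu_s$, which brings out the factor $\|\nu_s\|_\gamma$; the term $\langle \mathcal{A}S(t-s)g, \mu_s \otimes (\mu_s - \nu_s)\rangle$ is treated symmetrically (with the signed marginal now in the $(q,w)$ variable and the remaining variable integrated against $\mu_s$), bringing out $\|\mu_s\|_\gamma$. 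Recalling that here $\e = \|\mu_s - \nu_s\|_{\mathrm{TV}}$ and integrating in $s$, this gives exactly the asserted inequality with $C = C(\psi,\sigma,\delta)$.

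For part (a) I would set $\mu_0 = \nu_0$; since $\langle v\rangle^\gamma \le C_\delta e^{\delta \langle v\rangle^\gamma}$, the prefactor $(\|\mu_s\|_\gamma + \|\nu_s\|_\gamma)\mathcal{C}_\gamma(\delta, \mu_s + \nu_s)$ is bounded on every interval $[0,T]$ by some $M_T$, so $y(t) := \|\mu_t - \nu_t\|_{\mathrm{TV}}$ obeys $y(t) \le M_T \int_0^t y(s)(1 + |\ln y(s)|)\, ds$. Since $\int_{0^+} \frac{du}{u(1 + |\ln u|)} = \infty$, the nonlinear (Osgood-type) Gronwall lemma from the appendix forces $y \equiv 0$. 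For part (b), the same integral inequality applied to $\mu$ and $\mu^{(n)}$, together with the assumed bound $\sup_n \sup_{[0,T]} \mathcal{C}_\gamma(\delta, \mu_t + \mu_t^{(n)}) < \infty$ and the comparison version of that lemma, gives $\|\mu_t - \mu_t^{(n)}\|_{\mathrm{TV}} \le \Phi(t, \|\mu_0 - \mu_0^{(n)}\|_{\mathrm{TV}})$ with $\Phi(t, a) \to 0$ as $a \to 0^+$ for each fixed $t$, which yields the claimed convergence.
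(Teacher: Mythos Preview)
Your proposal is correct and follows essentially the same approach as the paper: start from the mild formulation \eqref{EQ:30}, split the tensor product, cut off the variable carrying the signed measure at a level $R$ (resp.\ $K$), use the total variation on the bounded part and the exponential moment $\mathcal{C}_\gamma(\delta,\cdot)$ on the tail, then optimize in $R$ to produce the factor $\e(1+|\ln\e|)$; parts (a) and (b) then follow from the Osgood--type Gronwall lemma in the appendix. The only cosmetic difference is that you package the cutoff--optimize step as a standalone ``logarithmic interpolation'' lemma and then apply it to each bilinear term, whereas the paper performs the cutoff directly inside the estimate of $\langle \mathcal{A}S(t-s)g,\mu_s\otimes(\mu_s-\nu_s)\rangle$ (and ``similarly'' for the other term); the tail bound in the paper is written as $\langle w\rangle^\gamma\le C e^{\frac{\delta}{2}\langle w\rangle^\gamma}$ with $R^\gamma=\tfrac{2}{\delta}|\ln\e|$, which is the same computation as your $\langle v\rangle^\gamma\le\langle K\rangle^\gamma e^{-\delta\langle K\rangle^\gamma}e^{\delta\langle v\rangle^\gamma}$ up to constants.
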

\begin{proof}
 Let $g \in B(\R^{2d})$ be such that $\| g\|_{\infty} \leq 1$. Using the mild formulation \eqref{EQ:30} we obtain
 \begin{align*}
  \langle g, \mu_t - \nu_t\rangle &= \langle S(t)g, \mu_0 - \nu_0 \rangle + \int \limits_{0}^{t}\langle \mathcal{A}S(t-s)g, \mu_s \otimes \mu_s - \nu_s \otimes \nu_s \rangle ds
  \\ &= \int \limits_{0}^{t}\langle \mathcal{A}S(t-s)g, \mu_s \otimes (\mu_s - \nu_s) \rangle ds
  + \int \limits_{0}^{t} \langle \mathcal{A}S(t-s)g, (\mu_s - \nu_s)\otimes \nu_s \rangle.
 \end{align*}
 Let $\varphi$ be a smooth function on $\R_+$ such that $\1_{[0,1]} \leq \varphi \leq \1_{[0,2]}$ and set 
 $\varphi_R(w) := \varphi\left( \frac{\langle w \rangle^2}{R^2}\right)$.
 Using the definition of $\mathcal{A}$ (see \eqref{CS:01}) and $(1 - \varphi_R(w)) \leq \1_{ \{ \langle w \rangle \geq R \} }$ we obtain
 \begin{align*}
  &\ \langle \mathcal{A}S(t-s)g, \mu_s \otimes (\mu_s - \nu_s) \rangle
  \\ &\leq \int \limits_{\R^{4d}}\varphi_R(w)(\mathcal{A}S(t-s)g)(r,v;q,w)d\mu_s(r,v)d(\mu_s-\nu_s)(q,w) 
  \\ &\ \ \ + \int \limits_{\R^{4d}}\1_{ \{ \langle w \rangle \geq R \} }|(\mathcal{A}S(t-s)g)(r,v;q,w)|d\mu_s(r,v)d(\mu_s+\nu_s)(q,w)
  \\ &\leq C \| \mu_s \|_{\gamma} R^{\gamma} \| \mu_s - \nu_s\|_{\mathrm{TV}}
         + C \| \mu_s \|_{\gamma} \int \limits_{\R^{4d}}\1_{ \{ \langle w \rangle \geq R \} } \langle w \rangle^{\gamma} d(\mu_s+\nu_s)(q,w).
 \end{align*}
 For the last term we use similar arguments to \cite{FM09} and \cite{FRS18b}.
 Namely, using $\langle w \rangle^{\gamma} \leq C e^{\frac{\delta}{2} \langle w \rangle^{\gamma}}$
 for some constant $C > 0$ large enough, we get
 \begin{align*}
  \int \limits_{\R^{4d}}\1_{ \{ \langle w \rangle \geq R \} } \langle w \rangle^{\gamma} d(\mu_s+\nu_s)(q,w)
  &\leq C \int \limits_{\R^{4d}} \1_{ \{ \langle w \rangle \geq R \} } e^{- \frac{\delta}{2}\langle w \rangle^{\gamma}} e^{\delta \langle w \rangle^{\gamma}}d(\mu_s + \nu_s)(q,w)
  \\ &\leq C e^{- \frac{\delta}{2}R^{\gamma}} \mathcal{C}_{\gamma}(\delta,\mu_s + \nu_s).
 \end{align*}
 Taking $R^{\gamma} = \frac{2}{\delta}| \ln( \| \mu_s - \nu_s \|_{\mathrm{TV}} )|$ we deduce
 \[
  \langle \mathcal{A}S(t-s)g, \mu_s \otimes (\mu_s - \nu_s) \rangle
  \leq C \mathcal{C}_{\gamma}(\delta,\mu_s + \nu_s) \| \mu_s \|_{\gamma} \| \mu_s - \nu_s\|_{\mathrm{TV}}(1 + |\ln( \| \mu_s - \nu_s \|_{\mathrm{TV}} )|).
 \]
 Proceeding in the same way we can show that
 \[
  \langle \mathcal{A}S(t-s)g, (\mu_s - \nu_s)\otimes \nu_s \rangle
  \leq C \mathcal{C}_{\gamma}(\delta,\mu_s + \nu_s) \| \nu_s \|_{\gamma} \| \mu_s - \nu_s\|_{\mathrm{TV}}(1 + |\ln( \| \mu_s - \nu_s \|_{\mathrm{TV}} )|),
 \]
 which proves the assertion after taking the supremum over all $g \in B(\R^{2d})$ with $\| g \|_{\infty} \leq 1$.
 Uniqueness and stability is a direct consequence of the a priori estimate we have shown, i.e.
 one may apply a generalization of the Gronwall inequality stated in the appendix.
\end{proof}

\subsection{Estimate on the Wasserstein distance}
In this part we prove estimates for solutions to \eqref{FPE} with respect to the Wasserstein distance
\[
 d(\mu, \nu) = \sup \limits_{\| g\|_0 \leq 1} \frac{|g(r,v) - g(\widetilde{r}, \widetilde{v})|}{|r - \widetilde{r}| + |v - \widetilde{v}|},
 \qquad \| g \|_0 := \sup \limits_{(r,v) \neq (\widetilde{r}, \widetilde{v})} \frac{|g(r,v) - g(\widetilde{r}, \widetilde{v})|}{|r - \widetilde{r}| + |v - \widetilde{v}|},
\]
where $\mu, \nu \in \mathcal{P}(\R^{2d})$ are supposed to have finite first moments.
Since particles are transported by the transport operator $v \cdot \nabla_r$, it is more natural to use the shifted Wasserstein distance
\[
 d_t(\mu,\nu) = d(S(-t)^*\mu, S(-t)^*\nu), \ \ t \geq 0,
\]
where $S(t)g(r,v) = g(r+vt,v)$ and $S(t)^*$ is the adjoint operator defined by the relation
\[
 \langle S(t)g, \mu \rangle = \langle g, S(t)^*\mu \rangle, \ \ g \in B(\R^{2d}), \ \ \mu \in \mathcal{P}(\R^{2d}).
\]
Below we will use another characterization of the shifted distance in terms of optimal couplings described as follows.

Introduce a one-paramter family of metrics on $\R^{2d}$
\[
 |(r,v) - (\widetilde{r},\widetilde{v})|_t := |(r-vt) - (\widetilde{r} - \widetilde{v}t)| + |v- \widetilde{v}|, \ \ t \geq 0
\]
and related to this metrics define the time-dependent Lipschitz norms 
\[
 \| g \|_{t} = \sup_{(r,v) \neq (\widetilde{r}, \widetilde{v})} \frac{|g(r,v) - g(\widetilde{r}, \widetilde{v})|}{|(r,v) - (\widetilde{r},\widetilde{v})|_t}.
\]
Note that this norms are all equivalent due to 
\begin{align*}
 \frac{1}{1+t}|(r,v) - (\widetilde{r},\widetilde{v})|_t \leq |(r,v) - (\widetilde{r},\widetilde{v})|_0 \leq (1+t)|(r,v) - (\widetilde{r},\widetilde{v})|_t.
\end{align*}
Given $\mu,\nu \in \mathcal{P}(\R^{2d})$, a coupling $H$ of $(\mu,\nu)$ is a probability measure on $\R^{4d}$ such that
its marginals are given by $\mu$ and $\nu$, respectively, i.e. for all $g_1,g_2 \in C_b(\R^{2d})$ one has
\[
 \int \limits_{\R^{4d}}\left( g_1(r,v) + g_2(\widetilde{r},\widetilde{v})\right)dH(r,v;\widetilde{r},\widetilde{v}) = \langle g_1, \mu \rangle + \langle g_2, \nu \rangle.
\]
Let $\mathcal{H}(\mu,\nu)$ the space of all such couplings.
The reader may consult \cite{V09} for additional details on couplings and related Wasserstein distance.
\begin{Proposition}
 Let $\mu,\nu \in \mathcal{P}\R^{2d})$ satisfy $\int_{\R^{2d}}(|r| + |v|)(\mu + \nu)(dr,dv) < \infty$ and fix $t \geq 0$. 
 Then there exists $H_t \in \mathcal{H}(\mu, \nu)$ such that
 \begin{align}\label{COUP:00}
  d_t(\mu, \nu) &= \sup\limits_{\| \psi \|_{0} \leq 1} \langle S(-t)\psi, \mu - \nu \rangle 
  = \sup\limits_{\| \psi \|_{t} \leq 1} \langle \psi, \mu - \nu \rangle
  = \int \limits_{\R^{4d}}|(r,v) - (\widetilde{r}, \widetilde{v})|_t dH_t(r,v;\widetilde{r},\widetilde{v}).
 \end{align}
\end{Proposition}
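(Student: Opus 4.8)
The plan is to view the three quantities in \eqref{COUP:00} as three incarnations of one and the same Kantorovich problem: the first equality is an unravelling of definitions, the second a linear change of variables, and the third the Kantorovich--Rubinstein duality together with the existence of an optimal transport plan, which produces the coupling $H_t$.

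Set $\Phi_t(r,v) := (r-tv,v)$; this is a linear bijection of $\R^{2d}$ with inverse $\Phi_{-t}$, and $g\circ\Phi_t = S(-t)g$ for every $g \in B(\R^{2d})$. Comparing with the defining relation $\langle S(-t)g,\mu\rangle = \langle g, S(-t)^*\mu\rangle$ shows that $S(-t)^*\mu$ is just the pushforward $(\Phi_t)_*\mu$; since $|\Phi_t(r,v)| \le (1+t)(|r|+|v|)$, the first moment hypothesis on $\mu+\nu$ transfers to $S(-t)^*\mu + S(-t)^*\nu$, so $d_t(\mu,\nu) = d(S(-t)^*\mu, S(-t)^*\nu)$ is well defined and, by the dual (Kantorovich--Rubinstein) form of $d$,
\[
 d_t(\mu,\nu) = \sup_{\|g\|_0 \le 1}\langle g, S(-t)^*\mu - S(-t)^*\nu\rangle = \sup_{\|g\|_0 \le 1}\langle S(-t)g, \mu-\nu\rangle,
\]
which is the first equality. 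For the second one I would use the elementary identity $|\Phi_t(r,v) - \Phi_t(\widetilde r,\widetilde v)|_0 = |(r,v)-(\widetilde r,\widetilde v)|_t$, where $|\cdot|_0 = |r-\widetilde r| + |v-\widetilde v|$; it yields $\|S(-t)g\|_t = \|g\circ\Phi_t\|_t = \|g\|_0$ and, symmetrically, $\|S(t)\phi\|_0 = \|\phi\|_t$. Hence the substitution $\phi := S(-t)g$, with inverse $g = S(t)\phi$ (using $S(t)S(-t) = \mathrm{id}$), is a bijection between $\{\|g\|_0 \le 1\}$ and $\{\|\phi\|_t \le 1\}$ under which $\langle S(-t)g, \mu-\nu\rangle = \langle \phi, \mu-\nu\rangle$, so the supremum above equals $\sup_{\|\phi\|_t \le 1}\langle \phi, \mu-\nu\rangle$.

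For the last equality I would appeal to optimal transport. The map $(x,y)\mapsto |x-y|_t$ is a genuine, continuous metric on $\R^{2d}$ (it is $|\cdot|_0$ read through the linear isomorphism $\Phi_t$), equivalent to the Euclidean one by the two-sided bound stated just before the Proposition, and the finite first moment assumption gives $\int_{\R^{4d}}|(r,v)-(\widetilde r,\widetilde v)|_t\, d(\mu\otimes\nu) < \infty$, so the Kantorovich problem with cost $|\cdot|_t$ over $\mathcal{H}(\mu,\nu)$ is finite. By the existence of optimal couplings for lower semicontinuous costs and Kantorovich--Rubinstein duality (see \cite{V09}) there is $H_t \in \mathcal{H}(\mu,\nu)$ with $\int_{\R^{4d}}|(r,v)-(\widetilde r,\widetilde v)|_t\, dH_t = \sup_{\|\phi\|_t \le 1}\langle\phi,\mu-\nu\rangle$, which closes the chain of equalities. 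I do not expect a genuine obstacle here; the only point to watch is the bookkeeping linking the adjoint $S(-t)^*$, the pushforward $(\Phi_t)_*$ and the Lipschitz-norm isometry, together with checking that finite first moments survive the shift so that $d$, the duality theorem and the existence of optimizers are all applicable.
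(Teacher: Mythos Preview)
Your argument is correct and follows exactly the route sketched in the paper's own proof: the first equality is the definition of $S(-t)^*$ (i.e.\ pushforward by $\Phi_t$), the second is the change of variables induced by the isometry $\|S(-t)g\|_t=\|g\|_0$, and the third is Kantorovich--Rubinstein duality together with existence of an optimal coupling for the metric $|\cdot|_t$. You have simply written out in full the three one-line justifications the paper gives, including the moment check ensuring everything is well defined; there is nothing to add or correct.
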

\begin{proof}
 The first equality follows from the definition of $S(t)^*$, the second equality from the definition of the norms $\| \cdot \|_t$
 while the third equality is a particular case of the Kantorovich-duality (see \cite{V09}).
\end{proof}
The following is our main coupling estimate for the Wasserstein distance $d_t$.
\begin{Proposition}
 Suppose that $\int_{\R^{2d}} |u| a(u)du < \infty$ and let $\mu_0, \nu_0 \in \mathcal{P}(\R^{2d})$ satisfy
 \[
  \int \limits_{\R^{2d}} (|r| + |v|) (\mu_0 + \nu_0)(dr,dv) < \infty.
 \]
 Let $(\mu_t)_{t \geq 0}$ and $(\nu_t)_{t \geq 0}$ be two solutions to \eqref{FPE} satisfying
 \[
  \int \limits_{0}^{T} \int \limits_{\R^{2d}} \left( |r| + |v|^{1 + \gamma} \right) (\mu_t + \nu_t)(dr,dv) < \infty, \ \ \forall T > 0.
 \]
 For $t \geq 0$, let $H_t \in \mathcal{H}(\mu_t,\nu_t)$ be such that
 \begin{align}\label{CS:02}
  d_t(\mu_t,\nu_t) = \int \limits_{\R^{4d}}|(r,v) - (\widetilde{r},\widetilde{v})|_t dH_t(r,v;\widetilde{r},\widetilde{v}).
 \end{align}
 Then there exists $C(T,a,\psi) > 0$ (independent of $\mu_t,\nu_t$) such that, for any $t \geq 0$,
 \begin{align*}
   d_t(\mu_t,\nu_t) \leq d_0(\mu_0,\nu_0) 
  + C(T,a,\psi) \int \limits_{0}^{t}\int \limits_{\R^{8d}} \Lambda(r,v,q,w; \widetilde{r},\widetilde{v},\widetilde{q},\widetilde{w}) dH_s^0 dH_s^1 ds
 \end{align*}
 where $dH_s^0 = dH_s(r,v;\widetilde{r},\widetilde{v})$, $dH_s^1 = dH_s(q,w;\widetilde{q},\widetilde{w})$ and
 \begin{align*}
  \Lambda(r,v,q,w; \widetilde{r},\widetilde{v},\widetilde{q},\widetilde{w}) 
 &= (  \langle v \rangle + \langle w \rangle + \langle \widetilde{v} \rangle + \langle \widetilde{w} \rangle )  | \sigma(v - w)\psi(r-q) - \sigma(\widetilde{v} - \widetilde{w})\psi(\widetilde{r} - \widetilde{q})|
 \\ &\ \ \  + \left( |(r,w) - (\widetilde{r}, \widetilde{w})|_s + |(r,v) - (\widetilde{r},\widetilde{v})|_s \right) \min\{ \sigma(v - w), \sigma(\widetilde{v} - \widetilde{w})\}
 \end{align*}
\end{Proposition}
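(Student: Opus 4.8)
The plan is to start from the mild formulation \eqref{EQ:30} applied to the shifted test functions and to exploit the coupling characterization \eqref{COUP:00}. Concretely, fix a function $g$ with $\|g\|_t \leq 1$, so that $\|S(-t)g\|_0 \leq 1$. Writing $\langle g, \mu_t - \nu_t \rangle$ via \eqref{EQ:30} with the test function $S(-t)g$ (equivalently: applying the mild formulation and then testing against $S(t)$-shifts) and subtracting the two solutions, one obtains the initial term $\langle S(t)S(-t)g, \mu_0 - \nu_0 \rangle = \langle g, S(t)^*(\mu_0 - \nu_0)\rangle$, which is bounded by $d_0(\mu_0,\nu_0)$, plus a time integral of differences of the quadratic term $\langle \mathcal{A}S(t-s)S(-t)g, \mu_s \otimes \mu_s - \nu_s \otimes \nu_s\rangle$. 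The standard trick is to split $\mu_s \otimes \mu_s - \nu_s \otimes \nu_s$ and then express each resulting pairing against $\mu_s \otimes (\mu_s - \nu_s)$ and $(\mu_s - \nu_s) \otimes \nu_s$ using the \emph{optimal couplings} $H_s$ given by \eqref{CS:02}: since $H_s$ has marginals $\mu_s$ and $\nu_s$, any expression of the form $\langle \Phi, \mu_s \rangle - \langle \Phi, \nu_s \rangle$ equals $\int (\Phi(r,v) - \Phi(\widetilde r, \widetilde v)) \, dH_s(r,v;\widetilde r, \widetilde v)$. Doing this for both factors produces the double integral $\int\int (\cdots) \, dH_s^0 \, dH_s^1$.

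The heart of the estimate is then a pointwise bound on the integrand, i.e. on the difference
\[
\big(\mathcal{A}S(t-s)S(-t)g\big)(r,v;q,w) - \big(\mathcal{A}S(t-s)S(-t)g\big)(\widetilde r,\widetilde v;\widetilde q,\widetilde w),
\]
where $\mathcal{A}$ is as in \eqref{CS:01}. Recall $\mathcal{A}h(r,v;q,w) = \psi(r-q)\sigma(v-w)\int (h(r,w+u) - h(r,v)) a(u)du$. I would split this difference into two groups by a telescoping argument: first replace the rate factor $\psi(r-q)\sigma(v-w)$ by $\psi(\widetilde r - \widetilde q)\sigma(\widetilde v - \widetilde w)$, keeping the bracketed integrand fixed; this contributes a term proportional to $|\sigma(v-w)\psi(r-q) - \sigma(\widetilde v - \widetilde w)\psi(\widetilde r - \widetilde q)|$ times a bound on $|\int (h(r,w+u) - h(r,v)) a(u)du|$. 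Since $h = S(t-s)S(-t)g = S(-s)g$ and $\|g\|_t \leq 1$, one has $\|h\|_s \leq 1$ (here one uses that $S(\tau)$ maps $\|\cdot\|_{a}$-Lipschitz functions into $\|\cdot\|_{a+\tau}$-Lipschitz ones, modulo bookkeeping on the shift parameters), hence $|h(r,w+u) - h(r,v)| \leq |(r,w+u)-(r,v)|_s \leq (1+s)(|w-v| + |u|)$ or, more carefully, the $s$-metric evaluates to $|w - v + (something) | $ plus $|u|$; integrating against $a(u)du$ (finite first moment) and using $\langle v\rangle + \langle w\rangle$ to absorb $|v-w|$ gives exactly the first line of $\Lambda$. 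The second group keeps the rate at $\min\{\sigma(v-w), \sigma(\widetilde v - \widetilde w)\}$ (using symmetry/boundedness of $\psi$ to extract the constant $C(T,a,\psi)$) and estimates the difference of the bracketed integrands $\int\!\big[(h(r,w+u) - h(r,v)) - (h(\widetilde r, \widetilde w + u) - h(\widetilde r, \widetilde v))\big] a(u)du$ by the $\|\cdot\|_s$-Lipschitz property applied twice: $|h(r,w+u) - h(\widetilde r, \widetilde w+u)| \leq |(r,w+u) - (\widetilde r, \widetilde w + u)|_s = |(r,w)-(\widetilde r,\widetilde w)|_s$ and similarly for the $v$-terms, yielding the second line of $\Lambda$.

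After assembling these pieces, integrating the pointwise bound against $dH_s^0 \, dH_s^1 \, ds$, and taking the supremum over all $g$ with $\|g\|_t \leq 1$ using the first equality in \eqref{COUP:00}, one arrives precisely at the stated inequality, with $C(T,a,\psi)$ collecting $\|\psi\|_\infty$, the first moment $\int |u| a(u) du$, and the factors $(1+T)$ coming from the equivalence of the norms $\|\cdot\|_s$ for $s \in [0,T]$. One technical point worth spelling out: the mild formulation \eqref{EQ:30} is a priori stated for $g \in C_c^1$, but as remarked in the text it extends to $g \in B(\R^{2d})$; to use Lipschitz (hence merely continuous, possibly unbounded) test functions one truncates $g$, applies the bound, and passes to the limit using the assumed finiteness of $\int_0^T\int (|r| + |v|^{1+\gamma})(\mu_t+\nu_t)$, which controls all the error terms uniformly.

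I expect the main obstacle to be the bookkeeping around the shift semigroup and the family of metrics $|\cdot|_t$: one must track carefully how $S(t-s)$ composed with the "unshift" $S(-t)$ interacts with the $s$-dependent Lipschitz norm, and verify that the net effect is to replace $\|g\|_t \leq 1$ by an $s$-Lipschitz bound on the function appearing inside $\mathcal{A}$ at time $s$ — this is exactly why the shifted distance $d_t$ (rather than the plain $d$) is the correct object, since the transport part $v\cdot\nabla_r$ is then handled exactly and only the jump part generates the right-hand side. Once that identification is made cleanly, the remaining estimates are routine telescoping and triangle-inequality manipulations.
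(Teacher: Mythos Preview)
Your approach is essentially the paper's: mild formulation applied to $S(-t)g$ with $\|g\|_0\le 1$ (so that $S(t-s)S(-t)g=S(-s)g$ has $\|S(-s)g\|_s\le 1$), coupling representation of $\mu_s\otimes\mu_s-\nu_s\otimes\nu_s$ via $H_s\otimes H_s$, and pointwise Lipschitz estimates on the integrand.

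One small but relevant discrepancy: your ``telescoping'' split --- replace $\psi\sigma$ by $\widetilde\psi\widetilde\sigma$ and then compare brackets --- leaves the factor $\widetilde\psi\widetilde\sigma$ (or $\psi\sigma$, depending on direction) in front of the bracket difference, not $\min\{\sigma,\widetilde\sigma\}$. Since $\min\le\widetilde\sigma$, the inequality you obtain is weaker than the stated one. The paper instead uses the elementary identity $x = x\wedge y + (x-y)_+$ on $x=\psi\sigma$, $y=\widetilde\psi\widetilde\sigma$, which splits the integrand into three pieces $J_1,J_2,J_3$: the common part $J_1$ carries the factor $\psi\sigma\wedge\widetilde\psi\widetilde\sigma\le\|\psi\|_\infty\min\{\sigma,\widetilde\sigma\}$ and the bracket difference, while $J_2,J_3$ carry $(\psi\sigma-\widetilde\psi\widetilde\sigma)_+$ and $(\widetilde\psi\widetilde\sigma-\psi\sigma)_+$ against a single bracket each. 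This is what produces the $\min$ in the second line of $\Lambda$. Apart from this refinement, your outline matches the paper's proof.
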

\begin{proof}
 It is not difficult to see that both solutions still satisfy the mild formulation \eqref{EQ:30} for any $g$ with $\| g\|_0 \leq 1$.
 Hence we obtain
 \begin{align*}
  &\ \langle S(-t)g, \mu_t - \nu_t \rangle - \langle g, \mu_0 - \nu_0 \rangle
 \\ &= \int \limits_{0}^{t}\langle \mathcal{A}S(-s)g, \mu_s \otimes \mu_s - \nu_s \otimes \nu_s \rangle ds
 \\ &= \int \limits_{0}^{t} \int \limits_{\R^{8d}} \left[ (\mathcal{A}S(-s)g)(r,v;\widetilde{r},\widetilde{v}) - (\mathcal{A}S(-s)g)(q,w;\widetilde{q},\widetilde{w})\right] dH_s^0 dH_s^1ds 
 =: I.
 \end{align*}
 For simplicity of notation, let $\widetilde{\psi} = \psi(\widetilde{r}- \widetilde{q})$, $\widetilde{\sigma} = \sigma(\widetilde{v} - \widetilde{w})$
 and similarly $\psi = \psi(r-q)$ and $\sigma = \sigma(v-w)$.
 Using the definition of $dH_s^0dH_s^1$ together with $x = x\wedge y + (x-y)_+$, for $x,y \geq 0$, we obtain
 \begin{align*}
  I &= \int \limits_{0}^{t}\int \limits_{\R^{9d}}\bigg \{ \left( S(-s)g(r,w+u) - S(-s)g(r,v)\right)\psi \sigma
  \\ &\ \ \ \ \ \ \ \ \ \ - \left( S(-s)g(\widetilde{r}, \widetilde{w}+ u) - S(-s)g(\widetilde{r},\widetilde{v})\right)\widetilde{\psi}\widetilde{\sigma} \bigg \}a(u)dH_s^0 dH_s^1 ds
  \\ &\leq \int \limits_{0}^{t}\int \limits_{\R^{9d}}\bigg\{ S(-s)g(r,w+u) - S(-s)g(\widetilde{r}, \widetilde{w} + u) 
  \\ &\ \ \ \ \ \ \ \ \ \ +  S(-s)g(\widetilde{r}, \widetilde{v}) - S(-s)g(r,v)\bigg\} (\psi \sigma \wedge \widetilde{\psi} \widetilde{\sigma}) a(u)du dH_s^0 dH_s^1 ds
  \\ &\ \ \ + \int \limits_{0}^{t}\int \limits_{\R^{9d}}\left( S(-s)g(r,w+u) - S(-s)g(r,v)\right)\left( \psi \sigma - \widetilde{\psi}\widetilde{\sigma} \right)_+ a(u)du dH_s^0 dH_s^1 ds
  \\ &\ \ \ + \int \limits_{0}^{t}\int \limits_{\R^{9d}}\left( S(-s)g(\widetilde{r},\widetilde{w}+u) - S(-s)g(\widetilde{r},\widetilde{v})\right)\left( \widetilde{\psi} \widetilde{\sigma} - \psi \sigma \right)_+ a(u)du dH_s^0 dH_s^1 ds
  \\ &= J_1 + J_2 + J_3.
 \end{align*}
 Using $\| S(-s)g \|_s \leq 1$ we obtain
 \begin{align*}
  J_2 + J_3 &\leq \int \limits_{0}^{t}\int \limits_{\R^{9d}}\left\{ |(r,w+u) - (r,v)|_s + |(\widetilde{r}, \widetilde{w} + u) - (\widetilde{r},\widetilde{v})|_s\right\} \left| \psi \sigma - \widetilde{\psi}\widetilde{\sigma} \right| a(u)du dH_s^0 dH_s^1 ds
  \\ &\leq \int \limits_{0}^{t}(1+s)\int \limits_{\R^{9d}}\left( |w+u-v| + |\widetilde{w} + u - \widetilde{v}| \right)\left| \psi \sigma - \widetilde{\psi}\widetilde{\sigma} \right| a(u)du dH_s^0 dH_s^1 ds
  \\ &\leq C \int \limits_{0}^{t}\int \limits_{\R^{8d}}(  \langle v \rangle + \langle w \rangle + \langle \widetilde{v} \rangle + \langle \widetilde{w} \rangle )\left| \psi \sigma - \widetilde{\psi}\widetilde{\sigma} \right| dH_s^0 dH_s^1 ds
 \end{align*}
 where we have used $|w+u-v| + |\widetilde{w} + u - \widetilde{v}| \leq C \langle u \rangle (  \langle v \rangle + \langle w \rangle + \langle \widetilde{v} \rangle + \langle \widetilde{w} \rangle )$ 
 in the last inequality. Using again $\| S(-s)g\|_{s} \leq 1$ gives
 \begin{align*}
  S(-s)g(r,w+u) - S(-s)g(\widetilde{r}, \widetilde{w} + u) 
  &\leq |(r,w+u) - (\widetilde{r}, \widetilde{w} + u)|_s = |(r,w) - (\widetilde{r},\widetilde{w})|_s,
  \\ S(-s)g(\widetilde{r}, \widetilde{v}) - S(-s)g(r,v) &\leq |(r,v) - (\widetilde{r},\widetilde{v})|_s.
 \end{align*}
 Hence $J_1$ is estimated by
 \begin{align*}
  J_1 &\leq \int \limits_{0}^{t} \int \limits_{\R^{9d}}\left(  |(r,w) - (\widetilde{r},\widetilde{w})|_s +  |(r,v) - (\widetilde{r},\widetilde{v})|_s \right) (\psi \sigma \wedge \widetilde{\psi} \widetilde{\sigma}) a(u)du dH_s^0 dH_s^1 ds
 \\ &\leq  \| \psi \|_{\infty} \int \limits_0^t \int \limits_{\R^{8d}}\left(  |(r,w) - (\widetilde{r},\widetilde{w})|_s +  |(r,v) - (\widetilde{r},\widetilde{v})|_s \right) (\sigma \wedge \widetilde{\sigma}) dH_s^0 dH_s^1 ds
 \end{align*}
 which proves the assertion.
\end{proof}
The following gives the main estimate for this section.
\begin{Theorem}\label{TH:06}
 Suppose that $\int_{\R^{2d}} |u| a(u)du < \infty$ and assume that $\psi,\sigma$ are globally Lipschitz continuous.
 Then for each $\delta > 0$ and $T > 0$ there exists a constant $C = C(T, \delta,a, \psi,\sigma)$  such that for all $\mu_0, \nu_0 \in \mathcal{P}(\R^{2d})$ 
 any two solutions $(\mu_t)_{t \geq 0}, (\nu_t)_{t \geq 0}$ to \eqref{FPE} satisfying 
 \begin{align}\label{EQ:01}
  \mathcal{C}_{\gamma}(T, \mu+\nu, \delta) = \sup \limits_{t \in [0,T]}\int \limits_{\R^{2d}}\left( e^{\delta |v|^{1+\gamma}} + |r|^{1+\delta} \right)d(\mu_t + \nu_t)(r,v) < \infty
 \end{align}
 it holds that
 \begin{align*}
  d_t(\mu_t,\nu_t) &\leq d_0(\mu_0,\nu_0) +  C \mathcal{C}_{\gamma}(T, \mu+\nu, \delta)\int \limits_{0}^t d_s(\mu_s,\nu_s) (1 + |\ln(d_s(\mu_s,\nu_s))|) ds.
 \end{align*}
\end{Theorem}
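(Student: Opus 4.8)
The plan is to combine the coupling inequality of the preceding Proposition with the logarithmic device already used in the proof of Theorem~\ref{TH:03}. First I would note that the hypotheses of that Proposition are all met: $\int_{\R^{d}}|u|a(u)du<\infty$ is assumed, and \eqref{EQ:01} forces $\int_{\R^{2d}}(|r|+|v|)(\mu_0+\nu_0)(dr,dv)<\infty$ as well as $\int_0^T\int_{\R^{2d}}(|r|+|v|^{1+\gamma})(\mu_t+\nu_t)(dr,dv)<\infty$. Hence
\[
 d_t(\mu_t,\nu_t)\le d_0(\mu_0,\nu_0)+C(T,a,\psi)\int_0^t\int_{\R^{8d}}\Lambda(r,v,q,w;\widetilde r,\widetilde v,\widetilde q,\widetilde w)\,dH_s^0\,dH_s^1\,ds ,
\]
and the task reduces to bounding the double integral, after integration in $s$, by $C\,\mathcal C_\gamma(T,\mu+\nu,\delta)\int_0^t d_s(\mu_s,\nu_s)\bigl(1+|\ln d_s(\mu_s,\nu_s)|\bigr)\,ds$.

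The next step is to linearise the two parts of $\Lambda$. For the first part, global Lipschitz continuity of $\psi,\sigma$ together with condition~(B) gives
\[
 |\sigma(v-w)\psi(r-q)-\sigma(\widetilde v-\widetilde w)\psi(\widetilde r-\widetilde q)|\le C\langle\widetilde v-\widetilde w\rangle^{\gamma}\bigl(|r-\widetilde r|+|q-\widetilde q|\bigr)+C\bigl(|v-\widetilde v|+|w-\widetilde w|\bigr),
\]
and using \eqref{EQ:16} one writes $\langle\widetilde v-\widetilde w\rangle^{\gamma}\le 2^{\gamma/2}\langle\widetilde v\rangle^{\gamma}\langle\widetilde w\rangle^{\gamma}$; bounding the raw increments by the shifted moduli, $|r-\widetilde r|\le(1+s)|(r,v)-(\widetilde r,\widetilde v)|_s$, $|v-\widetilde v|\le|(r,v)-(\widetilde r,\widetilde v)|_s$ and likewise for $(q,w;\widetilde q,\widetilde w)$, the first part of $\Lambda$ becomes a finite sum of products of a function of the $H_s^0$-variables $(r,v,\widetilde r,\widetilde v)$ — carrying a velocity weight of degree at most $1+\gamma$ times a single shifted modulus — and a polynomial velocity moment of bounded degree in the $H_s^1$-variables, and symmetrically. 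For the second part I would first record the elementary inequality
\[
 |(r,w)-(\widetilde r,\widetilde w)|_s\le(1+s)\bigl(|(r,v)-(\widetilde r,\widetilde v)|_s+|(q,w)-(\widetilde q,\widetilde w)|_s\bigr),
\]
obtained by inserting $s(v-\widetilde v)-s(w-\widetilde w)$ inside the first modulus, and bound $\min\{\sigma(v-w),\sigma(\widetilde v-\widetilde w)\}$ by $C\langle v\rangle^{\gamma}\langle w\rangle^{\gamma}$ via condition~(B) and \eqref{EQ:16}; then the second part, too, splits into products of an $H_s^0$-factor (velocity weight of degree $\le\gamma$ times a shifted modulus) and an $H_s^1$-velocity moment, and symmetrically.

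After these reductions Fubini's theorem decomposes $\int_{\R^{8d}}\Lambda\,dH_s^0\,dH_s^1$ into a finite sum of products of two one-coupling integrals: one of the form $\int_{\R^{4d}}(\text{velocity weight})\,dH_s^j$, which is finite and at most $C\,\mathcal C_\gamma(T,\mu+\nu,\delta)$ because $\langle v\rangle^{1+\gamma}\le C e^{\delta|v|^{1+\gamma}}$ and \eqref{EQ:01} holds; and one of the form
\[
 \Theta_s:=\int_{\R^{4d}}w(v,\widetilde v)\,|(r,v)-(\widetilde r,\widetilde v)|_s\,dH_s(r,v;\widetilde r,\widetilde v),
\]
with $w$ a velocity weight of degree at most $1+\gamma$. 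The core estimate is $\Theta_s\le C\,\mathcal C_\gamma(T,\mu+\nu,\delta)\,d_s(\mu_s,\nu_s)\bigl(1+|\ln d_s(\mu_s,\nu_s)|\bigr)$, which I would prove exactly as the logarithmic bound in Theorem~\ref{TH:03} and as in \cite{FM09,FRS18b}. Writing $\kappa:=d_s(\mu_s,\nu_s)$, choose $R$ with $R^{1+\gamma}=\frac{2}{\delta}|\ln\kappa|$ when $\kappa<1$ (and $R:=1$ otherwise) and split $\R^{4d}$ at $\{\langle v\rangle\le R,\ \langle\widetilde v\rangle\le R\}$. On this set $w\le C R^{1+\gamma}\le C(1+|\ln\kappa|)$, while $\int_{\R^{4d}}|(r,v)-(\widetilde r,\widetilde v)|_s\,dH_s=\kappa$ by the optimality \eqref{CS:02}, producing the leading term $C\kappa(1+|\ln\kappa|)$. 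On the complement one uses $|(r,v)-(\widetilde r,\widetilde v)|_s\le C(T)(1+|r|+|\widetilde r|+|v|+|\widetilde v|)$ and $w\,\1_{\{\langle v\rangle>R\}\cup\{\langle\widetilde v\rangle>R\}}\le C e^{-\frac{\delta}{2}R^{1+\gamma}}\bigl(e^{\delta|v|^{1+\gamma}}+e^{\delta|\widetilde v|^{1+\gamma}}\bigr)$; integrating and invoking \eqref{EQ:01} — both the $e^{\delta|v|^{1+\gamma}}$ part and the $|r|^{1+\delta}$ part, with Young's inequality to decouple the position factors from the exponential velocity factors under $H_s$ — bounds this contribution by $C\,\mathcal C_\gamma(T,\mu+\nu,\delta)\,e^{-\frac{\delta}{2}R^{1+\gamma}}=C\,\mathcal C_\gamma(T,\mu+\nu,\delta)\,\kappa$.

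Assembling the pieces, integrating over $s\in[0,t]$, and taking the supremum over $\|g\|_0\le1$ — equivalently over $\|S(-s)g\|_s\le1$, by \eqref{COUP:00} — yields the asserted a priori inequality, the constant $C=C(T,\delta,a,\psi,\sigma)$ absorbing the Lipschitz constants, $\|\psi\|_\infty$, $c_\sigma$, the factor $(1+T)$ from the shifted-metric comparisons, and the constant in $\langle v\rangle^{1+\gamma}\le C e^{\delta|v|^{1+\gamma}}$. The step I expect to be the main obstacle is the large-velocity half of the estimate for $\Theta_s$: one must absorb an unbounded velocity weight against a coupling that controls only the $\|\cdot\|_s$-length, while simultaneously handling the genuinely coupled position--velocity products such as $|r|\,e^{\delta|\widetilde v|^{1+\gamma}}$ that appear — it is precisely this that forces the moment hypothesis \eqref{EQ:01} to pair an exponential moment in $|v|^{1+\gamma}$, matching the weight degree $1+\gamma$, with a super-linear moment $|r|^{1+\delta}$ in the position variable. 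A minor additional point is the measurability of $s\mapsto H_s$, which follows from a standard measurable-selection argument for optimal couplings.
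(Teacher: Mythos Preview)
Your proposal is correct and follows essentially the same route as the paper: apply the coupling Proposition, linearise both parts of $\Lambda$ via the Lipschitz hypotheses and the shifted-metric comparison $|(r,w)-(\widetilde r,\widetilde w)|_s\le(1+T)\bigl(|(r,v)-(\widetilde r,\widetilde v)|_s+|(q,w)-(\widetilde q,\widetilde w)|_s\bigr)$, factor via Fubini into moment integrals times weighted one-coupling integrals $\Theta_s$, and control $\Theta_s$ by the logarithmic cutoff argument of Theorem~\ref{TH:03}. Your treatment of the tail contribution to $\Theta_s$ is in fact more explicit than the paper's (which simply refers back to \cite{FM09,FRS18b}), correctly isolating why the position moment $|r|^{1+\delta}$ is needed to decouple mixed products like $|r|\,e^{\delta|\widetilde v|^{1+\gamma}}$ under $H_s$.
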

\begin{proof}
 It is easily seen that the general coupling inequality is applicable in this case.
 Let us start with the first term in $\Lambda$.
 Using the elementary inequality
 \[
  c_{a,b}|x^{a+b} - y^{a+b}| \leq (x^a + y^a)|x^b - y^b| \leq C_{a,b}|x^{a+b} - y^{a+b}|, \ \ x,y \geq 0, \ \ a,b > 0
 \]
 we obtain
 \begin{align*}
  & \left| \sigma(v-w)\psi(r-q) - \sigma(\widetilde{v} - \widetilde{w}) \psi(\widetilde{r} - \widetilde{q}) \right|
\\ &\leq \sigma(v-w) \left| \psi(r-q) - \psi(\widetilde{r} - \widetilde{q}) \right| + \psi(\widetilde{r} - \widetilde{q})\left| \sigma(v-w) - \sigma(\widetilde{v} - \widetilde{w})\right|
 \\ &\leq C \left( \langle v \rangle^{\gamma} + \langle w \rangle^{\gamma} \right) \left( |r - \widetilde{r}| + |q - \widetilde{q}| \right)
 + C \left( |v - \widetilde{v} | + |w - \widetilde{w}| \right)
 \end{align*}
 and hence
 \begin{align*}
  & (  \langle v \rangle + \langle w \rangle + \langle \widetilde{v} \rangle + \langle \widetilde{w} \rangle ) | \sigma(v - w)\psi(r-q) - \sigma(\widetilde{v} - \widetilde{w})\psi(\widetilde{r} - \widetilde{q})| 
 \\ &\leq C \left(  \langle v \rangle^{1+\gamma} + \langle w \rangle^{1+\gamma} + \langle \widetilde{v} \rangle^{1 + \gamma} + \langle \widetilde{w} \rangle^{1+\gamma} \right)\left( |r - \widetilde{r}| + |q - \widetilde{q}| + |v - \widetilde{v} | + |w - \widetilde{w}| \right)
 \\ &\leq C \left(   \langle w \rangle^{1+\gamma} +  \langle \widetilde{w} \rangle^{1+\gamma} \right)\left( |r - \widetilde{r}| + |v - \widetilde{v} | \right)
 +  C \left(   \langle v \rangle^{1+\gamma} +  \langle \widetilde{v} \rangle^{1+\gamma} \right)\left( |q - \widetilde{q}| + |w - \widetilde{w} | \right)
\\ &\ \ \ +  C \left(   \langle v \rangle^{1+\gamma} +  \langle \widetilde{v} \rangle^{1+\gamma} \right)\left( |r - \widetilde{r}| + |v - \widetilde{v} | \right)
 +  C \left(   \langle w \rangle^{1+\gamma} +  \langle \widetilde{w} \rangle^{1+\gamma} \right)\left( |q - \widetilde{q}| + |w - \widetilde{w} | \right).
 \end{align*}
 Hence using that $H_s^0, H_s^1 \in \mathcal{H}(\mu_s,\nu_s)$ we obtain
\begin{align*}
  &\ \int \limits_{\R^{8d}}(  \langle v \rangle + \langle w \rangle + \langle \widetilde{v} \rangle + \langle \widetilde{w} \rangle ) | \sigma(v - w)\psi(r-q) - \sigma(\widetilde{v} - \widetilde{w})\psi(\widetilde{r} - \widetilde{q})| dH_s^0 dH_s^1
  \\ &\leq C \left( \| \mu_s \|_{1+\gamma} + \| \nu_s \|_{1+\gamma}\right) \int \limits_{\R^{4d}}\left( |r - \widetilde{r}| + |v - \widetilde{v}| \right) dH_s(r,v;\widetilde{r},\widetilde{v})
 \\ & \ \ \ + C \int \limits_{\R^{4d}}\left( \langle v \rangle^{1+\gamma} + \langle \widetilde{v} \rangle^{1+\gamma}\right)\left( |r - \widetilde{r}| + |v - \widetilde{v}| \right) dH_s(r,v;\widetilde{r},\widetilde{v})
 \\ &\leq C \mathcal{C}_{\gamma}(T, \mu+\nu, \delta) d_s(\mu_s,\nu_s) 
 \\ &\ \ \ +  C \mathcal{C}_{\gamma}(T, \mu+\nu, \delta) \int \limits_{\R^{4d}}\left( \langle v \rangle^{1+\gamma} + \langle \widetilde{v} \rangle^{1+\gamma}\right)  |(r,v) - (\widetilde{r}, \widetilde{v})|_s dH_s(r,v;\widetilde{r},\widetilde{v})
 \\ &\leq C \mathcal{C}_{\gamma}(T, \mu+\nu, \delta)d_s(\mu_s,\nu_s) (1 + |\ln(d_s(\mu_s,\nu_s))|) d_s(\mu_s,\nu_s),
 \end{align*}
 where we have used $ |r - \widetilde{r}| + |v - \widetilde{v}|  \leq (1+T) |(r,v) - (\widetilde{r}, \widetilde{v})|_s$, \eqref{CS:02} and similar arguments
 to the proof of Theorem \ref{TH:03} (see also \cite{FRS18b} and \cite{FM09}) to obtain
 \begin{align*}
  &\  \int \limits_{\R^{4d}}\left( \langle v \rangle^{1+\gamma} + \langle \widetilde{v}\rangle^{1+\gamma}\right) |(r,v) - (\widetilde{r},\widetilde{v})|_s  dH_s(r,v;\widetilde{r}, \widetilde{v})
 \\ &\leq C \mathcal{C}_{\gamma}(T, \mu+\nu, \delta)d_s(\mu_s,\nu_s)(1 + |\ln(d_s(\mu_s,\nu_s))|).
 \end{align*}
 For the second term in $\Lambda$ we use
 \begin{align*}
  |(r,w) - (\widetilde{r},\widetilde{w})|_s &\leq  |r-\widetilde{r}| + (1+s)|w - \widetilde{w}|
  \\ &\leq (1+T)|(r,v) - (\widetilde{r},\widetilde{v})|_s + (1+T)|(q,w) - (\widetilde{q},\widetilde{w})|_s
 \end{align*}
 to obtain
 \begin{align*}
  &\ \int \limits_{\R^{8d}}\left( |(r,w) - (\widetilde{r}, \widetilde{w})|_s + |(r,v) - (\widetilde{r},\widetilde{v})|_s \right) \min\{ \sigma(v - w), \sigma(\widetilde{v} - \widetilde{w})\} dH_s^0 dH_s^1
  \\ &\leq C \int \limits_{\R^{8d}}\left( |(r,v) - (\widetilde{r},\widetilde{v})|_s + |(q,w) - (\widetilde{q},\widetilde{w})|_s \right)\min\{ \sigma(v - w), \sigma(\widetilde{v} - \widetilde{w})\} dH_s^0 dH_s^1
  \\ &\leq C( \| \mu_s \|_{1+\gamma} + \| \nu_s \|_{1+\gamma}) \int \limits_{\R^{4d}} \langle v \rangle^{\gamma} |(r,v) - (\widetilde{r},\widetilde{v})|_s  dH_s(r,v;\widetilde{r}, \widetilde{v})
 \\ &\leq C \mathcal{C}_{\gamma}(T,\mu+\nu,\delta) d_s(\mu_s,\nu_s)(1 + |\ln(d_s(\mu_s,\nu_s))|).
 \end{align*}
 Applying the general coupling inequality and then above estimates proves the assertion.
\end{proof}
\begin{Remark}
 Using again Lemma \ref{UNIQ:LEMMA02} from the Appendix we may deduce from above estimate uniqueness and stability with respect to the Wasserstein metric.
\end{Remark}

\section{Appendix}

\subsection{Proof of Lemma \ref{LEMMA:03}}
 \textit{(a)} Applying the It\^{o} formula we obtain, for $g \in C_c^1(\R^{2d})$,
 \[
  g(R(t),V(t)) - g(R(0),V(0)) - \int \limits_{0}^{t}(A(\mu_s)g)(R(s),V(s))ds = M_g(t), \ \ t \geq 0
 \]
 where $(M_g(t))_{t \geq 0}$ is a local martingale. It suffices to show that $(M_g(t))_{t \geq 0}$ is, indeed, a martingale.
 For each $g \in C_c^1(\R^{2d})$ we find $C > 0$ with
 \begin{align*}
  |A(\mu_s)g(r,v)| \leq C \int \limits_{\R^{2d}}\langle w \rangle^{\gamma}d\mu_s(q,w) \langle v \rangle^{\gamma} = C \| \mu_s \|_{\gamma} \langle v \rangle^{\gamma}.
 \end{align*}
 This implies that
 \begin{align*}
  \E( \sup \limits_{s \in [0,t]} |M_g(t)| ) &\leq 2 \| g \|_{\infty} + \int \limits_{0}^{t} \E( |(A(\mu_s)g)(R(s),V(s))| ) ds
  \\ &\leq 2 \| g \|_{\infty} + C \int \limits_{0}^{t} \| \mu_s \|_{\gamma} \E( \langle V(s) \rangle^{\gamma}) ds 
  \\ &\leq 2 \| g \|_{\infty} + t \sup \limits_{s \in [0,t]} \| \mu_s \|_{\gamma}^2 < \infty,
 \end{align*}
 i.e. $(M_g(t))_{t \geq 0}$ is a martingale (see e.g. \cite[Theorem 46, p.36]{P05}).
 \\ \textit{(b)} Let $(q_t,w_t)$ be a measurable process defined on $([0,1], \mathcal{B}([0,1]), d\eta)$ such that $(q_t,w_t)$ has law $\mu_t$, for all $t \geq 0$,
 where $\mu_t$ denotes the time-marginal of $\mu$. Using \cite[Theorem A.1]{HK90} gives the 
 existence of a weak solution $(R,V)$ to \eqref{EQ:07} such that $(R,V)$ has law $\mu$.

\subsection{Proof of Lemma \ref{LEMMA:00}}
 By the mean-value Theorem we get
 \begin{align*}
  | v_j + u |^{2p} &= ( |v_j|^2 + |u|^2 + 2 v_j \cdot u )^p
  \\ &= (|v_j|^2 + |u|^2)^p + 2p (|v_j|^2 + |u|^2)^{p-1} (v_j \cdot u) 
  \\ &\ \ \ \ + 4 p(p-1)(v_j \cdot u)^2 \int \limits_{0}^{1}(1-t)\left( |v_j|^2 + |u|^2 + 2t (v_j \cdot u)\right)^{p-2}dt
 \end{align*}
 For the last integral we get by $2|v_j||u| \leq |v_j|^2 + |u|^2$ and $(a+b)^{q} \leq 2^{q}(a^q + b^q)$ for $q \geq 0$ and $a,b \geq 0$
 \begin{align*}
  &\ \left| 4 p(p-1)(v_j \cdot u)^2 \int \limits_{0}^{1}(1-t)\left( |v_j|^2 + |u|^2 + 2t (v_j \cdot u)\right)^{p-2}dt \right|
  \\ &\leq 4 p(p-1) (|v_j|^2 + |u|^2 + 2 |v_j| |u| )^{p-2}|v_j|^2 |u|^2 
  \\ &\leq p(p-1)2^{p} (|v_j|^2 + |u|^2)^{p-2} |v_j|^2 |u|^2 
  \\ &\leq p(p-1)2^{2p-2}\left( |v_j|^{2p-2}|u|^2 + |v_j|^2 |u|^{2p-4} \right)
  \\ &\leq p(p-1)2^{2p-2}\langle u \rangle^{2p} \left( |v_j|^{2p-2} + |v_j|^2 \right).
 \end{align*}
 Let $k_p = \lfloor \frac{p+1}{2} \rfloor$ where $\lfloor x \rfloor \in \Z$ is defined by $\lfloor x \rfloor \leq x < \lfloor x \rfloor +1$,
 set $\binom{p}{l} = \frac{p(p-1)\cdots (p-l-1)}{l!}$, for $l \geq 1$, and $\binom{p}{0} = 1$.
 Then we obtain by the fractional binomial expansion (see e.g. \cite[Lemma 3.1]{LM12})
 \begin{align*}
  (|v_j|^2 + |u|^2)^p &\leq |u|^{2p} + |v_j|^{2p} + \sum \limits_{l=1}^{k_p}\binom{p}{l}\left( |v_j|^{2l}|u|^{2p-2l} + |v_j|^{2p-2l}|u|^{2l}\right)
  \\ &\leq \langle u \rangle^{2p} + |v_j|^{2p} + \langle u \rangle^{2p}\sum \limits_{l=1}^{k_p}\binom{p}{l}\left( |v_j|^{2l} + |v_j|^{2p-2l}\right),
 \end{align*}
 where we have used $k_p \leq p$. Using the symmetry of $a$ we have $\int_{\R^d}(v_j \cdot u)a(u)du = 0$ and hence obtain
 \begin{align*}
  &\ \int \limits_{\R^d}\left( |v_j + u|^{2p} - |v_k|^{2p}\right)a(u)dvu
  \\ &\leq \int \limits_{\R^d}\left( (|v_j|^2 + |u|^2)^p - |v_k|^{2p}\right)a(u)du + p(p-1)2^{2p-2}\lambda_{2p} \left( |v_j|^{2p-2} + |v_j|^2 \right)
  \\ &\leq |v_j|^{2p} - |v_k|^{2p} + \lambda_{2p} + \lambda_{2p} \sum \limits_{l=1}^{k_p}\binom{p}{l}\left( |v_j|^{2l} + |v_j|^{2p-2l}\right)
  \\ &\ \ \ + p(p-1)2^{2p-2}\lambda_{2p} \left( |v_j|^{2p-2} + |v_j|^2 \right)
  \\ &\leq |v_j|^{2p} - |v_k|^{2p} + \lambda_{2p} + \lambda_{2p}\left( p(p-1)2^{2p-2} + \sum \limits_{l=1}^{k_p}\binom{p}{l} \right)\left( \langle v_j \rangle^{2k_p} + \langle v_j \rangle^{2p-2}\right)
  \\ &\leq |v_j|^{2p} - |v_k|^{2p} + \lambda_{2p} + \lambda_{2p}2^{3p-1} \left( \langle v_j \rangle^{2k_p} + \langle v_j \rangle^{2p-2}\right),
 \end{align*}
 where we have used $\sum_{l=1}^{k_p}\binom{p}{l} \leq 2^p \leq 2^{3p-2}$ and $p(p-1) \leq 2^{p}$ to obtain
 \begin{align*}
   p(p-1)2^{2p-2} + \sum \limits_{l=1}^{k_p}\binom{p}{l} \leq 2^{3p-2} + 2^p \leq 2^{3p-1}.
 \end{align*}
 By symmetry we obtain
 \begin{align}\label{EQ:37}
  \sum \limits_{k,j=1}^{N}\psi(r_k - r_j)\sigma(v_k - v_j)\left( |v_j|^{2p}  - |v_k|^{2p}\right) = 0
 \end{align}
 and hence
  \begin{align*}
  &\ \frac{1}{N^2}\sum \limits_{k,j=1}^{N}\psi(r_k-r_j)\sigma(v_k - v_j)\int \limits_{\R^{d}}\left( | v_j + u |^{2p} - | v_k|^{2p}\right)a(u)du
  \\ &\leq \lambda_{2p}2^{3p-1}\frac{C}{N^2}\sum \limits_{k,j=1}^{N}\left( \langle v_k \rangle^{\gamma} + \langle v_j \rangle^{\gamma}\right)\left( \langle v_j \rangle^{2k_p} + \langle v_j \rangle^{2p-2}\right)
  \\ &= \lambda_{2p}2^{3p-1}\frac{C}{N^2}\sum \limits_{k,j=1}^{N}\left( \langle v_k \rangle^{\gamma}\langle v_j \rangle^{2k_p} + \langle v_k \rangle^{\gamma}\langle v_j \rangle^{2p-2} + \langle v_j \rangle^{2k_p + \gamma} + \langle v_j \rangle^{2p-2 + \gamma} \right).
 \end{align*}
 Since $k_p \leq p-1$ we obtain from the Young inequality
 \begin{align*}
 \langle v_k \rangle^{\gamma}\langle v_j \rangle^{2k_p} \leq \langle v_k \rangle^{\gamma} \langle v_j \rangle^{2p-2} 
  \leq \frac{\gamma}{2p-2+\gamma}\langle v_k \rangle^{2p-2+\gamma} + \frac{2p-2}{2p-2+\gamma}\langle v_j \rangle^{2p-2+\gamma}.
 \end{align*}
 Next by $2k_p + \gamma \leq 2p-2+\gamma$ we obtain $\langle v_j \rangle^{2k_p + \gamma} \leq \langle v_j \rangle^{2p-2 + \gamma}$.
 Putting all estimates together we deduce the assertion.

\subsection{Some variants of the Gronwall lemma}
We need the following generalization of the Gronwall inequality (see \cite[Lemma 5.2.1, p. 89]{C95}) for a proof).
\begin{Lemma}\label{UNIQ:LEMMA02}
 Let $\rho$ be a nonnegative bounded function on $[0,T]$, $a \in [0, \infty)$ and $g$ be a strictly positive and non-decreasing function on $(0,\infty)$.
 Suppose that $\int_{0}^{1}\frac{dx}{g(x)} = \infty$ and 
 \[
  \rho(t) \leq a + \int \limits_{0}^{t}g(\rho(s))ds, \ \ t \in [0,T].
 \]
 Then
 \begin{enumerate}
  \item[(a)] If $a = 0$, then $\rho(t) = 0$ for all $t \in [0,T]$.
  \item[(b)] If $a > 0$, then $G(a) - G(\rho(t)) \leq t$ where $G(x) = \int_{x}^{1}\frac{dy}{g(y)}$.
 \end{enumerate}
\end{Lemma}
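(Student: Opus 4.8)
The plan is to establish part (b) first via an Osgood-type comparison argument and then to deduce part (a) as a limiting case, letting the additive constant tend to $0$ and invoking the divergence $\int_{0}^{1}\frac{dx}{g(x)} = \infty$. As preliminary bookkeeping I would extend $g$ to $0$ by setting $g(0) := \lim_{x \downarrow 0} g(x) \in [0,\infty)$ (the limit exists by monotonicity), so that $s \mapsto g(\rho(s))$ is measurable and bounded by $g(\sup_{[0,T]} \rho) < \infty$; consequently $v(t) := a + \int_{0}^{t} g(\rho(s))\,ds$ is Lipschitz (hence absolutely continuous) and non-decreasing with $v'(t) = g(\rho(t))$ for a.e.\ $t$. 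I would also record that $G(x) = \int_{x}^{1}\frac{dy}{g(y)}$ is finite for every $x > 0$ (on $[x,1]$ one has $g \ge g(x) > 0$ when $x<1$, and $G(x) \le 0$ when $x \ge 1$), is non-increasing, and is $C^1$ on $(0,\infty)$ with $G'(x) = -1/g(x)$.

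For part (b), with $a > 0$, I would use that $a \le v(t)$ forces $G$ to be $C^1$ — hence Lipschitz — on a compact interval containing the range of $v$, so that $G \circ v$ is absolutely continuous and the chain rule applies: for a.e.\ $t$,
\[
  \frac{d}{dt} G(v(t)) = G'(v(t))\,v'(t) = -\frac{g(\rho(t))}{g(v(t))} \ge -1,
\]
where the last step uses $\rho(t) \le v(t)$ and monotonicity of $g$. Integrating over $[0,t]$ and using $v(0) = a$ gives $G(a) - G(v(t)) \le t$; then $\rho(t) \le v(t)$ together with monotonicity of $G$ yields $G(\rho(t)) \ge G(v(t))$, so $G(a) - G(\rho(t)) \le t$, as claimed. (Equivalently, one may run the substitution $y = v(s)$ directly in $\int_{0}^{t} v'(s)/g(v(s))\,ds = G(a) - G(v(t))$.)

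For part (a), with $a = 0$, I would note that for every $\varepsilon > 0$ the hypothesis gives $\rho(t) \le \varepsilon + \int_{0}^{t} g(\rho(s))\,ds$, so part (b) applied with constant $\varepsilon$ yields $G(\rho(t)) \ge G(\varepsilon) - t$ for all $t \in [0,T]$. Letting $\varepsilon \downarrow 0$, monotone convergence gives $G(\varepsilon) = \int_{\varepsilon}^{1}\frac{dy}{g(y)} \to \int_{0}^{1}\frac{dy}{g(y)} = \infty$, hence $G(\rho(t)) = \infty$ for each $t$; since $G$ is finite on $(0,\infty)$ and $\rho(t) \ge 0$, this forces $\rho(t) = 0$.

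The only genuinely delicate points are the measure-theoretic bookkeeping — measurability of $g \circ \rho$, absolute continuity of $v$ and of $G \circ v$, and the validity of the chain rule and the fundamental theorem of calculus for absolutely continuous functions — together with the convention $g(0) = \lim_{x \downarrow 0} g(x)$ that makes the integrand meaningful where $\rho$ may vanish. None of these is a real obstacle; the conceptual core is simply the one-line differential inequality $\frac{d}{dt} G(v(t)) \ge -1$ and the fact that $\int_{0}^{1}\frac{dx}{g(x)} = \infty$ pushes $G$ to $+\infty$ at the origin.
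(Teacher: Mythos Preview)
Your argument is correct and is the standard Osgood comparison proof: set $v(t)=a+\int_0^t g(\rho(s))\,ds$, differentiate $G\circ v$, use $\rho\le v$ and monotonicity of $g$ to get $(G\circ v)'\ge -1$, and then integrate; part (a) follows by replacing $a$ with $\varepsilon\downarrow 0$ and using $G(0+)=+\infty$. The bookkeeping you flag (measurability of $g\circ\rho$, absolute continuity of $v$ and $G\circ v$, chain rule) is handled correctly.

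As for comparison with the paper: the paper does not actually prove this lemma. It is stated in the appendix with a reference to \cite[Lemma 5.2.1, p.~89]{C95} for a proof, so there is no in-paper argument to compare against. Your proof is exactly the classical one that would be found in such a reference, so nothing is lost or gained relative to the paper's (absent) treatment.
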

The following nonlinear generalization of the Gronwall lemma is a particular case of the Bihari-LaSalle inequality.
\begin{Lemma}\label{LASALLE}
  Let $f: \R_+ \longrightarrow \R_+$ be measurable and suppose that 
 \[
  f(t) \leq f(0) + K \int \limits_{0}^{t}f(s)^{1-\alpha} ds, \ \ t \geq 0
 \]
 for some $K \geq 0$ and $\alpha \in (0,1)$. Then for any $t \geq 0$
 \[
  f(t) \leq \left( f(0)^{\alpha} + \alpha K t \right)^{1 / \alpha} \leq 2^{1/ \alpha-1}f(0) + \frac{\left( 2 \alpha K  \right)^{1 / \alpha} }{2} t^{1 / \alpha}.
 \]
\end{Lemma}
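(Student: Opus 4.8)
The plan is to reduce the integral inequality to a differential inequality for a majorant and then integrate it explicitly. First I would set
\[
 F(t) := f(0) + K \int \limits_{0}^{t} f(s)^{1-\alpha}\, ds, \qquad t \geq 0,
\]
which is well defined because the hypothesis forces $f^{1-\alpha} \in L^1_{\mathrm{loc}}(\R_+)$. By construction $F$ is absolutely continuous, non-decreasing, $F(0) = f(0)$, and $f(t) \leq F(t)$ for every $t$. Since $\alpha \in (0,1)$, the map $x \mapsto x^{1-\alpha}$ is non-decreasing on $\R_+$, so $f(t) \leq F(t)$ together with the Lebesgue differentiation theorem gives $F'(t) = K f(t)^{1-\alpha} \leq K F(t)^{1-\alpha}$ for a.e.\ $t$.

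Next I would first treat the case $f(0) > 0$, in which $F(t) \geq f(0) > 0$ on every compact interval, so that $x \mapsto x^{\alpha}$ is Lipschitz on the range of $F$ over $[0,T]$ and hence $t \mapsto F(t)^{\alpha}$ is absolutely continuous with
\[
 \frac{d}{dt} F(t)^{\alpha} = \alpha F(t)^{\alpha - 1} F'(t) \leq \alpha F(t)^{\alpha-1}\, K F(t)^{1-\alpha} = \alpha K
\]
for a.e.\ $t$. Integrating over $[0,t]$ yields $F(t)^{\alpha} \leq F(0)^{\alpha} + \alpha K t = f(0)^{\alpha} + \alpha K t$, hence $f(t) \leq F(t) \leq \left( f(0)^{\alpha} + \alpha K t \right)^{1/\alpha}$. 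For the general case $f(0) \geq 0$ I would note that $f(t) \leq (f(0) + \varepsilon) + K \int_{0}^{t} f(s)^{1-\alpha}\, ds$ for every $\varepsilon > 0$, apply the previous step with $f(0)$ replaced by $f(0) + \varepsilon$, and then let $\varepsilon \downarrow 0$; the subcase $K = 0$ is trivial since then $f(t) \le f(0)$.

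Finally, for the second, more explicit bound I would invoke convexity of $x \mapsto x^{1/\alpha}$ on $\R_+$ (valid because $1/\alpha \geq 1$), which gives $(a+b)^{1/\alpha} \leq 2^{1/\alpha - 1}(a^{1/\alpha} + b^{1/\alpha})$ for all $a,b \geq 0$; taking $a = f(0)^{\alpha}$ and $b = \alpha K t$ produces
\[
 \left( f(0)^{\alpha} + \alpha K t \right)^{1/\alpha} \leq 2^{1/\alpha - 1} f(0) + 2^{1/\alpha-1}(\alpha K)^{1/\alpha} t^{1/\alpha} = 2^{1/\alpha - 1} f(0) + \frac{(2\alpha K)^{1/\alpha}}{2}\, t^{1/\alpha}.
\]
The only genuinely delicate point is the regularity bookkeeping: since $f$ is merely measurable one must check that $F^{\alpha}$ is absolutely continuous so that the chain rule for its a.e.\ derivative is legitimate, which is precisely why I keep $F$ bounded away from zero by the $\varepsilon$-perturbation before removing it. Everything else is a direct computation, and of course the statement is also a special case of the classical Bihari--LaSalle inequality.
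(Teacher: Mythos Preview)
Your proof is correct and is the standard derivation of this special case of the Bihari--LaSalle inequality. The paper itself does not supply a proof: it merely states the lemma and remarks that it ``is a particular case of the Bihari--LaSalle inequality.'' So there is nothing to compare at the level of argument; you have simply filled in what the paper leaves to the literature, and you do so cleanly, including the $\varepsilon$-perturbation to justify absolute continuity of $F^{\alpha}$ when $f(0)=0$ and the convexity step for the second displayed bound.

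One very minor quibble: your claim that ``the hypothesis forces $f^{1-\alpha}\in L^1_{\mathrm{loc}}(\R_+)$'' is not literally true, since an inequality with an infinite right-hand side is vacuously satisfied. In the paper's applications $f$ is always locally bounded (it is a moment of a process), so local integrability is automatic; if you want the lemma to stand on its own you should either add this as a hypothesis or note that the conclusion is only claimed on the maximal interval where the integral is finite. This does not affect correctness for the intended use.
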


\subsection{Some localization result}
Let $(E,\rho)$ be a complete, separable metric space.
Let $A \subset C_b(E) \times C(E)$ be a (multi-valued) operator such that there exists $1 \leq \psi \in C(E)$ with
\begin{align}\label{APPENDIX:00}
  |g| \leq K_f \psi, \ \ \forall (f,g) \in A
\end{align}
for some $K_f > 0$. Set $\mathcal{P}_{\psi} := \left\{ \mu \in \mathcal{P}(E) \ | \ \int_E \psi(x) d\mu(x) < \infty \right\}$.
Here and below $D(\R_+;E)$ denotes the Skorokhod space and $x$ the canonical process on $D(\R_+;E)$.
\begin{Definition}
 Let $\mu \in \mathcal{P}_{\psi}$. A solution to the martingale problem $(A, \mu)$ is a probability measure $\Pr_{\mu}$
 on $D(\R_+;E)$ such that
 \begin{enumerate}
  \item[(a)] $\Pr_{\mu}(x(0) \in A) = \mu(A)$ for all $A \in \mathcal{B}(E)$.
  \item[(b)] $\int_0^T \E_{\mu}( \psi(x(t)) ) dt < \infty$ for all $T > 0$.
  \item[(c)] For all $(f,g) \in A$
  \begin{align}\label{APPENDIX:01}
   f(x(t)) - f(x(0)) - \int \limits_{0}^{t}g(x(s))ds, \ \ t \geq 0
  \end{align}
  is a martingale w.r.t. $\Pr_{\mu}$.
 \end{enumerate}
\end{Definition}
When working with martingale problems the use of localization techniques such as \cite[Theorem 6.3, Corollary 6.4]{EK86} is essential.
However, the statements therein require that $A \subset C_b(E) \times B(E)$, i.e. $\psi = 1$.
Below we give one possible extension.
\begin{Theorem}\label{APPENDIX:TH00}
 Let $A \subset C_b(E) \times C(E)$ satisfy \eqref{APPENDIX:00} and $A_m \subset C_b(E) \times C(E)$ be such that
 $|g_m| \leq K_f \psi$ holds for $(f,g_m) \in A_m$ with a constant $K_f > 0$ independent of $m \geq 1$. 
 Suppose that there exists $\mu \in \mathcal{P}_{\psi}$ such that the following conditions hold:
 \begin{enumerate}
  \item[(i)] There exist open sets $(U_m)_{m \geq 1}$ with $\overline{U}_m \subset U_{m+1}$, $\bigcup_{m \geq 1}U_m = E$ and
  \[
   \left \{ (f, \1_{U_m}g) \ | \ (f,g) \in A_m \right\} = \left\{ (f, \1_{U_m}g) \ | \ (f,g) \in A \right\}, \ \ m \geq 1.
  \]
  Moreover $\1_{U_m}\psi$ is bounded for any $m \geq 1$.
  \item[(ii)] The martingale problem $(A_m, \rho)$ has for each $\rho \in \mathcal{P}(E)$ and each $m \geq 1$ a unique solution.
  \item[(iii)] We have 
  \[
   \lim \limits_{k \to \infty}\sup \limits_{m \geq k}\Pr_{\mu}^{m}\left(\tau_k \leq T\right) = 0, \ \ \forall T > 0
  \]
  where $\Pr_{\mu}^m$ is the unique solution to the martingale problem $(A_m, \mu)$ and
  \[
   \tau_k = \inf \{ t > 0 \ | \ x(t) \not \in U_k \text{ or } x(t-) \not \in U_k \}
  \]
  is a stopping time on $D(\R_+;E)$.
  \item[(iv)] There exists $p > 1$ such that for all $T > 0$ there exists $C(p,T) > 0$ satisfying
  \[
   \sup \limits_{m \geq 1}\sup \limits_{t \in [0,T]} \E_{\mu}^m\left( \psi(x(t))^p \right) \leq C(p,T),
  \]
  where $\E_{\mu}^m$ denotes the expectation w.r.t. $\Pr_{\mu}^m$.
 \end{enumerate}
 Then there exists a unique solution $\Pr_{\mu}$ to the martingale problem $(A,\mu)$. This solution satisfies
 \[
  \sup \limits_{t \in [0,T]} \E_{\mu}( \psi(x(t))^p) \leq C(p,T), \ \ T > 0.
 \]
\end{Theorem}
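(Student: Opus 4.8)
The statement is a version of the localization theorem for martingale problems of Ethier and Kurtz \cite[Section 4.6]{EK86}; the two features not covered by the classical formulation are the unbounded function $\psi$, which I would handle by first truncating to the sets $U_m$, and the uniform moment bound (iv), which is needed to upgrade the local martingales produced by the truncation to genuine martingales. First, I record the key consequence of (i): given $(f,\tilde g_m)\in A_m$ and $m'\ge m$, there is $(f,g_{m'})\in A_{m'}$ with $\1_{U_m}g_{m'}=\1_{U_m}\tilde g_m$, since both identities in (i) force $\1_{U_m}\tilde g_m=\1_{U_m}g=\1_{U_m}g_{m'}$ for a suitable $(f,g)\in A$ (here $U_m\subset U_{m'}$ is used). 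Because $\1_{U_m}\psi$ is bounded, all these functions are bounded on $U_m$, so the martingale problem for $A_m$ stopped at $\tau_m$ is a \emph{bounded} martingale problem; its well-posedness therefore follows from the well-posedness of $(A_m,\rho)$ for all $\rho\in\mathcal{P}(E)$ assumed in (ii), and — since the canonical process stays in $U_m$ before $\tau_m$ — this stopped problem only involves $\1_{U_m}g_m$. Using this, the law of $x(\cdot\wedge\tau_m)$ under $\Pr_\mu^{m'}$ solves the $\tau_m$-stopped $A_m$-problem for every $m'\ge m$, hence coincides with its law under $\Pr_\mu^m$; denote this common law by $R_m$. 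Writing $T_m\colon x\mapsto x(\cdot\wedge\tau_m)$ for the corresponding measurable map on $D(\R_+;E)$, the identity $T_m\circ T_{m'}=T_m$ ($m\le m'$) gives the consistency relations $R_m=R_{m'}\circ T_m^{-1}$.

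Next, following the patching construction of \cite[Section 4.6]{EK86}, the consistent family $(R_m)_{m\ge1}$ on the Polish space $D(\R_+;E)$ determines a probability measure $\Pr_\mu$ on $\bigvee_m\sigma(T_m)$ with $\Pr_\mu\circ T_m^{-1}=R_m$, provided $\tau_m\uparrow\infty$ $\Pr_\mu$-a.s. To obtain the latter, note that for $k\le m$ the law of $\tau_k$ under $R_m$ equals its law under $\Pr_\mu^m$, so $\Pr_\mu(\tau_k\le T)=\Pr_\mu^m(\tau_k\le T)\le\sup_{m\ge k}\Pr_\mu^m(\tau_k\le T)$; since the events $\{\tau_k\le T\}$ decrease to $\{\sup_k\tau_k\le T\}$, condition (iii) yields $\Pr_\mu(\sup_k\tau_k\le T)=0$ for every $T>0$, i.e. $\tau_m\uparrow\infty$ a.s. Hence $\bigvee_m\sigma(T_m)$ generates, modulo $\Pr_\mu$-null sets, the full Borel $\sigma$-field of $D(\R_+;E)$ and $\Pr_\mu\in\mathcal{P}(D(\R_+;E))$ is well defined.

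Then I would check that $\Pr_\mu$ solves $(A,\mu)$ and satisfies the claimed bound. Property (a) holds because $x(0)$ has law $\mu$ under every $\Pr_\mu^m$. For $(f,g)\in A$ and each $m$, the process $f(x(t\wedge\tau_m))-f(x(0))-\int_0^{t\wedge\tau_m}g(x(s))\,ds$ equals the $\Pr_\mu^m$-martingale coming from the matching $(f,g_m)\in A_m$ (the integrands agree on $U_m$, where the path lives before $\tau_m$), hence is a $\Pr_\mu$-martingale; letting $m\to\infty$, $M_t^f:=f(x(t))-f(x(0))-\int_0^t g(x(s))\,ds$ is a $\Pr_\mu$-local martingale. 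This is where (iv) enters: by Jensen's inequality, monotone convergence and the $T_m^{-1}$-consistency one gets $\sup_{t\in[0,T]}\E_\mu(\psi(x(t))^p)\le C(p,T)$, which is the claimed a priori bound and, after integrating in $t$, also property (b); moreover $|M_{t\wedge\tau_m}^f|\le 2\|f\|_\infty+K_f\int_0^t\psi(x(s))\,ds$, and $\int_0^t\psi(x(s))\,ds\in L^p(\Pr_\mu)$ uniformly in $m$ by Hölder's inequality in $s$ together with the moment bound — this is the point where $p>1$ is essential — so $(M_{t\wedge\tau_m}^f)_m$ is uniformly integrable, converges to $M_t^f$ in $L^1$, and $M^f$ is a genuine martingale, which is (c).

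Finally, for uniqueness let $\widetilde\Pr_\mu$ be any solution of $(A,\mu)$. Stopping at $\tau_m$ and arguing as before (using (i) and that the path lies in $U_m$ before $\tau_m$), the law of $x(\cdot\wedge\tau_m)$ under $\widetilde\Pr_\mu$ solves the bounded $\tau_m$-stopped $A_m$-problem, hence equals $R_m$; in particular $\widetilde\Pr_\mu(\tau_m\le T)=\Pr_\mu^m(\tau_m\le T)\to0$ by (iii) (taking $k=m$), so $\tau_m\uparrow\infty$ $\widetilde\Pr_\mu$-a.s. as well, and since $\widetilde\Pr_\mu$ and $\Pr_\mu$ agree on $\sigma(T_m)$ for all $m$ they coincide on $\bigvee_m\sigma(T_m)$, which is everything. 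I expect the main obstacle to lie in the first two steps: establishing that the $\tau_m$-stopped martingale problem is well posed — so that the truncated laws $R_m$ are canonically and consistently defined — and that the resulting consistent family genuinely patches into a probability measure on the Skorokhod space (the non-explosion input (iii) being precisely what makes the patching non-vacuous). The moment bound (iv), in particular the strict inequality $p>1$, is technically indispensable but conceptually routine: it converts the truncation-induced local martingales into true martingales and propagates the bound to the limit.
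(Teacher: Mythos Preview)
Your argument is correct and follows the classical Ethier--Kurtz localization scheme: establish consistency of the $\tau_m$-stopped laws $R_m$, patch them into a single measure $\Pr_\mu$ using non-explosion (iii), and then upgrade local martingales to martingales via the uniform $L^p$-bound (iv). The paper takes a genuinely different route. Rather than patching, it shows directly that the sequence $(\Pr_\mu^m)_{m\ge1}$ is Cauchy in the bounded-Lipschitz metric on $\mathcal{P}(D(\R_+;E))$: writing $H^T(x)=H(x(\cdot\wedge T))$, one splits $|\E_\mu^m(H)-\E_\mu^k(H)|$ into a tail-in-$T$ part (controlled by $e^{-T}$ via the explicit Skorokhod metric) and a finite-horizon part, the latter handled using the agreement of $\Pr_\mu^m$ and $\Pr_\mu^k$ on $\{\tau_k>T\}$ together with (iii). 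The limit $\Pr_\mu$ is then the weak limit, and the martingale property is verified by passing to the limit in $\E_\mu^m(H)$ for the test functionals \eqref{APPENDIX:02}, using (iv) and H\"older to control the contribution on $\{\tau_m\le T\}$.

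What each approach buys: your patching argument is conceptually closer to \cite[Section~4.6]{EK86} and avoids any metric computation on Skorokhod space; the paper's approach yields as a free byproduct the weak convergence $\Pr_\mu^m\Rightarrow\Pr_\mu$, and sidesteps the (mildly delicate) question of making the projective-limit step rigorous for the maps $T_m$ on $D(\R_+;E)$. One small point in your write-up: the moment bound is obtained not by monotone convergence of $\psi(x(t\wedge\tau_m))^p$ (there is no monotonicity in $m$), but rather via $\E_\mu(\psi(x(t))^p\1_{\{\tau_m>t\}})=\E_\mu^m(\psi(x(t))^p\1_{\{\tau_m>t\}})\le C(p,T)$ and then monotone convergence in the indicator as $m\to\infty$; this is the same Fatou-type step the paper uses in its Step~3.
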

\begin{Remark}
 In several cases one may take $U_m = \{ x \in E \ | \ \psi(x) < m \}$. 
 In such a case condition (iii) is implied by
 \[
  \lim \limits_{k \to \infty} \sup \limits_{m \geq k} \Pr_{\mu}^m( \sup \limits_{t \in [0,T]} \psi(x(t)) \geq k ) = 0, \ \ \forall T > 0
 \]
 or the stronger condition
 \[
  \sup \limits_{m \geq 1} \E_{\mu}^m\left( \sup \limits_{t \in [0,T]} \psi(x(t)) \right) < \infty, \ \ \forall T > 0.
 \]
\end{Remark}
\begin{proof}
 \textit{Step 1.} Let $n \geq 1$, $0 \leq t_1,\dots, t_n \leq T$ and $H \in C_b(E^n)$. 
 Then (i), (ii) together with \cite[Chapter 4, Theorem 6.1]{EK86} yield
 \[
  \E_{\mu}^m\left( \1_{\tau_k > T} H(x(t_1),\dots, x(t_n)) \right) = \E_{\mu}^k\left( \1_{\tau_k > T}H(x(t_1),\dots, x(t_n)) \right), \ \ 1 \leq k \leq m.
 \]
 \textit{Step 2.} Let us prove that $\Pr_{\mu}^m \longrightarrow \Pr_{\mu}$ weakly in $\mathcal{P}(D(\R_+;E))$.
 
 Recall that the topology on $D(\R_+;E)$ may be obtained from the metric
 \[
  d(x,y) = \inf \limits_{\lambda \in \Lambda} \left( \gamma(\lambda) \vee \int \limits_{0}^{\infty}e^{-u}\sup \limits_{t \geq 0}q(x(t \wedge u), y(\lambda(t)\wedge u)) du \right)
 \]
 where $q := \rho \wedge 1$, $\gamma(\lambda) := \sup \limits_{0 \leq s < t}\left| \log\left(\frac{\lambda(t) - \lambda(s)}{t-s}\right)\right|$ and
 $\Lambda$ is the set of all strictly increasing, Lipschitz continuous functions $\lambda: [0, \infty) \longrightarrow [0,\infty)$ with $\gamma(\lambda) < \infty$
 (see \cite[p.117]{EK86}). For $H: D(\R_+; E) \longrightarrow \R$ let 
 \[
  \Vert H \Vert_{BL} = \Vert H \Vert_{\infty} + \sup \limits_{x \neq y} \frac{|H(x) - H(y)|}{d(x,y)}.
 \]
 Then it suffices to prove that $(\Pr_{\mu}^m)_{m \geq 1} \subset \mathcal{P}(D(\R_+;E))$ is a Cauchy sequence w.r.t. the metric 
 \[
  d_{BL}(P,Q) = \sup \limits_{\Vert H \Vert_{BL} \leq 1} \left| \int \limits_{D(\R_+;E)}H(x)dP(x) - \int \limits_{D(\R_+;E)}H(x)dQ(x)\right|.
 \]
 Take $H$ with $\Vert H \Vert_{BL} \leq 1$, $T > 0$, $1 \leq k < m$ and set $x^T := x(\cdot \wedge T)$, $H^T(x) := H(x^T)$. Then 
 \begin{align*}
  |\E_{\mu}^m(H) - \E_{\mu}^k(H)| &\leq |\E_{\mu}^m(H^T) - \E_{\mu}^m(H)| +  | \E_{\mu}^m(H^T) - \E_{\mu}^k(H^T)|  + |\E_{\mu}^k(H^T) - \E_{\mu}^k(H)|
  \\ &=: I_1 + I_2 + I_3.
 \end{align*}
 Then by Step 1 and $\1_{\tau_m > T} \geq \1_{\tau_k > T}$ we get 
 \begin{align*}
  I_2 &\leq \Vert H \Vert_{\infty}\left( \Pr_{\mu}^m(\tau_m \leq T) + \Pr_{\mu}^k(\tau_k \leq T) \right) + \Vert H \Vert_{\infty} \E_{\mu}^m\left(  \1_{\tau_m > T} - \1_{\tau_k > T} \right)
  \\ &= \Vert H \Vert_{\infty}\left( \Pr_{\mu}^m(\tau_m \leq T) + \Pr_{\mu}^k(\tau_k \leq T) \right) + \Vert H \Vert_{\infty} \left( \Pr_{\mu}^m(\tau_m > T) - \Pr_{\mu}^k(\tau_k > T)\right)
 \end{align*}
 which tends by (iii) clearly to zero. Moreover we have
 \begin{align*}
  I_1 &= \left| \E_{\mu}^m(H^T) - \E_{\mu}^m(H)\right| \leq \E_{\mu}^m(d(x^T,x))
  \\ &\leq \E_{\mu}^m\left( \int \limits_{0}^{\infty}e^{-u}\sup \limits_{t \geq 0}q(x(t \wedge u \wedge T), x(t \wedge u)) du \right) \leq e^{-T}
 \end{align*}
 and likewise $I_3 \leq e^{-T}$ which completes Step 2.
 \\ \textit{Step 3.} Let $\Pr_{\mu}$ be the limit of $\Pr_{\mu}^m$. 
 Using (iv), monotone convergence and the Lemma of Fatou one can show that 
 \[
  \sup \limits_{t \in [0,T]} \E_{\mu}\left( \psi(x(t))^p \right) \leq C(p,T), \ \ T > 0.
 \]
 \textit{Step 4.} Take $g \in C(E)$ such that there exists $K_g > 0$ with $|g| \leq K_g \psi$. We show that
 \[
  \lim \limits_{m \to \infty}\E_{\mu}^m\left( g(x(t)) \right) = \E_{\mu}\left( g(x(t)) \right), \ \ t \in D_{\mu}
 \]
 where $D_{\mu} = \{ t \geq 0 \ | \ \Pr_{\mu}( x(t) = x(t-)) = 1 \}$. Note that $D_{\mu}^c$ is at most countable.
 
 Let $h_k \in C_b(E)$ be such that $\1_{U_k} \leq h_k \leq \1_{U_{k+1}}$, $k \geq 1$. Then for $k < m$
 \begin{align*}
  |\E_{\mu}^m(g(x(t)) - \E_{\mu}(g(x(t))| &\leq | \E_{\mu}^m(h_k(x(t))g(x(t))) - \E_{\mu}(h_k(x(t))g(x(t)))| 
  \\ &\ \ \ + |\E_{\mu}^m( (1 - h_k(x(t)))g(x(t)))| + |\E_{\mu}( (1 - h_k(x(t)))g(x(t)))|
  \\ &= I_1 + I_2 + I_3.
 \end{align*}
 It suffices to show that
 \begin{align*}
  \lim \limits_{m \to \infty} I_1 = 0, \ \ \forall k \geq 1
  \\ \lim \limits_{k \to \infty}\sup \limits_{m \geq k}(I_2 + I_3) = 0.
 \end{align*}
 Concerning $I_1$ the assertion follows by Step 2 and since $x \longmapsto h_k(x(t))g(x(t))$ is bounded and $\Pr_{\mu}$-a.s. continuous on $D(\R_+;E)$ for any $k \geq 1$.
 For the second property we use $\1_{U_k} ( 1 - h_k) = 0$ so that
 \begin{align*}
  I_2 + I_3 &= |\E_{\mu}^m( \1_{\tau_k \leq t} (1 - h_k(x(t)))g(x(t)))| + |\E_{\mu}( \1_{\tau_k \leq t}(1 - h_k(x(t)))g(x(t)))|
  \\ &\leq K_g \E_{\mu}^m( \1_{\tau_k \leq t} \psi(x(t))) + K_g \E_{\mu}( \1_{\tau_k \leq t} \psi(x(t)))
  \\ &\leq K_g (\Pr_{\mu}^m(\tau_k \leq t))^{1 - \frac{1}{p}}\E_{\mu}^m( \psi(x(t))^p)^{\frac{1}{p}} 
     + K_g (\Pr_{\mu}(\tau_k \leq t))^{1 - \frac{1}{p}}\E_{\mu}( \psi(x(t))^p)^{\frac{1}{p}}.
 \end{align*}
 For the first term we can use (iii) and (iv); for the second term this follows from $\Pr_{\mu} \in \mathcal{P}(D(\R_+;E))$.
 \\ \textit{Step 5.} $\Pr_{\mu}$ is a solution for the martingale problem for $(A,\mu)$.
 
 Fix $n \geq 1$, $0 \leq t_1,\dots, t_n\leq s < t$ in $D_{\mu}$, $h_1, \dots, h_n \in C_b(E)$, $(f,g) \in A$ and set 
 \begin{align}\label{APPENDIX:02}
  H := \left( f(x(t)) - f(x(s)) - \int \limits_{s}^{t}g(x(s))ds\right) \prod \limits_{k=1}^{n}h_k(x(t_k)).
 \end{align}
 We have to show that $\E_{\mu}(H) = 0$. First using Steps 3 and 4 together with (iv) and dominated convergence we easily deduce
 \begin{align*}
  \E_{\mu}(H) = \lim \limits_{m \to \infty} \E_{\mu}^m(H) = \lim \limits_{m \to \infty}\E_{\mu}^m( \1_{\tau_m \leq T} H) + \lim \limits_{m \to \infty}\E_{\mu}^m(\1_{\tau_m > T} H)
 \end{align*}
 where $t < T$. We can find a constant $C > 0$ such that
 \begin{align}\label{APPENDIX:03}
  | \E_{\mu}^m( \1_{\tau _m \leq T} H) | \leq C \Pr_{\mu}^m(\tau_m \leq T) + C \Pr_{\mu}^m(\tau_m \leq T)^{1 - \frac{1}{p}} \sup \limits_{t \in [0,T]}\E_{\mu}^m(\psi(x(t))^p)^{\frac{1}{p}}
 \end{align}
 and the right-hand side tends to zero as $m \to \infty$. Since $(f,g) \in A$ we can find by (i) $g_m \in C_b(E)$ such that $(f,g_m) \in A_m$
 and $\1_{U_m}g = \1_{U_m}g_m$. Let $H_m$ be given by \eqref{APPENDIX:02} with $g$ replaced by $g_m$. 
 Then, since $\Pr_{\mu}^m$ is a solution to the martingale problem $(A_m, \mu)$, it follows $\E_{\mu}^m(H_m) = 0$ and hence
 \[
  \E_{\mu}^m(\1_{\tau_m > T}H) = \E_{\mu}^m(\1_{\tau_m > T}H_m) = - \E_{\mu}^m(\1_{\tau_m \leq T} H_m).
 \]
 Since $|g_m| \leq C \psi$ for some $C > 0$ independent of $m$, the latter expression can be estimated in the same way as \eqref{APPENDIX:03}.
 \\ \textit{Step 6.} It remains to show that there exists only one solution to the martingale problem $(A,\mu)$.
 Let $\Pr_{\mu}' \in \mathcal{P}(D(\R_+;E))$ be any solution to the martingale problem $(A, \mu)$.
 Let $n \geq 1$, $0 \leq t_1,\dots, t_n \leq T$ and $H \in C_b(E^n)$.
 Then (ii) implies that
 \[
  \E_{\mu}^m(\1_{\tau_m > T}H(x(t_1), \dots, x(t_n)) ) = \E_{\mu}'( \1_{\tau_m > T} H(x(t_1), \dots, x(t_n)) ).
 \]
 The assertion now follows from the identity
 \begin{align*}
  &\ \E_{\mu}^m(H(x(t_1),\dots, x(t_n))) - \E_{\mu}'(H(x(t_1),\dots, x(t_n))) 
  \\ &= \E_{\mu}^m(\1_{\tau_m \leq T}H(x(t_1),\dots, x(t_n))) - \E_{\mu}'(\1_{\tau_m \leq T}H(x(t_1),\dots, x(t_n))) 
 \end{align*}
 after taking the limit $m \to \infty$.
\end{proof}

\newpage

\begin{footnotesize}

\bibliographystyle{alpha}
\bibliography{Bibliography}

\end{footnotesize}

\end{document}